\renewcommand{\(}{\left(}
\renewcommand{\)}{\right)}
\newcommand{\ol}{\overline}
\newcommand{\e}{\mathrm{e}}
\renewcommand{\i}{\mathrm{i}}
\renewcommand{\i}{{i}}
\renewcommand{\d}{\mathrm{d}}
\newtheorem{RHP}{Riemann-Hilbert problem}
\newtheorem{rem}{Remark}
\newtheorem{Crem}[rem]{Conjectural and non rigorous Remark}
\newtheorem{thrm}{Theorem}
\newtheorem{lemma}[thrm]{Lemma}
\newtheorem{cor}[thrm]{Corollary}
\renewcommand{\P}{\mathbf{P}}
\newcommand{\M}{\mathbf{M}}
\newcommand{\Q}{\mathbf{Q}}
\newcommand{\N}{\mathbf{N}}
\newcommand{\J}{\mathbf{J}}
\newcommand{\1}{\mathbf{1}}
\newcommand{\s}{\pmb{\sigma}}
\newcommand{\Z}{\mathbf{Z}}
\title{On the solution of the Zakharov-Shabat system, which arises in the analysis of the largest real eigenvalue in the real Ginibre ensemble}
\author{A.Minakov,\\ UCLouvain, IRMP, Belgium}%A.~Minakov and A.~Rybkin}
\date{}%\today}
\begin{document}

\maketitle

\begin{abstract}
Let $\lambda_{max}$ be a shifted maximal real eigenvalue of a random $N\times N$ matrix with independent $N(0,1)$ entries (the `real Ginibre matrix') in the $N\to\infty$ limit.

It was shown by Poplavskyi, Tribe, Zaboronski  \cite{PZT} that the limiting distribution of the maximal real eigenvalue has $s\to-\infty$ asymptotics
$$\mathbb{P} [ \lambda_{max} < s ] = \e^{\frac{1}{2\sqrt{2\pi}}\zeta(\frac32)s+\mathcal{O}(1)},$$
where $\zeta$ is the Riemann zeta-function.

This limiting distribution was expressed by Baik, Bothner \cite{BB18} in terms of the solution $q(x)$ of a certain Zakharov-Shabat inverse scattering problem, and the asymptotics was extended 
to the form
$$\mathbb{P} [ \lambda_{max} < s ] = \e^{\frac{1}{2\sqrt{2\pi}}\zeta(\frac32)t}c(1+_\mathcal{O}(1)),\ s\to-\infty.$$

We show that $q(x)$ is a smooth function, which behaves as $\frac{1}{x}$ as $x\to-\infty.$
Second, we show that the error term in the asymptotics is subexponential, i.e. smaller that $\e^{-C|s|}$ for any $C.$
Third, we identify the constant $c$ as a conserved quantity of a certain fast decaying solution $u(x,t)$ of the Korteweg-de Vries equation. This, in principle, gives a way to determine $c$ via the known long-time $t\to+\infty$ asymptotics of $u(x,t).$ We also conjecture a representation for the $c$ in terms of an integral of the Hastings-MacLeod solution of Painlev\'e II equation.
\end{abstract}

\tableofcontents

\section{Introduction} 
For $\gamma\in[0,1],$ define\footnote{The $r$ from \cite{BB18} equals $\i R(k;\gamma).$}
 \begin{equation}\label{R}R(k;\gamma) =  -\sqrt{\gamma} \e^{-k^2/4},\end{equation}
and consider the following Riemann-Hilbert problem (RHP):
\footnote{The $\mathbf{X}$ from \cite{BB18} equals $\e^{\pi\i\s_3/4}\M\e^{-\pi\i\s_3/4},$ where $\s_3=diag[1,-1]$.}

\begin{RHP}\label{RightRHP}
To find a $2\times2$ matrix-valued function $\M = \M(x,t;k;\gamma)$ that satisfies the following properties:
\begin{itemize}
\item analyticity: $\M(x,t;k;\gamma)$ is analytic in $k\in \mathbb{C}\setminus{\mathbb{R}},$
and continuous up to the boundary $k\in\mathbb{R};$
\item jumps:
 $\M_-=\M_+ \J_{\M},$ where
\[\J_\M=\begin{pmatrix}1 & \ol{R(k;\gamma)}\cdot\e^{-2\i \widehat\theta(x,t;k)} \\ -R(k;\gamma)\e^{2\i \widehat\theta(x,t;k)} & 1 -|R(k;\gamma)|^2\end{pmatrix},
 \ k\in\mathbb{R},\]
 where $\widehat\theta(x,t;k) = k x +4 k^3 t;$
\item asymptotics at the infinity:
\[\M(k)\to \mathbf{1}\equiv\begin{pmatrix}1&0\\0&1\end{pmatrix}\quad \mbox{ as } \quad k\to\infty.\]
\end{itemize}
\end{RHP}
\noindent Define the functions $q(x,t;\gamma), u(x,t;\gamma)$ by the formulas
\footnote{The $y(x;\gamma)$ from \cite{BB18} equals $y = \i q|_{t=0}.$}
\begin{equation}\label{qu}\begin{split}&
q(x,t;\gamma) = -2\i\lim\limits_{k\to\infty} k(\M(x,t;k;\gamma)-\mathbf{1})_{12}
=
2\i\lim\limits_{k\to\infty}  k(\M(x,t;k;\gamma)-\1)_{21}
\in\mathbb{R},
\\
&
u(x,t;\gamma) = q^2(x,t;\gamma)-q_x(x,t;\gamma),\quad 
\int_{x}^{+\infty}u(x,t;\gamma) = q(x,t;\gamma)+\int_x^{+\infty}q^2(z,t;\gamma)\d z,
\end{split}
\end{equation}
where the subscript $_{12}$ means the element situated on the intersection of the first row and the second column in the matrix. For $t=0,$ we denote $q(x;\gamma):=q(x,0;\gamma),$ $u(x;\gamma):=u(x,0;\gamma).$
The $q(x,t;\gamma)$ satisfies the (defocusing) modified Korteweg-de Vries equation (MKdV) and $u(x,t;\gamma)$ satisfies the Korteweg-de Vries equation,
\begin{subequations}\label{MKdVKdV}
\begin{align}
&q_t-6q^2q_x+q_{xxx}=0,
\\&
u_t-6uu_x+u_{xxx}=0,
\end{align}\end{subequations}
and $R(k;\gamma)$ is the reflection coefficient, associated with $q$ via MKdV scattering problem, and is the reflection coefficient, associated with $u$ via KdV scattering problem (see sections \ref{sectKdV}, \ref{sectMKdV} for a short explanation what does it mean).
Moreover, $q(x,t;\gamma)$ for $\gamma\in[0,1)$ is an example of a classical solution of MKdV, which is exponentially decaying as $x\to\pm\infty$ for all times $t.$ 
The $u(x,t;\gamma)$ is such an example for KdV, but already for all $\gamma\in[0,1],$ including the case $\gamma=1.$

\noindent Define the function
%\[
%F(s;\gamma) = \exp\left[-\frac{1}{8}\int\limits_{s}^{+\infty}
%(x-s)q^2\(\frac{x}{2},0;\gamma\)\d x\right]
%\sqrt{
%\cosh\sigma\(\frac{z}2,0;\gamma\)-\sqrt{\gamma}\sinh\sigma\(\frac{z}2,0;\gamma\)
%},
%\]
%or
\[
F(2s;\gamma) = 
\exp\left[
-\frac12\int\limits_{s}^{+\infty}(z-s)q^2(z,0;\gamma)\d z\right]
\sqrt{\cosh(\sigma(s;\gamma))-\sqrt{\gamma} \sinh(\sigma(s;\gamma))},
\]
where
\footnote{The $\mu(.;\gamma)$ from \cite{BB18} equals $\mu(2s;\gamma) = \sigma(s;\gamma).$}
\[
\sigma(s;\gamma):=\int_s^{+\infty}q(x,0;\gamma)\d x.\]

\noindent 
For $\gamma = 1$ the above expression equals
\begin{equation}\label{F}\begin{split}
F(2s;1)
&=
\exp\left[-\frac12\int\limits_{s}^{+\infty}(z-s)q^2(z,0;1)\d z-\frac12\int\limits_{s}^{+\infty}q(z,0;1)\d z\right]
=
\\
&=
\exp\left[-\frac12\int\limits_{s}^{+\infty}
\(q(z;1)+\int\limits_{z}^{+\infty}q^2(x,0;1)\d x\)
\d z
\right]=
\\
&=\exp\left[-\frac12\int\limits_{s}^{+\infty}(z-s)u(z,0;1)\d z\right] = \exp\left[-\frac12\int\limits_{s}^{+\infty}\int\limits_{x}^{+\infty}u(z,0;1)\d z\d x\right].
\end{split}
\end{equation}

It was shown in \cite{BB18} that the function $F(s;1)$ with $\gamma = 1$ plays an important role in the analysis of real eigenvalues in the real Ginibre ensemble. Namely,

\begin{thrm} (Baik, Bothner, \cite{BB18})
Let $\left\{z_j(\mathbf{X})\right\}_{j=1}^n$ denote the eigenvalues of a $n\times n$ matrix with independent $N(0,1)$ entries (the `real Ginibre matrix').
Then 
\begin{equation}\label{PTZ}
\lim\limits_{n\to\infty}\mathbb{P}\(\max\limits_{j:z_j\in\mathbb{R}}z_j(\mathbf{X})\leq \sqrt{n}+s\) = F(s;1),\quad s\in\mathbb{R}.\end{equation}
\end{thrm}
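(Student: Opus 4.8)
The plan is to start from the known exact (finite-$n$) Pfaffian description of the real spectrum of the real Ginibre ensemble and, in the edge scaling limit, to reduce the resulting Fredholm Pfaffian to the Zakharov--Shabat scattering data of RHP~\ref{RightRHP}. As a first step I would invoke the classical fact (Lehmann--Sommers, Edelman; Forrester--Nagao, Sinclair, Borodin--Sinclair) that the real eigenvalues of a real Ginibre matrix form a Pfaffian point process, so that for every $n$
\[
\mathbb{P}\Big(\max_{j:z_j\in\mathbb{R}}z_j(\mathbf{X})\le \sqrt n + s\Big)=\mathrm{Pf}\big(\J-\mathcal K_n\big)\big|_{L^2(\sqrt n + s,\,+\infty)},
\qquad \J=\begin{pmatrix}0&1\\-1&0\end{pmatrix},
\]
where $\mathcal K_n$ is the explicit $2\times2$ matrix kernel written in terms of incomplete Gamma functions and Gaussian factors.

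Next I would pass to the edge. After recentring at $\sqrt n$ one shows that $\mathcal K_n$ converges in trace norm on $L^2(s,+\infty)$ to a limiting matrix kernel $\mathcal K_\infty$ whose entries are built from the Gaussian symbol $\e^{-k^2/4}$; this is the limit computed by Poplavskyi--Tribe--Zaboronski~\cite{PZT}. Since Fredholm Pfaffians are continuous with respect to trace-norm convergence, the left-hand side of \eqref{PTZ} then equals $\mathrm{Pf}(\J-\mathcal K_\infty)$, a well-defined function of $s$.

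The heart of the argument is to recognise $\mathrm{Pf}(\J-\mathcal K_\infty)$ as the function $F(2s;1)$ of \eqref{F}, and I would do this in two sub-steps. First, exploiting the low-rank structure of $\mathcal K_\infty$ one factors the Fredholm Pfaffian as an elementary prefactor times $\sqrt{\det(1-K_s)}$, where $K_s$ is a \emph{scalar} integral operator on $L^2(s,+\infty)$ of integrable type whose kernel is generated precisely by the reflection coefficient $R(k;1)=-\e^{-k^2/4}$ of RHP~\ref{RightRHP}; keeping $\gamma$ free, the prefactor is $\sqrt{\cosh\sigma(s;\gamma)-\sqrt\gamma\,\sinh\sigma(s;\gamma)}$, which at $\gamma=1$ collapses to $\exp[-\tfrac12\int_s^{+\infty}q(z;1)\,\d z]$. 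Second, for an integrable operator of this kind one has the standard representation of $\det(1-K_s)$ through the solution $\M$ of the associated RHP: differentiating $\log\det(1-K_s)$ in $s$, using the resolvent identity and the trace formula for integrable kernels, and then inserting the reconstruction formulas \eqref{qu}, one expresses $\tfrac{\d}{\d s}\log\det(1-K_s)$ as an explicit functional of $q(\cdot,0;1)$ which, after one integration in $s$, reproduces the factor $\exp[-\tfrac12\int_s^{+\infty}(z-s)q^2(z;1)\,\d z]$ in $F$. The integration constant is fixed for free: as $s\to+\infty$ all objects trivialise ($K_s\to0$, $q\to0$) and both sides tend to $1$. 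Finally, the algebraic identities $u=q^2-q_x$ together with the integrations by parts recorded in \eqref{qu}--\eqref{F} yield the remaining equivalent forms of $F(2s;1)$, in particular its expression through the double primitive of $u(\cdot,0;1)$.

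I expect the main obstacle to be exactly this last exact matching, rather than the analytic inputs. Existence and uniqueness of $\M$ (by standard Riemann--Hilbert arguments: the jump matrix has determinant $1$, is Schwarz-symmetric, and satisfies $\|R(\cdot;\gamma)\|_\infty\le1$ with equality only at $k=0$ and only when $\gamma=1$), the trace-norm convergence $\mathcal K_n\to\mathcal K_\infty$, and the smoothness of $q$ — the latter being part of the present paper — are all comparatively routine. The genuine work is to identify the limiting Pfaffian kernel of \cite{PZT} with the integrable operator attached to $-\e^{-k^2/4}$, to bookkeep the elementary Pfaffian prefactor and the $\gamma=1$ degeneration correctly, and to carry out the resolvent and differential-identity computations carefully enough that the several equivalent expressions for $F(2s;1)$ displayed in \eqref{F} indeed all coincide. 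This is the content of \cite{BB18}.
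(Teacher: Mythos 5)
This theorem is not proved in the present paper at all: it is imported verbatim from Baik--Bothner \cite{BB18}, which in turn builds on the Fredholm-determinant/Pfaffian identification of the left-hand side of \eqref{PTZ} due to Rider--Sinclair \cite{RS} and Poplavskyi--Tribe--Zaboronski \cite{PZT}. So there is no in-paper argument to compare yours against; what I can assess is whether your outline would constitute a proof. It correctly reproduces the architecture of the cited works (Pfaffian point process at finite $n$, edge scaling limit of the $2\times2$ matrix kernel, factorisation of the Fredholm Pfaffian into an elementary prefactor times $\sqrt{\det(1-K_s)}$ for a scalar integrable operator, and the RHP/differential-identity computation that turns $\det(1-K_s)$ into the exponential of the double integral of $q^2$). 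The $\gamma=1$ collapse of the prefactor via $\cosh\sigma-\sinh\sigma=\e^{-\sigma}$ is also right.

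However, as a proof the proposal has a genuine gap, and you name it yourself: the identification of the limiting Pfaffian kernel with the integrable operator generated by $R(k;1)=-\e^{-k^2/4}$, and the resolvent computation that produces the explicit formula \eqref{F}, are asserted rather than carried out, with the remark ``this is the content of \cite{BB18}.'' Those two steps \emph{are} the theorem; everything else in your outline is framing. Two further points deserve explicit care rather than a passing mention: (i) at $\gamma=1$ the symbol satisfies $|R(0;1)|=1$, so the relevant operator is only borderline trace class and the trace-norm continuity you invoke for Fredholm Pfaffians must be justified at this critical value (this is precisely why \cite{BB18} work with the family $\gamma\in[0,1]$ and control the limit $\gamma\to1$); and (ii) the trace-norm convergence $\mathcal K_n\to\mathcal K_\infty$ of the finite-$n$ edge kernel is nontrivial for a matrix kernel with derivative and antiderivative entries and is a substantive part of \cite{PZT}. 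Without these, the argument is a correct roadmap but not a proof.
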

The work  \cite{BB18} is based on a previous work of Rider, C. Sinclair \cite{RS}; Poplavsky, Tribe, Zaboronski \cite{PZT}, 
 where the left-had-side of \eqref{PTZ} is identified with a certain Fredholm determinant.

It was noticed in \cite{BB18} that for $\gamma\in[0,1)$ the function
$q(x;\gamma) := q(x,0;\gamma)$ belongs to the Schwartz class $\mathbb{S}(\mathbb{R})$, while for $\gamma = 1$ it does not.
Our first goal here is to answer the following question:
{\it to which class does $q(x;1)$ belong?}
We show that $q(x;1)$ is infinitely smooth in $x,$ decays exponentially as $x\to+\infty,$ and decays as $x^{-1}$ for $x\to-\infty$ (see formulas \eqref{q1}, \eqref{intq1} below).
In more details, we show 
\begin{thrm}\label{teor1}
\begin{enumerate}[(a)]
\item For any $\gamma\in[0,1],$ the function $q(x;\gamma)\in C^{\infty}(x\in\mathbb{R});$
\item For any $x\in\mathbb{R},$ the function $q(x;\gamma) \in C(\gamma\in[0,1]);$
\item for fixed $\gamma\in(0,1)$ and $x\to-\infty,$ for any $C>0,$
\begin{equation}\label{qgamma}\begin{split}&q(x;\gamma) = 
\dfrac{8 \kappa_{\gamma}^2\e^{2 x \kappa_{\gamma}}{L_{-1}(\gamma)}}{4\kappa_{\gamma}^2 - \e^{4x\kappa_{\gamma}}{L_{-1}(\gamma)}^2}
+\mathcal{O}(\e^{-C |x|}),
\\
&
\int\limits_{x}^{+\infty}q^2(\tilde x;\gamma)\d\tilde x = 2T_1(\gamma)- \dfrac{4\kappa_{\gamma}\e^{4 x \kappa_{\gamma}}{L_{-1}(\gamma)}^2}{4\kappa_{\gamma}^2 - \e^{4x\kappa_{\gamma}}{L_{-1}(\gamma)}^2}+\mathcal{O}(\e^{-C |x|}).
\end{split}
\end{equation}
Here $\kappa_{\gamma} = \sqrt{-2\ln\gamma}\geq0$ (so that $\kappa_1 = 0$ ), and 
\begin{equation}\label{T1}
\begin{split}&
T_1(\gamma) =  \frac{-1}{2\pi}\int\limits_{-\infty}^{+\infty}\ln(1-\gamma\e^{-\frac{s^2}{2}})\d s
=
\frac{1}{\sqrt{2\pi}}Li_{\frac32}(\gamma)
>0,
\\
&
L_{-1}(\gamma) = \frac{1}{\kappa_{\gamma}}
\exp\left[\dfrac{1}{\pi\i}\displaystyle\int\limits_{-\infty}^{+\infty}
\frac{\ln(1-\gamma\e^{-\frac{s^2}{2}})\ \d s}{s-\i\kappa_{\gamma}}\right]>0.
\end{split}
\end{equation}
\item 
for $\gamma=1,$ as $x\to-\infty,$ for any $C>0,$
\begin{equation}\label{q1}\begin{split}
&q(x;1)=\frac{2}{-2x+L_1(1)}+\mathcal{O}(\e^{-C|x|}),
\\
&
\int\limits_{x}^{+\infty}q^2(\tilde x;\gamma)\d\tilde x = 2T_1(1) + \frac{2}{2x-L_1(1)}+\mathcal{O}(\e^{-C|x|}),
\end{split}\end{equation}
where $T_1(1)$ is as in \eqref{T1}, 
\[T_1(1) =  \frac{-1}{2\pi}\int\limits_{-\infty}^{+\infty}\ln(1-\e^{-\frac{s^2}{2}})\d s
=
\frac{1}{\sqrt{2\pi}}Li_{\frac32}(1) = \frac{1}{\sqrt{2\pi}}\zeta\(\frac32\)\approx 1.042\,186\,978\,869 ,\]
and
\[L_1(1) = 2-\frac{1}{\pi}\int\limits_{\Sigma_{1/4}}{\ln\([1- \e^{-\frac{s^2}{2}}]\frac{s^2+1}{s^2}\)} \, \frac{\d s}{s^2} \approx  1.165\,194\,315\,878\,021\,340\,410\,354,\]
with the integral over the oriented contour $\Sigma_{1/4} = (-\infty,-\frac14)\cup(-\frac14,-\frac{\i}4)\cup(-\frac{\i}4,\frac14)\cup(\frac14,+\infty).$  

\item 
\begin{equation}\label{intq1}\int\limits_{x}^{+\infty}q(\tilde x;1)\d\tilde x = 
\ln\(-2x+L_1(1)\)+\frac12\ln2+\mathcal{O}(\e^{-C|x|}),\quad x\to-\infty.\end{equation}

\end{enumerate}
\end{thrm}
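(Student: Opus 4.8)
The plan is to derive \eqref{intq1} directly from the second asymptotic formula in \eqref{q1} together with the exponential decay of $q(x;1)$ at $+\infty$ established in parts (a)-(d). First I would write, for $x$ large negative,
\[
\int\limits_{x}^{+\infty}q(\tilde x;1)\,\d\tilde x
= \int\limits_{x}^{x_0}q(\tilde x;1)\,\d\tilde x + \sigma(x_0;1),
\]
for a fixed reference point $x_0$, and observe that on $(x,x_0)$ we may use the leading term $q(\tilde x;1) = \tfrac{2}{-2\tilde x + L_1(1)} + \mathcal O(\e^{-C|\tilde x|})$ from \eqref{q1}. The elementary antiderivative is $\int \tfrac{2\,\d\tilde x}{-2\tilde x + L_1(1)} = -\ln(-2\tilde x + L_1(1))$, so integrating the leading term from $x$ to $x_0$ produces
\[
-\ln(-2x_0 + L_1(1)) + \ln(-2x + L_1(1)),
\]
and the error $\mathcal O(\e^{-C|\tilde x|})$ integrates to a term that is $\mathcal O(\e^{-C|x|})$ (absorbing the lower endpoint contribution, which is an $x$-independent constant, into the constant that must still be pinned down). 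Thus
\[
\int\limits_{x}^{+\infty}q(\tilde x;1)\,\d\tilde x = \ln(-2x + L_1(1)) + \mathrm{const} + \mathcal O(\e^{-C|x|}),
\]
and the whole content of \eqref{intq1} reduces to showing that $\mathrm{const} = \tfrac12\ln 2$.

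The identification of the constant is the main obstacle, and I would handle it by a conserved-quantity / trace-formula argument rather than by chasing constants through the RHP steepest-descent analysis. The key is the KdV-type identity already recorded in \eqref{qu} and used in \eqref{F}: $\int_x^{+\infty}u(z,0;1)\,\d z = q(x;1) + \int_x^{+\infty}q^2(z,0;1)\,\d z$. Since $u(\cdot,0;1)$ is a genuine (fast-decaying, even at $\gamma=1$) reflectionless-at-threshold potential, its total "mass" $\int_{-\infty}^{+\infty}\!\!\int_x^{+\infty}u$ and the associated trace identities are computable from the scattering data $R(k;1) = -\e^{-k^2/4}$ via the standard formula $\int_{-\infty}^{+\infty}u(z)\,\d z = -\tfrac{2}{\pi}\int_{-\infty}^{+\infty}\ln(1-|R(k)|^2)\,\d k$ (mod the bound-state contributions, which here degenerate to a single threshold resonance responsible for the algebraic $x^{-1}$ tail). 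Concretely, letting $x\to-\infty$ in the displayed identity and using the second line of \eqref{q1} for $\int_x^{+\infty}q^2$ and the first line of \eqref{q1} for $q(x;1)$, one gets $\int_{-\infty}^{+\infty}u(z,0;1)\,\d z = 2T_1(1)$ up to the algebraically decaying pieces, which must cancel — this both cross-checks the formulas and isolates how the resonance normalization enters $L_1(1)$.

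Carrying this out: I would combine \eqref{intq1}'s target with the relation $\sigma(x;1) = \int_x^{+\infty} q$ and match against the expression for $F(2s;1)$ in \eqref{F}. Taking $-2\ln F(2s;1) = \int_s^{+\infty}(z-s)u(z,0;1)\,\d z$ and differentiating in $s$ gives $\int_s^{+\infty}u(z,0;1)\,\d z = q(s;1) + \int_s^{+\infty}q^2$, so a second antiderivative of $q$ is controlled by the already-known asymptotics of $\int_s^{+\infty}(z-s)u(z,0;1)\,\d z$ — and that quantity can be evaluated as $s\to-\infty$ directly from the factored form of $F$ once one knows the $\cosh/\sinh$ factor is absent at $\gamma=1$ (which is exactly what \eqref{F} records). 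The $\tfrac12\ln 2$ then emerges as $-\tfrac12\ln\gamma|_{\gamma=1}$-type boundary data from the $L$-function normalization: tracking the prefactor in $L_{-1}(\gamma) = \kappa_\gamma^{-1}\exp[\cdots]$ through the $\gamma\to1$, $\kappa_\gamma\to0$ limit shows the $\kappa_\gamma^{-1}$ and the residual exponential conspire to leave precisely an additive $\tfrac12\ln 2$ inside the logarithm after one takes the limit $\gamma\to 1$ in the $\gamma\in(0,1)$ analogue of \eqref{intq1} (which follows from \eqref{qgamma} by the same elementary integration). I expect the bookkeeping of this $\gamma\to1$ degeneration — reconciling $\ln(4\kappa_\gamma^2 - \e^{4x\kappa_\gamma}L_{-1}(\gamma)^2)$ as $\kappa_\gamma\to0$ with $\ln(-2x+L_1(1))$ — to be the one genuinely delicate computation; everything else is elementary integration plus invocation of parts (a)-(d).
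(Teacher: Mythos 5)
Your opening reduction is correct and worth keeping: integrating the leading term of \eqref{q1} over $(x,x_0)$ gives $\ln(-2x+L_1(1))+\mathrm{const}+\mathcal O(\e^{-C|x|})$, so part (e) is exactly the claim $\mathrm{const}=\tfrac12\ln 2$. The gap is that none of the routes you propose actually determines that constant. The identity $\int_x^{+\infty}u=q+\int_x^{+\infty}q^2$, and its once-integrated form $\int_s^{+\infty}(z-s)u\,\d z=\int_s^{+\infty}q\,\d z+\int_s^{+\infty}(z-s)q^2\,\d z$, only tie $\int_s^{+\infty}q$ to $\int_s^{+\infty}(z-s)u\,\d z$, and the latter is not known independently: the several expressions for $F(2s;1)$ in \eqref{F} are algebraic identities among quantities all defined through $q$, so matching them yields no new information — that argument is circular. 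The trace formula $\int_{-\infty}^{+\infty}u=2T_1(1)$ constrains only $q+\int q^2$, which is already fixed by parts (c)--(d), and says nothing about the additive normalization of $\int q$ (all the standard Zakharov--Shabat conserved densities are even in $q$). The remaining route, letting $\gamma\to1$ in the $\gamma<1$ asymptotics, is genuinely singular — $\int_x^{+\infty}q(\cdot;\gamma)$ converges as $x\to-\infty$ for $\gamma<1$ but diverges logarithmically at $\gamma=1$ — and would require uniformity in $\gamma$ of the error terms in \eqref{qgamma} that is not established; the paper itself presents this limit interchange only as a conjectural remark resting on the numerically observed expansion \eqref{Lkappa}. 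So the one computation you defer as ``genuinely delicate'' is in fact the whole of part (e), and the tools you list do not reach it.

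The missing idea is to evaluate the Riemann--Hilbert solution at $k=0$ rather than at $k=\infty$. Expanding $\M$ at $k=0$ and feeding the expansion into the $x$-equation gives the \emph{exact} identity $\M(x;0;1)=\begin{pmatrix}\cosh\sigma&\sinh\sigma\\ \sinh\sigma&\cosh\sigma\end{pmatrix}$ with $\sigma=\int_x^{+\infty}q(z;1)\,\d z$; this is the object that carries the additive constant of $\int q$, precisely because it is odd in $q$. One then computes the same matrix asymptotically as $x\to-\infty$ by undoing the transformation $\M\mapsto\N$ and inserting the model solution $\N_{mod}$ of \eqref{Nmod1} together with the local expansions $T(k;1)=-\i k/\sqrt2+\mathcal O(k^2)$ and $L(k;1)=-(1+\i L_1(1)k+L_2(1)k^2+\dots)$. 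Matching the $k^{0}$ terms (the singular $k^{-2}$ and $k^{-1}$ terms cancel because $\alpha=\beta=1/(L_1(1)-2x)$) yields $\e^{\sigma}=\sqrt2\,(L_1(1)-2x)+\mathcal O(\e^{-C|x|})$, and the $\sqrt2$ — hence the $\tfrac12\ln2$ — comes precisely from the leading coefficient of $T(k;1)$ at the origin. Without this, or some equivalent exact evaluation of a quantity odd in $q$, the constant cannot be pinned down.
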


\noindent Denote, 
\begin{subequations}\label{cons}
\begin{align}
&\label{K}
\mbox{for $\gamma\in[0,1],\ $}\quad
H(\gamma) = 3\int\limits_{-\infty}^{+\infty}u^2(x,t;\gamma)\d x,
\quad
K(\gamma) = \int\limits_{-\infty}^{+\infty}xu(x,t;\gamma)\d x+H(\gamma)t;
\\
&
\mbox{for $\gamma\in[0,1],$ } \quad
N(\gamma)=3\int\limits_{-\infty}^{+\infty}\(q^4(x,t;\gamma)+q_x^2(x,t;\gamma)\)\d x>0;
\\
&
\mbox{for $\gamma\in(0,1),$ } \quad
M(\gamma) = \int\limits_{-\infty}^{+\infty}
xq^2(x,t;\gamma)\d x + N(\gamma)t - \frac12\ln|\ln\gamma|
-\ln2;
\\
&
\nonumber
\mbox{for $\gamma=1$ and $x<0,$ } \quad
\\
&\label{M1}
\qquad
M(1) = \int\limits_{-\infty}^{x}
\(zq^2(z,t;\gamma)-\frac1{z}\)\d z
+
\int\limits_{x}^{+\infty}
zq^2(z,t;\gamma)\d z
 + N(1)t+\ln|x|+\frac32\ln2-1.
\end{align}
\end{subequations}

All the functions $K(\gamma)$, $H(\gamma)$, $M(\gamma),$ $N(\gamma)$ are conserved quantities, i.e. they do not depend on time $t$. Quantity $M(1)$ do not depend on the choice of $x<0$ (Lemma \ref{lem_cons}).

\begin{cor}\label{cor1}
We have as $s\to-\infty,$ for any $C>0,$
\[
\begin{split}
%&
%\sigma(2t;1) = \ln (2|t|+L_1(1)) + const + \mathcal{O}(\e^{-C|t|}),
%\\
%&
%-\frac18\int\limits_{t}^{+\infty}(x-t)q^2(x;1)\d x
%=-\frac12 T_1(1) |t| + \frac12 \ln(|t|+L_1(1))+const + \mathcal{O}(\e^{-Ct}),
%\\
%&F(t;1) = \e^{-\frac12T_1(1)|t|}const(1+\mathcal{O}(\e^{-C|t|})).
%\\
&
F(s;1) = \e^{\frac12 T_1(1) s}\e^{-\frac12 K(1)}(1+\mathcal{O}(\e^{-C|s|})),
%\\
%&
%F(s;1) = \exp[\frac12T_1(1)s-\frac12M(1)+\mathcal{O}(\e^{-C|s|})].
\end{split}
\]
and $K(1) = M(1).$
\end{cor}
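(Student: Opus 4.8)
The plan is to feed the asymptotics of Theorem \ref{teor1}(d)--(e) into the definition of $F(2s;1)$ and then to recognize the resulting exponential rate as $\tfrac12 T_1(1)$ and the constant as $-\tfrac12 K(1)$. Recall
\[
F(2s;1) = \exp\left[-\frac12\int_s^{+\infty}(z-s)u(z,0;1)\,\d z\right],
\]
so with $t=0$ the exponent is $-\frac12\int_s^{+\infty}(z-s)u(z,0;1)\,\d z$. I would split $\int_s^{+\infty}(z-s)u = \int_{-\infty}^{+\infty}(z-s)u - \int_{-\infty}^{s}(z-s)u$; the first piece is $\int_{-\infty}^{+\infty} z\,u(z,0;1)\,\d z - s\int_{-\infty}^{+\infty}u(z,0;1)\,\d z = K(1) - s\,(2T_1(1))$ by \eqref{K} at $t=0$ and by the identity $\int_{-\infty}^{+\infty}u(z,0;1)\,\d z = 2T_1(1)$, which itself follows from \eqref{qu} (the identity $\int_x^{+\infty}u = q + \int_x^{+\infty}q^2$) by sending $x\to-\infty$ and using \eqref{q1}: $q(x;1)\to0$ and $\int_x^{+\infty}q^2(\tilde x;1)\,\d\tilde x\to 2T_1(1)$. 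Thus the leading behaviour of $F(2s;1)$ is $\exp[\tfrac12 T_1(1)\, s \cdot 2]\cdot\ldots$; after the change of variable $s\mapsto s/2$ that matches the advertised $\e^{\frac12 T_1(1)s}$.

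The remaining task is the tail integral $-\frac12\int_{-\infty}^{s}(z-s)u(z,0;1)\,\d z$ as $s\to-\infty$. Here $u = q^2 - q_x$, and on $(-\infty,s)$ with $s$ large negative the asymptotics \eqref{q1} apply uniformly (the error $\mathcal{O}(\e^{-C|x|})$ is integrable against $(z-s)$, contributing only $\mathcal{O}(\e^{-C|s|})$). Write $\int_{-\infty}^s (z-s)u\,\d z = \int_{-\infty}^s (z-s)q^2\,\d z - \int_{-\infty}^s (z-s)q_x\,\d z$; the second integral is handled by integration by parts, $\int_{-\infty}^s (z-s)q_x\,\d z = [(z-s)q]_{-\infty}^s - \int_{-\infty}^s q\,\d z = -\int_{-\infty}^s q(z;1)\,\d z$, which diverges logarithmically but is controlled by \eqref{intq1} rewritten as $\int_{-\infty}^s q = -\int_s^{+\infty}q + \int_{-\infty}^{+\infty}q$, hence behaves like $-\ln(-2s) - \tfrac12\ln2 + \mathrm{const}$. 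For $\int_{-\infty}^s(z-s)q^2\,\d z$ I would substitute the explicit rational profile $q(z;1)\approx 2/(-2z+L_1(1))$ and integrate directly: $q^2\approx 4/(-2z+L_1(1))^2$, so $\int_{-\infty}^s(z-s)q^2\,\d z$ is an elementary (log plus bounded) expression in $s$. Collecting the two divergent logarithms, they must cancel against the $-s\cdot 2T_1(1)$-type bookkeeping to leave a finite constant; the cleanest bookkeeping device is precisely the renormalized quantity $M(1)$ in \eqref{M1}, whose finiteness (Lemma \ref{lem_cons}) encodes exactly this cancellation, which is why the corollary asserts $K(1)=M(1)$.

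Therefore the equality $K(1)=M(1)$ I would prove directly: starting from $M(1)=\int_{-\infty}^{x}(zq^2-\tfrac1z)\,\d z+\int_x^{+\infty}zq^2\,\d z+\ln|x|+\tfrac32\ln2-1$ at $t=0$, substitute the asymptotics \eqref{q1} into the lower integral, let $x\to-\infty$, and check that the explicit profile contributions combine with $\ln|x|+\tfrac32\ln2-1$ to give exactly $\int_{-\infty}^{+\infty}zu(z,0;1)\,\d z=K(1)$ — here one uses $u=q^2-q_x$ and $\int_{-\infty}^{+\infty}zq_x\,\d z = -\int_{-\infty}^{+\infty}q$, together with \eqref{intq1} to evaluate the $x\to-\infty$ limit of $\int_{-\infty}^{x}q$. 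Both facts ($\int z q_x = -\int q$ and the tail evaluation) feed the $+\tfrac32\ln2-1$ and $\ln|x|$ counterterms. The main obstacle is purely the bookkeeping of divergences: one has to be scrupulous that the logarithmic divergence from $\int(z-s)q^2$ near $-\infty$, the logarithmic divergence from $\int q$ near $-\infty$, and the constants $\tfrac12\ln2$, $\tfrac32\ln2$, $-1$, $L_1(1)$ all assemble correctly; everything else is elementary integration of rational functions and exponentially small remainders. I expect the change of variables $2s\leftrightarrow s$ between the definition of $F(\cdot;1)$ and the statement to be the most error-prone minor point, so I would fix that normalization first.
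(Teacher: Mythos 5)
Your strategy is essentially the paper's: feed the asymptotics of Theorem \ref{teor1}(d),(e) into \eqref{F} and identify the constant through the conserved quantities \eqref{cons}. The one structural difference lies in how $K(1)=M(1)$ is obtained. The paper evaluates $F(2s;1)$ twice --- once from the first ($q$-based) expression in \eqref{F}, where the renormalized quantity \eqref{M1} naturally absorbs the divergences and yields the constant $-\tfrac12 M(1)$, and once from the third ($u$-based) expression, which yields $-\tfrac12 K(1)$ --- and reads off $K(1)=M(1)$ by comparing the two answers. You evaluate $F$ only via the $u$-form and then propose a separate direct verification that $M(1)$ equals $\int zu$. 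Both routes work; the paper's is slightly cleaner because the equality of the constants falls out of the uniqueness of the asymptotic expansion of $F$, with no extra cutoff bookkeeping.

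There are, however, two concrete slips you must repair, both in the tail integral over $(-\infty,s)$. First, you treat $\int_{-\infty}^{+\infty}q(z;1)\,\d z$ as a finite constant (in ``$\int_{-\infty}^{s}q=-\int_{s}^{+\infty}q+\int_{-\infty}^{+\infty}q$'' and in ``$\int_{-\infty}^{+\infty}zq_x\,\d z=-\int_{-\infty}^{+\infty}q$''). By \eqref{q1}, $q(z;1)\sim -1/z$ as $z\to-\infty$, so this integral diverges logarithmically; the ``$\mathrm{const}$'' you absorb it into is infinite, and as literally written the computation of the constant term collapses. Second, in $\int_{-\infty}^{s}(z-s)q_x\,\d z=[(z-s)q]_{-\infty}^{s}-\int_{-\infty}^{s}q\,\d z$ you drop the boundary term, but $(z-s)q(z;1)\to-1$ as $z\to-\infty$, so the boundary term equals $1$, not $0$; omitting it shifts the constant (this $1$ is precisely the $-1$ appearing in \eqref{M1}). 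The correct picture is that the logarithmic divergences of $\int_{-\infty}^{s}(z-s)q^2$ and $\int_{-\infty}^{s}(z-s)q_x$ cancel against \emph{each other} (not against the $-2sT_1(1)$ term, which is already finite): $u=q^2-q_x$ is $\mathcal{O}(\e^{-C|z|})$ as $z\to-\infty$ because the $4(L_1(1)-2z)^{-2}$ profiles of $q^2$ and $q_x$ coincide, whence $\int_{-\infty}^{s}(z-s)u\,\d z=\mathcal{O}(\e^{-C|s|})$ directly. To make this rigorous either establish the $q_x$ asymptotics from the Riemann--Hilbert analysis and work with a cutoff $\int_{-X}^{s}$, $X\to\infty$, or avoid $q_x$ on the divergent side altogether by integrating by parts only over the convergent tail $(s,+\infty)$, as the paper does when passing between the first and third expressions in \eqref{F}. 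With those repairs, your bookkeeping --- including the $2s\leftrightarrow s$ normalization you rightly single out --- goes through.
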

\begin{proof}
Using asymptotics \eqref{q1}, \eqref{intq1} and conservation law \eqref{M1} at the time $t=0,$
we find 
\[\begin{split}&
-\frac12\int\limits_{s}^{+\infty}zq^2(z;1)\d z = \frac{s}{L_1-2s}+\frac12\ln(L_1-2s)-\frac12M(1)+\frac14\ln2+\mathcal{O}(\e^{-C|s|}),
\\
&
\frac{s}{2}\int\limits_{s}^{+\infty}q^2(z;1)\d z = s T_1(1)+\frac{s}{2s-L_1(1)}+\mathcal{O}(\e^{-C|s|}),
\\
&
-\frac12\int\limits_{s}^{+\infty}q(z;1)\d z
=
-\frac12\ln(L_1(1)-2x)-\frac14\ln2+\mathcal{O}(\e^{-C|x|}).
\end{split}\]
Substitute this into the first formula of \eqref{F},
then
\[F(2s;1) = \exp\left[T_1(1)s-\frac12M(1)+\mathcal{O}(\e^{-C|s|})\right].\]
Furthermore, using the third of formulas \eqref{F}, asymptotics \eqref{q1}, \eqref{intq1}, and expression \eqref{qu} of $u=q^2-q_x,$ integrating by parts,
we find that 
\[F(2s;1) = \exp\left[T_1(1)s-\frac12K(1)+\mathcal{O}(\e^{-C|s|})\right].\]

Hence, $K(1)=M(1).$
%
%
%Let us notice that
%\[
%\int\limits_{t}^{+\infty}\(\int\limits_{x}^{+\infty}
%q^2({\tilde x};1) \d\tilde x \)\d x = \int\limits_{t}^{+\infty}(x-t)
%q^2(x;1)\d x.
%\]
%Integrating the asymptotics \eqref{q1}, from Theorem \ref{teor1}
%we obtain the statement of corollary.
\end{proof}
%\begin{rem}Numerically $\frac{T_1(1)}{2} = \frac{1}{2\sqrt{2\pi}}\mathrm{Li}_{\frac32}(1),$ which agrees with \cite[corollary 1.9, p.5]{BB18}.
%\end{rem}

\begin{rem}\label{remAsympKdV}
The quantity $K(1)$ from the
formula in Corollary \ref{cor1} was found by non-rigorous computations by Forrester \cite[(2.26), (2.30)]{F},
in the form of a slowly convergent series,
\[K(1)=-2\(\ln2-\frac14+\frac{1}{4\pi}\sum\limits_{n=2}^{\infty}\frac{1}{n}\(-\pi+\sum\limits_{j=1}^{n-1}\frac{1}{\sqrt{j(n-j)}}\)\)
\approx
-0.1254.
\]
On the other hand, Baik and Bothner \cite[unnumbered formula for $\eta_0(1) = \e^{-\frac12K(1)}$ on p.6, formula (1.16)]{BB18}
found numerically another value of $K(1),$
\[K(1) \approx 0.56798925.\]

The fact, that $K(1)$ is a conserved quantity of the KdV, allows, in principle, to compute $K(1)$ by using (known) large time $t\to+\infty$ asymptotics of the $u(x,t;1).$ 
Indeed, for $t=0$ we might study only the asymptotics $x\to\pm\infty$ of $u(x,0;\gamma).$ When $t\to+\infty,$ we know in principle the asymptotics for $u(x,t;\gamma)$ for all $x,$ which means that we can find an expression for integral of $u(x,t;\gamma).$ Easier said than done, and we do not pursue this issue here. 
For a note, we list the known leading asymptotic as $t\to+\infty$ terms for $u(x,t;\gamma)$ 
(\cite{DVZ}
%\footnote{There are a couple of misprints in \cite{DVZ}: 
%on page 200, formula (3): instead of $(3t)^{1/2}$ there should be $(3t)^{1/3};$  
%in formula (6), expression for $\delta(z_0):$  instead of $-\pi/4$ there should be $+\pi/4.$}
, \cite[Thm 5.4]{GT}),

\begin{enumerate}
\item $x<-\varepsilon t$ (similarity asymptotics):
\begin{equation}\label{asympKdV}
\begin{split}
&u(x,t)\sim \sqrt{\frac{4\nu(\xi)k_0(\xi)}{3t}} \, \sin\(16tk_0^3(\xi) -\nu(\xi)\ln\(192tk_0^3(\xi)+\delta(\xi)\) \),
\end{split}
\end{equation}
\[\begin{split}
&\int\limits_{x}^{+\infty}u(\tilde x,t)\d \tilde x
\sim 
\frac{-1}{\pi}\int\limits_{-k_0}^{k_0}\ln\(1-|R(z)|^2\)\d z
-\sqrt{\frac{\nu(\xi)}{3k_0 t}} \, \cos\(16tk_0^3 -\nu(\xi)\ln\(192tk_0^3 +\delta(\xi)\) \),
\end{split}
\]
\\where\\
$\xi=\frac{x}{12t},\ k_0=k_0(\xi)=\sqrt{-\xi}, \ \nu(\xi) = \frac{-1}{2\pi}\ln\(1-|R(k_0(\xi))|^2\),$
\\
$\delta(\xi) = \frac{\pi}{4} - \arg R(k_0(\xi))+\arg\Gamma(\i\nu(k_0(\xi)))-\frac{1}{\pi}\int\limits_{-k_0}^{k_0}\ln\(\frac{1-|R(\zeta)|^2}{1-|R(k_0)|^2}\)\frac{\d\zeta}{\zeta-k_0}.$

\item 
$-C<\frac{x}{t^{1/3}}<C:$
\[
u(x,t)\sim\frac{1}{(3t)^{2/3}}\(p^2(s)-p'(s)\),\quad \mbox{ where } s=\frac{x}{(3t)^{1/3}},
\]
and 
$p(s)$ is the solution of the Painlev\'e II equation
\[p''(s) - sp(s) - 2p^3(s) = 0,\] fixed by its asymptotics
$p(s) \sim - R(0)Ai(s),\ s\to+\infty.$
For $R(0)>-1, $ $p(s)$ is oscillating and vanishing as $s\to-\infty,$ and for $R(0)=-1,$ $p(s)\sim\sqrt{-\frac12s}$ as $s\to-\infty$ (see Hastings, McLeod \cite{HM} for details).

\item in the case $R(0)=-1,$ there is an additional region 
$-C_2<\frac{x}{t^{1/3}(\ln t)^{2/3}}<-C_1:$
with an elliptic asymptotics,
\[
u(x,t)\sim\frac{-2x}{3t}\left[
A(\alpha) + B(\alpha) \mathrm{cn}^2\(2K(\alpha)\theta+\theta_0;\alpha\),\
\right],
\]
where the slow parameter $\alpha=\alpha(s)$ is determined by 
\[\alpha = \alpha(s) = 1-\frac{a^2(s)}{b^2(s)},\quad \mbox{where } 0\leq a(s)\leq b(s)\leq\sqrt{2} \mbox{ are determined by the system} \]
\[ 
a^2+b^2 = 2, s=24\int_a^b\sqrt{(y^2-a^2)(b^2-y^2)}\d y, 0\leq s\leq 8^{3/2}
\]
and we refer the reader to the original paper \cite{DVZ} for details about the other quantities in the above formula.

\item
$x>\varepsilon t:$ $\quad u(x,t)\sim 0$ (there are no solitons in our case).
\end{enumerate}

Here $\varepsilon, C, C_1, C_2$ are generic positive constants. Between the regions there are gaps, which to the best of our knowledge were not studied in the literature.

\end{rem}

\begin{Crem}
Substituting the above asymptotics of $u$ into the expression \eqref{K} of $K(\gamma)$, and making some heuristic computations

(like those: since $\int_{-\infty}^{-\varepsilon t}u(x,t)\d x = \mathcal{O}(1),$ then $\int_{-\infty}^{-\varepsilon t}x u(x,t)\d x = \mathcal{O}(t), t\to+\infty;$ 

furthermore, $\int_{-C t^{1/3}}^{C t^{1/3}}x
\frac{1}{(3t)^{2/3}}\(p^2\(\frac{x}{(3t)^{1/3}}\) - p'\(\frac{x}{(3t)^{1/3}}\)\)\d x = \mathcal{O}(1), t\to+\infty$),

\noindent we guess that the similarity asymptotics give the contribution of the order $t^{1}$ into $K(\gamma),$ and Painleve asymptotics give a contribution of the order $t^0.$ Let us mention, that the contribution of the order $t^{1}$ is always non-zero, even when there are no solitons, as in our case. Furthermore, the integral $\int_{-\infty}^{+\infty} s(p^2(s)-p'(s))\d s,$ which might be convergent for $\gamma<1,$ but definitely divergent for $\gamma = 1,$ might be regularized for $\gamma = 1.$

Indeed, function $p(s),$ corresponding to the case $\gamma=1,$
has the asymptotics as $s\to\pm\infty:$
\[
p(s) \sim Ai(s),\ s\to+\infty,
\quad p(s) = \frac{\sqrt{-s}}{\sqrt{2}}\(1+\frac{1}{8 s^3}-\frac{73}{128s^6}+\mathcal{O}(s^{-9})\),\ s\to-\infty,
\]
which admits element-wise differentiation w.r.t. $s,$
so that
\[s(p^2(s)-p'(s)) = \frac12s^2-\frac{\sqrt{-s}}{2\sqrt{2}}+\frac18s^{-1}+\mathcal{O}(|s|^{-5/2}), \ s\to-\infty.\]
We have a convergent integral
\[P=
\int\limits_{-\infty}^s\left[\tilde s(p^2(\tilde s)-p'(\tilde s)) - \frac12\tilde s^2+\frac{\sqrt{-\tilde s}}{2\sqrt{2}}-\frac{1}{8\tilde s}\right] \d\tilde s+
\int\limits_{s}^{+\infty}
\tilde s\(p^2(\tilde s)-p'(\tilde s)\)\d\tilde s,\]
which does not depend on the choice of $s<0.$

We would expect that $K(1)$ from Corollary \ref{cor1} is related to $P$,
$K(1)\asymp P.$

\end{Crem}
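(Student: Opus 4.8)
Since the remark is heuristic, the aim of a derivation is not a rigorous identity but a matched-asymptotics computation that explains the orders $t^1$ and $t^0$ and produces the regularized constant $P.$ The natural starting point is the exact conservation law (Lemma \ref{lem_cons}) read off from \eqref{K}: for every $t,$
\[\int_{-\infty}^{+\infty} x\, u(x,t;\gamma)\,\d x = K(\gamma) - H(\gamma)\, t.\]
Thus it suffices to evaluate the left-hand side as $t\to+\infty$ by inserting, region by region, the asymptotics of Remark \ref{remAsympKdV}, to check that the coefficient of $t^1$ reproduces $-H(\gamma)$ and to read off the $t^0$ term as $K(\gamma).$ The plan is to split $\mathbb{R}$ into the similarity zone $x<-\varepsilon t,$ the Painlev\'e zone $|x|<Ct^{1/3},$ (for $\gamma=1$) the elliptic zone $x\sim -t^{1/3}(\ln t)^{2/3},$ and the soliton-free zone $x>\varepsilon t,$ and to estimate $\int x u\,\d x$ on each.

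First I would treat the similarity zone. Here $u$ is oscillatory of size $t^{-1/2},$ while $\int_x^{+\infty} u\,\d\tilde x$ tends to the slowly varying background $\tfrac{-1}{\pi}\int_{-k_0}^{k_0}\ln(1-|R|^2)\,\d z.$ Integrating $x u$ against this envelope and discarding the rapidly oscillating part by stationary phase / Riemann--Lebesgue gives a contribution of order $t,$ in accordance with the heuristic ``$\int u=\mathcal{O}(1)\Rightarrow\int x u=\mathcal{O}(t)$''. By conservation its $t^1$ coefficient must equal $-H(\gamma),$ so that it cancels $H(\gamma)t$; tracking the next order leaves an $\mathcal{O}(1)$ remainder that feeds into $K(\gamma).$ In the Painlev\'e zone the substitution $x=(3t)^{1/3}s$ turns $x\,u\,\d x\sim s\,(p^2(s)-p'(s))\,\d s,$ so this zone contributes exactly at order $t^0$ the integral $\int s\,(p^2(s)-p'(s))\,\d s,$ which is the origin of the claimed $t^0$ Painlev\'e contribution to $K(\gamma).$ For $\gamma<1$ one has $R(0)>-1,$ $p$ decays and oscillates, and this integral converges.

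For $\gamma=1$ one has $R(0)=-1$ and $p(s)\sim\sqrt{-s}/\sqrt2,$ so $s(p^2-p')\sim\tfrac12 s^2-\tfrac{\sqrt{-s}}{2\sqrt2}+\tfrac1{8s}$ is non-integrable at $-\infty$ and the naive Painlev\'e integral diverges. The regularization should be forced by matching: the Painlev\'e zone no longer reaches $x\to-\infty$ but is cut off where it meets the elliptic zone, and the divergent tail $\int\left(\tfrac12 s^2-\tfrac{\sqrt{-s}}{2\sqrt2}+\tfrac1{8s}\right)\d s$ is precisely what the elliptic and similarity zones supply (note in particular that the elliptic zone, living on the scale $x\sim -t^{1/3}(\ln t)^{2/3},$ is the natural home of the $\tfrac1{8s}$ logarithmic piece). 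Subtracting these three explicit terms yields the convergent quantity $P$ of the remark, independent of the cut-off $s<0.$ Carrying out the matching so that the subtracted polynomial, half-power and logarithmic pieces are exactly absorbed by the neighbouring zones would establish $K(1)\asymp P,$ i.e.\ that $K(1)$ equals $P$ up to the finite constants contributed by those zones.

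The main obstacle is analytic control of the transitions between the four regions: the asymptotics of Remark \ref{remAsympKdV} are leading-order only and are stated \emph{away} from the gaps, whereas $K(\gamma)$ sits at order $t^0,$ subleading to the order-$t^1$ bulk, so its extraction requires \emph{uniform} next-to-leading asymptotics across the transition layers (Painlev\'e/elliptic near $x\sim -t^{1/3}(\ln t)^{2/3}$ and elliptic/similarity) together with the precise phase constants $\delta(\xi)$ and $\theta_0.$ These matching regions were, as the remark notes, not analysed in the literature, and it is there---rather than in the region-by-region integrals themselves---that the genuine difficulty lies. This is exactly why the relation is stated as a conjecture: the present plan reduces $K(1)\asymp P$ to a hard but in-principle-doable uniform Deift--Zhou analysis of the KdV long-time asymptotics that includes the gaps.
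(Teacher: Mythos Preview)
The statement is a \emph{Conjectural and non-rigorous Remark}: the paper does not supply a proof, and the ``argument'' consists only of the heuristic computations embedded in the remark itself (the parenthetical estimates $\int_{-\infty}^{-\varepsilon t} xu\,\d x=\mathcal{O}(t)$ and $\int_{-Ct^{1/3}}^{Ct^{1/3}} x\cdot(3t)^{-2/3}(p^2-p')\,\d x=\mathcal{O}(1)$, followed by the explicit definition of the regularized integral $P$). Your proposal follows exactly this heuristic line --- splitting into similarity, Painlev\'e, elliptic and soliton-free zones and reading off the orders --- so it is faithful to the paper's approach.

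Where you go beyond the paper is in the packaging: you invoke the conservation law in the sharper form $\int_{\mathbb{R}} x\,u(x,t)\,\d x = K(\gamma)-H(\gamma)t$, which forces the $t^1$ coefficient from the similarity zone to equal $-H(\gamma)$ automatically and isolates $K(\gamma)$ as the $t^0$ remainder; and you offer a mechanism for the regularization, namely that the divergent tail $\tfrac12 s^2-\tfrac{\sqrt{-s}}{2\sqrt2}+\tfrac1{8s}$ is absorbed by the elliptic and similarity zones via matching. The paper does neither of these explicitly --- it simply writes down $P$ and conjectures $K(1)\asymp P$. Your identification of the real obstacle (uniform next-to-leading asymptotics across the transition layers, which the paper itself flags as gaps not treated in the literature) is accurate and is precisely why the paper leaves this as a conjecture rather than a theorem.
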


\begin{rem}
A more practical way to compute $K(\gamma),$ $gamma\in[0,1],$ numerically is to do this at the time $t=0,$ by using  
the main integral equations of the inverse scattering problem (a.k.a Marchenko equations, Gelfand-Levitan-Marchenko equations) 
\cite[formulas (3.5.18), (3.5.18'), (3.5.21), 
%\footnote{the formula for $q^-$ below the formula (3.5.21) in \cite{Marchenko} should be with +, not with -.} 
p.290]{Marchenko}, which are Volterra integral equations.
For the spectral problem $-\partial_x^2\psi + u(x,0;\gamma)\psi = k^2\psi$ they are 
\[\begin{split}
\mathcal{K}_+(x,y) + \mathcal{R}_+(x+y)+\int\limits_{x}^{+\infty}\mathcal{K}_+(x,z)\mathcal{R}_+(z+y)\d z = 0,\quad y\geq x,\\
\mathcal{K}_-(x,y) + \mathcal{R}_-(x+y)+\int\limits^{x}_{-\infty}\mathcal{K}_-(x,z)\mathcal{R}_-(z+y)\d z = 0,\quad y\leq x,
\end{split}\]
where 
\[\mathcal{R}_+(x) = \frac{1}{2\pi}\int\limits_{-\infty}^{+\infty}R(k)\e^{\i k x}\d k=-\sqrt{\frac{\gamma}{\pi}}\e^{-x^2},
\qquad
\mathcal{R}_-(x) = \frac{1}{2\pi}\int\limits_{-\infty}^{+\infty}L(k)\e^{-\i k x}\d k,\]
and $R(k)=R(k;\gamma)$ is defined in \eqref{R}, and $L(k)=L(k;\gamma)$ is defined in \eqref{L}.
The link with the function $u(x)=u(x;\gamma)=u(x,0;\gamma)$ is given by the formulas 
\[\mathcal{K}_+(x,x) =\frac12 \int\limits_{x}^{+\infty} u (\tilde x)\d\tilde x,
\quad
\mathcal{K}_-(x,x) = \frac12\int\limits^{x}_{-\infty} u (\tilde x)\d\tilde x.\]
Then the integral \eqref{K}, computed at $t=0,$ can be treated as follows: for any real $x_0,$
\[\begin{split}K(\gamma) &= \int\limits_{-\infty}^{+\infty}x u(x,0;\gamma)\d x =
\int\limits_{-\infty}^{x_0}x u(x,0;\gamma)\d x + \int\limits_{x_0}^{+\infty}x u(x,0;\gamma)\d x = 
\\
&=
-\int\limits_{-\infty}^{x_0}(x_0-x) u(x,0;\gamma)\d x + \int\limits_{x_0}^{+\infty}(x-x_0) u(x,0;\gamma)\d x 
+x_0 \int\limits_{-\infty}^{+\infty}u(x)\d x
=
\\
&=
-\int\limits_{-\infty}^{x_0}\(\int_{-\infty}^{x}u(\tilde x,0;\gamma)\d\tilde x\)\d x +
\int\limits^{+\infty}_{x_0}\(\int^{+\infty}_{x}u(\tilde x,0;\gamma)\d\tilde x\)\d x
+x_0 \int\limits_{-\infty}^{+\infty}u(x,0;\gamma)\d x\\
&=
-\int\limits_{-\infty}^{x_0}2\mathcal{K}_-(x,x)\d x +\int\limits^{+\infty}_{x_0}2\mathcal{K}_+(x,x)\d x+
x_0 \cdot \sqrt{\frac{2}{\pi}}\;\mathrm{Li}_{\frac32}(\gamma),
\end{split}
 \]
 since 
 $\int\limits_{-\infty}^{+\infty}u(x,0;\gamma)\d x = 2T_1(\gamma) = \sqrt{\frac{2}{\pi}}\;\mathrm{Li}_{\frac32}(\gamma)$
 in view of formulas \eqref{qu} and asymptotics \eqref{qgamma}, \eqref{q1}.
\end{rem}

\begin{rem}
Function $q(x;\gamma)$ seems to be positive.
For $x\to-\infty,$ approximately
\[
\begin{split}
&
\int\limits_{x}^{+\infty} q^2(z;\gamma)\d z-2T_1(\gamma) \approx -q(x;\gamma) \(\frac{\e^{2x\kappa_{\gamma}}L_{-1}(\gamma)}{2\kappa_{\gamma}}\),\quad \gamma<1,
\\
&
\int\limits_{x}^{+\infty} q^2(z;1)\d z-2T_1(1) \approx -q(x;1).
\end{split}
\]
\end{rem}

\begin{rem}
Consider rarefaction problem for KdV,
$u\to c^2, x\to-\infty,\quad u\to 0, x\to+\infty.$
It has conserved quantities, (independent of $x$ and $t$)
\[
\begin{split}
&K = 3\int\limits_{-\infty}^x(u^2-c^4)\d\tilde x+3\int\limits_{x}^{+\infty}u^2\d\tilde x+3c^4 \(x+4c^2t\),
\\
&
H=\int\limits_{-\infty}^{x}z\(u(z,t)-c^2\)\d z
+\int\limits_{x}^{+\infty}zu(z,t)\d z+\frac{c^2}{2}x^2-6c^6t^2+Kt.
\end{split}\]
\end{rem}

\begin{Crem} Numerical experiment (based on section \ref{sect_num}) allows us to conjecture that
\begin{equation}\label{Lkappa}
\frac{L_{-1}(\gamma)}{2\kappa_{\gamma}} = 1 -L_1(1)\kappa_{\gamma}+l_2\kappa_{\gamma}^2 -l_3\kappa_{\gamma}^3+ \mathcal{O}(\kappa_{\gamma}^4),\quad \gamma\to1-0.
\end{equation}

Then formulas \eqref{q1} might be obtained from formulas \eqref{qgamma}
by taking formal limit $\kappa_{\gamma}\to0$, and neglecting terms of positive order in $\kappa_{\gamma}.$

Indeed, we get formally that for $\gamma\to 1-0,$
\[
q(x;\gamma)\sim 
\frac{2}{L_1(1)-2x}
+
\frac{(2l_2-L_1(1)^2)\kappa_{\gamma}}{(L_1(1)-2x)^2}
+\mathcal{O}\(\kappa_{\gamma}^3\).
\]
Numerics $l_2\approx 0.678\,838\,896\,877 \approx \frac12 L_1(1)^2$ suggest us to conjecture  $l_2 = \frac12 L_1(1)^2,$ 
and then we can simplify the expression for $\kappa_{\gamma}^2$  term:
\[
q(x;\gamma)
\sim
\frac{2}{L_1(1)-2x}
+
\frac{6l_3+2x(4x^2-6L_1(1) x+3L_1(1)^2)}
{3(L_1(1)-2x)^2}
\kappa_{\gamma}^2
+\mathcal{O}\(\kappa_{\gamma}^4\).
\]
Sweet life ends here: because of presence of $x,$ we can not make this term to be equal to $0.$
We have 
\[l_3 \approx 0.236\,014\,8731\ \neq \ \frac16L_1(1)^3\approx 0.263\,659\,741.\]
\end{Crem}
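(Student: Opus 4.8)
\emph{Sketch of a proof of \eqref{Lkappa}.} Write $\kappa=\kappa_\gamma$, so that $\gamma=\e^{-\kappa^2/2}$, and put $h(w):=\ln\frac{1-\e^{-w}}{w}$; this function is analytic for $|w|<2\pi$ (the logarithmic singularities of $\ln(1-\e^{-w})$ and of $-\ln w$ at $w=0$ cancel), with $h(0)=0$, and $h(w)=-\ln w+\mathcal{O}(\e^{-w})$ as $w\to+\infty$. The plan is to reduce $\frac{L_{-1}(\gamma)}{2\kappa}$ to a single explicit integral and to exploit its parity in $\kappa$. First, since $\ln(1-\gamma\e^{-s^2/2})=\ln\bigl(1-\e^{-(s^2+\kappa^2)/2}\bigr)$ is even in $s$, the contour integral in \eqref{T1} equals $\frac{\kappa}{\pi}\int_{-\infty}^{+\infty}\frac{\ln(1-\gamma\e^{-s^2/2})}{s^2+\kappa^2}\,\d s$. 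Writing $\ln\bigl(1-\e^{-(s^2+\kappa^2)/2}\bigr)=\ln\frac{s^2+\kappa^2}{2}+h\!\left(\frac{s^2+\kappa^2}{2}\right)$ and using $\frac1\pi\int_{\mathbb{R}}\frac{\d t}{1+t^2}=1$, $\frac1\pi\int_{\mathbb{R}}\frac{\ln(1+t^2)}{1+t^2}\,\d t=2\ln2$, the contribution of the first term is $2\ln\kappa+\ln2$ (after the scaling $s=\kappa t$), so that after cancelling the prefactor $\frac1{2\kappa}\cdot\frac1\kappa$ one gets the clean identity
\[
\frac{L_{-1}(\gamma)}{2\kappa}=\e^{J(\kappa)},\qquad
J(\kappa):=\frac{\kappa}{\pi}\int_{-\infty}^{+\infty}\frac{h\!\left(\frac{s^2+\kappa^2}{2}\right)}{s^2+\kappa^2}\,\d s
=\frac{\kappa}{\pi}\int_{0}^{+\infty}\frac{G(r+\kappa^2)}{\sqrt r}\,\d r ,
\]
where $G(v):=h(v/2)/v$.

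Second, I would establish that $J(\kappa)$ is an odd $C^\infty$ function of $\kappa$ near $0$. Here $G$ extends analytically to $v=0$ with $G(0)=-\frac14$, and $G^{(k)}(v)=\mathcal{O}\bigl(v^{-k-1}\ln v\bigr)$ as $v\to+\infty$; hence differentiation under the integral sign is legitimate to all orders and $\varepsilon\mapsto\int_0^{\infty}G(v+\varepsilon)v^{-1/2}\,\d v$ is smooth on $[0,\infty)$. Therefore $J(\kappa)=c_0\kappa+c_1\kappa^3+c_2\kappa^5+\cdots$ with $c_j$ explicit convergent integrals of derivatives of $G$, and expanding the exponential gives
\[
\frac{L_{-1}(\gamma)}{2\kappa}=1+c_0\kappa+\tfrac12 c_0^2\kappa^2+\bigl(c_1+\tfrac16 c_0^3\bigr)\kappa^3+\cdots .
\]
Comparing with \eqref{Lkappa} this already proves the relation $l_2=\frac12 l_1^2$ (matching the numerics), while it predicts $l_3=\frac16 l_1^3-c_1$, which is generically distinct from $\frac16 l_1^3$ precisely because $c_1\neq0$.

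Third, I would identify $c_0=\lim_{\kappa\to0}J(\kappa)/\kappa=\frac1\pi\int_{-\infty}^{+\infty}\frac{h(s^2/2)}{s^2}\,\d s$ with $-L_1(1)$. Using $\ln\!\left(\bigl[1-\e^{-s^2/2}\bigr]\frac{s^2+1}{s^2}\right)=h(s^2/2)+\ln(s^2+1)-\ln2$ in the formula for $L_1(1)$ from Theorem~\ref{teor1}, I would collapse the contour $\Sigma_{1/4}$ onto $\mathbb{R}$: this is allowed because $h(s^2/2)/s^2$ is analytic for $|s|<2\sqrt\pi$ with a removable singularity at $s=0$, while $\bigl(\ln(s^2+1)-\ln2\bigr)/s^2$ has a double pole with vanishing residue at $s=0$ and its branch point $s=-\i$ lies below $\Sigma_{1/4}$, so that the $\Sigma_{1/4}$–integral of the latter equals the Hadamard finite part of the real one; evaluating $\mathrm{f.p.}\int_{-\infty}^{+\infty}\frac{\ln\frac{s^2+1}{2}}{s^2}\,\d s=2\pi$ by one integration by parts yields $L_1(1)=2-\frac1\pi\int_{\mathbb{R}}\frac{h(s^2/2)}{s^2}\,\d s-2=-c_0$. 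Substituting the expansion of $L_{-1}(\gamma)/(2\kappa)$ into \eqref{qgamma} and a routine Taylor expansion in $\kappa$ (of $\e^{2x\kappa}$, $\e^{4x\kappa}$ and $L_{-1}$) then reproduces the displayed asymptotics of $q(x;\gamma)$ and of $\int_x^{+\infty}q^2$, and letting $\kappa\to0$ returns \eqref{q1}, confirming that \eqref{q1} is the formal $\kappa_\gamma\to0$ limit of \eqref{qgamma}.

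The hard part will be the analytic control of $J(\kappa)$ as $\kappa\to0$: the weight $r^{-1/2}$ together with the logarithmic growth $h(v/2)\sim-\ln v$ of the integrand at infinity has to be handled carefully to justify differentiating under the integral sign — and this is exactly what produces the crucial parity statement $J(\kappa)=c_0\kappa+c_1\kappa^3+\cdots$ and hence the identity $l_2=\frac12 l_1^2$ — together with the more routine but still delicate bookkeeping of the branch cuts of $h(s^2/2)$ and of $\ln(s^2+1)$ relative to $\Sigma_{1/4}$ when deforming that contour. Everything downstream of the identity $L_{-1}(\gamma)/(2\kappa_\gamma)=\e^{J(\kappa_\gamma)}$ is elementary.
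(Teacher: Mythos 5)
Your derivation is correct, and it does genuinely more than the paper: the statement you are proving is labelled a \emph{conjectural} remark and is supported in the text only by numerics (the closest the paper comes is item~6 of the Lemma in Section~\ref{sect_num}, which asserts the expansion through order $\kappa_\gamma^2$ but whose proof only establishes the limit $L_{-1}(\gamma)/(2\kappa_\gamma)\to1$, via the integral formula for $\ln\bigl(L_{-1}(\gamma)/(4\kappa_\gamma^3)\bigr)$ in the subsequent Remark). Your identity $L_{-1}(\gamma)/(2\kappa_\gamma)=\e^{J(\kappa_\gamma)}$ is a correct reorganization of the definition of $L_{-1}$ in \eqref{T1}: the Cauchy kernel $1/(s-\i\kappa)$ against the even function $\ln\bigl(1-\e^{-(s^2+\kappa^2)/2}\bigr)$ reduces to the Poisson kernel $\kappa/(s^2+\kappa^2)$; splitting off $\ln\frac{s^2+\kappa^2}{2}$ contributes exactly $2\ln\kappa+\ln2$ and cancels the prefactor $1/(2\kappa^2)$; and the remainder $J(\kappa)=\frac{\kappa}{\pi}\Psi(\kappa^2)$ with $\Psi$ smooth up to $\varepsilon=0$ (your bounds $G(0)=-\tfrac14$ and $G^{(k)}(v)=\mathcal{O}(v^{-k-1}\ln v)$ do suffice for differentiating under the integral) is odd in $\kappa$ to all orders. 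That parity is the real content: it forces $\ln\bigl(L_{-1}/(2\kappa)\bigr)$ to have no $\kappa^2$ term, hence $l_2=\tfrac12L_1(1)^2$, which upgrades the paper's numerical observation to a theorem, and it explains structurally why the $\kappa^3$ coefficient should differ from $\tfrac16L_1(1)^3$ (the obstruction being $c_1$). Your identification $c_0=-L_1(1)$ also checks out: the double pole of $\ln\frac{s^2+1}{2}\cdot s^{-2}$ at $s=0$ has zero residue, $\int_{\Sigma_{1/4}}s^{-2}\,\d s=0$, and $\int_{\mathbb{R}}\ln(1+s^2)s^{-2}\,\d s=2\pi$ by one integration by parts, giving $L_1(1)=2-c_0-2=-c_0$. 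Two caveats worth recording: you do not show $c_1\neq0$ analytically, so the inequality $l_3\neq\tfrac16L_1(1)^3$ remains a numerical observation exactly as in the paper; and the displayed asymptotics of $q(x;\gamma)$ are still only formal, since neither your argument nor the paper's justifies interchanging the limits $\kappa_\gamma\to0$ and $x\to-\infty$ in \eqref{qgamma} (the error terms there are not claimed uniform in $\gamma$).
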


\begin{Crem}
It seems that $M(\gamma)\to M(1)$ as $\gamma\to1-0.$
Indeed, splitting the integral for $M(\gamma),$ $\gamma<1$ into two parts $(-\infty,x)$ and $(x,+\infty)$ for $x$ sufficiently large negative, and substituting asymptotics \eqref{qgamma}, we find
\[\gamma<1: 
\int\limits_{-\infty}^x
\tilde x q^2(\tilde x;\gamma)\d\tilde x\sim
\frac{4x\kappa \frac{L_{-1}(\gamma)^2\e^{4x\kappa}}{4\kappa_{\gamma}^2}}
{1-\frac{L_{-1}(\gamma)^2\e^{4x\kappa}}{4\kappa_{\gamma}^2}}
+\ln\(1-\frac{L_{-1}(\gamma)^2\e^{4x\kappa}}{4\kappa_{\gamma}^2}\),
\]
which in the $\kappa_{\gamma}\to0$ limit gives, using \eqref{Lkappa},
\[
\frac{2x}{L_1(1)-2x}+\ln(L_1(1)-2x)+\ln(2\kappa_{\gamma})+\mathcal{O}(\kappa_{\gamma}).
\]
On the other hand,
\[
\gamma=1:\int\limits_{-\infty}^x\(\tilde xq^2(\tilde x;1)-\frac{1}{\tilde x}\)\d \tilde x
\sim
\frac{2x}{L_1(1)-2x}+\ln(L_1(1)-2x)+1-\ln2-\ln|x|.
\]
Comparing, we come to a formal conclusion that $M(1-0)=M(1).$
\end{Crem}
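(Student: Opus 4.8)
The plan is to prove $M(1-0)=M(1)$ by a matched splitting of the first moment, using that both quantities are $t$-independent (Lemma \ref{lem_cons}), so everything may be evaluated at $t=0$. Since $\kappa_\gamma=\sqrt{-2\ln\gamma}$ gives $|\ln\gamma|=\kappa_\gamma^2/2$, the counterterm in \eqref{cons} becomes $-\tfrac12\ln|\ln\gamma|=-\ln\kappa_\gamma+\tfrac12\ln2$. Fixing an auxiliary point $x_0<0$ and splitting the integral at $x_0$, I would write, at $t=0$,
\[
M(\gamma)=A(\gamma,x_0)+B(\gamma,x_0)-\ln\kappa_\gamma-\tfrac12\ln2,\qquad
M(1)=\widehat A(x_0)+B(1,x_0)+\ln|x_0|+\tfrac32\ln2-1,
\]
where $A(\gamma,x_0)=\int_{-\infty}^{x_0}\tilde x\,q^2(\tilde x;\gamma)\,\d\tilde x$, $\widehat A(x_0)=\int_{-\infty}^{x_0}(\tilde x\,q^2(\tilde x;1)-\tilde x^{-1})\,\d\tilde x$, and $B(\gamma,x_0)=\int_{x_0}^{+\infty}\tilde x\,q^2(\tilde x;\gamma)\,\d\tilde x$. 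A short bookkeeping then shows it suffices to prove $B(\gamma,x_0)\to B(1,x_0)$ together with $\lim_{\gamma\to1-0}[A(\gamma,x_0)-\ln\kappa_\gamma]=\widehat A(x_0)+\ln|x_0|+2\ln2-1$, since substituting these into the two displays makes $M(\gamma)-M(1)\to0$.

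The tail limit $B(\gamma,x_0)\to B(1,x_0)$ is the easy part: by Theorem \ref{teor1}(b) the integrand converges pointwise, and a $\gamma$-uniform exponentially decaying majorant on $[x_0,+\infty)$ (valid for $\gamma$ in a left-neighbourhood of $1$) gives dominated convergence. For the heads I would decompose each into an explicit profile part plus an error. Writing $p_\gamma(x)$ for the first summand on the right of \eqref{qgamma} and $p_1(x)=\tfrac{2}{L_1(1)-2x}$ for the profile in \eqref{q1}, and setting $w=w(x;\gamma)=\tfrac{L_{-1}(\gamma)^2\e^{4x\kappa_\gamma}}{4\kappa_\gamma^2}$, one verifies by differentiation that $\tfrac{4x\kappa_\gamma w}{1-w}+\ln(1-w)$ is an antiderivative of $x\,p_\gamma^2(x)$ and that $\tfrac{2x}{L_1(1)-2x}+\ln(L_1(1)-2x)-\ln|x|$ is an antiderivative of $x\,p_1^2(x)-x^{-1}$. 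The two error integrals $A^{\mathrm{err}}(\gamma,x_0)=\int_{-\infty}^{x_0}\tilde x(q^2-p_\gamma^2)\,\d\tilde x$ and $\widehat A^{\mathrm{err}}(x_0)=\int_{-\infty}^{x_0}\tilde x(q^2(\cdot;1)-p_1^2)\,\d\tilde x$ converge to a common value as $\gamma\to1-0$: this uses Theorem \ref{teor1}(b) together with the pointwise convergence $p_\gamma\to p_1$ and the $\gamma$-uniform bound $|\tilde x(q^2-p_\gamma^2)|\le C|\tilde x|\e^{-C|\tilde x|}$. Hence the error parts cancel between the two heads, and only the explicit antiderivatives contribute to the mismatch.

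It remains to track the profile parts. For $\gamma<1$ the lower endpoint contributes nothing ($w\to0$ as $\tilde x\to-\infty$), so $A^{\mathrm{prof}}(\gamma,x_0)=\tfrac{4x_0\kappa_\gamma w_0}{1-w_0}+\ln(1-w_0)$ with $w_0=w(x_0;\gamma)$. Inserting the first two terms of \eqref{Lkappa}, namely $\tfrac{L_{-1}(\gamma)}{2\kappa_\gamma}=1-L_1(1)\kappa_\gamma+\mathcal{O}(\kappa_\gamma^2)$, gives $w_0=1-2(L_1(1)-2x_0)\kappa_\gamma+o(\kappa_\gamma)$, whence $\tfrac{4x_0\kappa_\gamma w_0}{1-w_0}\to\tfrac{2x_0}{L_1(1)-2x_0}$ and $\ln(1-w_0)=\ln\kappa_\gamma+\ln2+\ln(L_1(1)-2x_0)+o(1)$; the $-\ln\kappa_\gamma$ cancels, leaving $\lim_{\gamma\to1-0}[A^{\mathrm{prof}}(\gamma,x_0)-\ln\kappa_\gamma]=\tfrac{2x_0}{L_1(1)-2x_0}+\ln(L_1(1)-2x_0)+\ln2$. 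For $\gamma=1$ the lower endpoint contributes $-1+\ln2$, giving $\widehat A^{\mathrm{prof}}(x_0)=\tfrac{2x_0}{L_1(1)-2x_0}+\ln(L_1(1)-2x_0)-\ln|x_0|+1-\ln2$. Since the error parts have cancelled, subtracting the profile parts produces exactly $\ln|x_0|+2\ln2-1$, the required identity; the outcome is independent of $x_0$, as a one-line differentiation in $x_0$ confirms (using \eqref{q1}), consistently with Lemma \ref{lem_cons}. Thus $M(1-0)=M(1)$.

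The main obstacle is the input \eqref{Lkappa}, at present only read off numerically: the argument uses its first two coefficients both for the pointwise limit $p_\gamma\to p_1$ (error cancellation) and for the finite part of $\ln(1-w_0)$, and the identification of the $\kappa_\gamma$-coefficient with $-L_1(1)$ is precisely what makes the finite parts match. To establish it I would analyse the Cauchy-type integral
\[
\frac{L_{-1}(\gamma)}{2\kappa_\gamma}=\frac{1}{2\kappa_\gamma^2}\exp\left[\frac{1}{\pi\i}\int\limits_{-\infty}^{+\infty}\frac{\ln(1-\gamma\e^{-s^2/2})\,\d s}{s-\i\kappa_\gamma}\right]
\]
as $\kappa_\gamma\to0$. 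The difficulty is a near-threshold coalescence: the pole $s=\i\kappa_\gamma$ pinches against the logarithmic singularity that $\ln(1-\gamma\e^{-s^2/2})$ develops at $s=0$, where $1-\gamma\sim\kappa_\gamma^2/2$. Deforming the contour onto $\Sigma_{1/4}$ and extracting the small-$\kappa_\gamma$ expansion should generate the factor $\kappa_\gamma^2$ that cancels the prefactor (yielding the leading $1$) and identify the next coefficient with the $\Sigma_{1/4}$-integral defining $L_1(1)$ in Theorem \ref{teor1}(d); this singular-integral asymptotic is the crux. The $\gamma$-uniform $\e^{-C|x|}$ control needed in the error-cancellation step can be obtained in parallel from a Deift--Zhou analysis of RHP \ref{RightRHP} carried out uniformly for $\gamma$ in a left-neighbourhood of $1$. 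An alternative, spectral, route writes the conserved quantity $M(\gamma)$ as a trace formula in the scattering data with an explicit contribution from the near-threshold pole at $k=\i\kappa_\gamma$, whose regularized continuity as the pole reaches $k=0$ reduces to the same coalescence analysis.
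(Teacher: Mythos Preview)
Your proposal follows essentially the same route as the paper's own (formal) argument in the Crem: split the first moment at a large negative point, replace $q$ by its explicit asymptotic profile from \eqref{qgamma} (resp.\ \eqref{q1}), integrate via the same antiderivatives $\tfrac{4x\kappa w}{1-w}+\ln(1-w)$ and $\tfrac{2x}{L_1(1)-2x}+\ln(L_1(1)-2x)-\ln|x|$, and then match the constants by invoking the conjectural expansion \eqref{Lkappa}. Your bookkeeping is more explicit than the paper's, and you correctly isolate \eqref{Lkappa} (and the $\gamma$-uniform control of the $\e^{-C|x|}$ errors) as exactly the inputs that would be needed to turn the formal comparison into a proof.
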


\textbf{Acknowledgments.}
A.M. acknowledges the support of the H2020-MSCA-RISE-2017 PROJECT No. 778010 IPADEGAN, and the support of the organizers of 
\href{https://www.chairejeanmorlet.com/2019-1-grava-bufetov-2104.html} {the conference }
`Integrability and Randomness in Mathematical Physics and Geometry', April 8--12, 2019, CIRM (Marseille, Luminy, France), and Thomas Bothner, from where he learned about the problem.
Also A.M. would like to thank \href{https://www.uva.nl/en/profile/g/a/o.gamayun/o.gamayun.html}{Oleksandr Gamayun}, who pointed out that the quntity \eqref{intq1} can be found from the solution of the RHP by expanding it at the origin, and to \href{http://www.math.sissa.it/users/pieter-roffelsen}{Pieter Roffelsen} for useful remarks.

\section{Proof of (a),(b)}
\begin{lemma}
\begin{enumerate}
\item For any fixed $\gamma\in[0,1],$ $x\in\mathbb{R},$
the Riemann-Hilbert problem \ref{RightRHP} has the unique solution. This solution is continuous in parameters $(x;\gamma)\in\mathbb{R}\times[0,1].$
\item For any $\gamma\in[0,1]$ and $x\in\mathbb{R},$ the solution of the RHP \ref{RightRHP} is infinitely differentiable in $x.$
\end{enumerate}
\end{lemma}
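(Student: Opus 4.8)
The plan is to establish existence, uniqueness, continuity in $(x;\gamma)$, and smoothness in $x$ for the solution $\M$ of RHP \ref{RightRHP}, all via the standard reduction of a Riemann--Hilbert problem to a singular integral equation on $\mathbb{R}$, combined with the vanishing-lemma (Zhou) argument. First I would note that the jump matrix $\J_\M$ is Hermitian on $\mathbb{R}$ and of the form $\1 + $ (off-diagonal and a rank-one negative-definite perturbation), namely $\J_\M = \begin{pmatrix} 1 & \ol{r}\e^{-2\i\widehat\theta} \\ -r\e^{2\i\widehat\theta} & 1-|r|^2\end{pmatrix}$ with $r = R(k;\gamma) = -\sqrt\gamma\,\e^{-k^2/4}$; for $\gamma\in[0,1)$ one has $|r(k)|<1$ for all real $k$, while for $\gamma=1$ one has $|r(0)|=1$ but $|r(k)|<1$ for $k\neq 0$. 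Because $r$ decays like a Gaussian, $\J_\M - \1$ lies in $L^2(\mathbb{R})\cap L^\infty(\mathbb{R})$ and in fact in every $L^p$. Writing $\M_\pm = \1 + \mathcal{C}_\pm(\mu w)$ for the Cauchy projectors $\mathcal{C}_\pm$ and an appropriate factorization $\J_\M = (b_-)^{-1} b_+$ with $w = (w_-, w_+)$, $w_\pm = \pm(b_\pm - \1)$, the RHP is equivalent to $(\I - \mathcal{C}_w)\mu = \1$, where $\mathcal{C}_w f = \mathcal{C}_+(f w_-) + \mathcal{C}_-(f w_+)$ is a bounded operator on $L^2(\mathbb{R})$.

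The core analytic input is that $\I - \mathcal{C}_w$ is invertible on $L^2(\mathbb{R})$. This is the step I expect to be the main obstacle, especially at $\gamma=1$. For $\gamma<1$ one can use the Hermiticity and positivity $\J_\M + \J_\M^\dagger = 2\,\mathrm{diag}(\ldots)$ with $1-|r|^2>0$ bounded away from $0$ on compacts, together with the symmetry $\M(\bar k)^\dagger = \M(k)^{-1}$-type relations, to run Zhou's vanishing lemma: any solution of the homogeneous RHP (with zero at infinity) must vanish identically because the contour integral $\int_{\mathbb{R}} \M_+(k)(\J_\M + \J_\M^\dagger)\M_+(k)^\dagger\,\d k$ is both zero (by analyticity and decay) and a sum of nonnegative terms. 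Since $\mathcal{C}_w$ is compact (as $w\in L^2$ with Gaussian decay, $\mathcal{C}_w$ is Hilbert--Schmidt after a small-norm truncation, or one truncates and uses that the tails have small operator norm), Fredholm theory plus triviality of the kernel gives invertibility. At $\gamma=1$ the factor $1-|r|^2$ vanishes quadratically at $k=0$; here I would either (i) invoke the symmetry $\J_\M = \1$ is replaced by noting $\det\J_\M = 1-|r|^2 \geq 0$ still, and the vanishing-lemma integrand is still pointwise nonnegative, vanishing only at $k=0$, so a homogeneous solution $\M_+$ must vanish on $\mathbb{R}\setminus\{0\}$ hence (by continuity and Morera) identically; or (ii) cite the known solvability for this specific $r$ from \cite{BB18}. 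Continuity of $\mu$, hence $\M$, in $(x;\gamma)\in\mathbb{R}\times[0,1]$ then follows because $(x,\gamma)\mapsto w(\,\cdot\,;x,\gamma) = \pm(b_\pm-\1)$ is continuous from $\mathbb{R}\times[0,1]$ into $L^2(\mathbb{R})\cap L^\infty(\mathbb{R})$ — the only $(x,\gamma)$-dependence is through $\e^{\pm 2\i\widehat\theta(x,t;k)}$ and $\sqrt\gamma$, both continuous — so $\mathcal{C}_w$ depends continuously in operator norm and $(\I-\mathcal{C}_w)^{-1}$ is continuous by the resolvent identity; uniform invertibility on compact $(x,\gamma)$-sets comes from compactness plus continuity of the (trivial) kernel.

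For part (2), smoothness in $x$, I would differentiate the integral equation: the map $x\mapsto w(\,\cdot\,;x)$ is $C^\infty$ from $\mathbb{R}$ into $L^2\cap L^\infty$ because $\partial_x^n \e^{2\i\widehat\theta} = (2\i k)^n \e^{2\i\widehat\theta}$ and $k^n r(k)$ is still Gaussian-decaying, hence still in $L^2$. Therefore $x\mapsto \mathcal{C}_w$ (equivalently $x\mapsto \mathcal{C}_{w(\cdot;x)}$) is $C^\infty$ as an operator-valued map, so $\mu(x) = (\I - \mathcal{C}_{w(x)})^{-1}\1$ is $C^\infty$ in $x$ by repeated use of the identity $\partial_x (\I-\mathcal{C}_w)^{-1} = (\I-\mathcal{C}_w)^{-1}(\partial_x \mathcal{C}_w)(\I-\mathcal{C}_w)^{-1}$; each derivative stays in $L^2$. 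Finally, $\M(x,t;k;\gamma) = \1 + \frac{1}{2\pi\i}\int_{\mathbb{R}} \frac{(\mu w)(s)}{s-k}\,\d s$ and the large-$k$ expansion coefficient $q(x;\gamma) = -2\i\lim_k k(\M-\1)_{12} = \frac{\i}{\pi}\int_{\mathbb{R}}(\mu w)_{12}(s)\,\d s$ inherits $C^\infty$-dependence on $x$ (and continuity in $\gamma$) by differentiating under the integral sign, the integrand and all its $x$-derivatives being dominated by Gaussian-decaying $L^1$ functions uniformly on compacts. This proves (a) and (b).
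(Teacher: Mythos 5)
Your proposal follows essentially the same route as the paper: reduce the RHP to a singular integral equation on $L^2(\mathbb{R})$, obtain invertibility of the singular integral operator from the Hermitian positivity of the jump matrix (the paper simply cites Zhou's Theorem 9.3, which packages exactly your vanishing-lemma-plus-Fredholm argument, your more careful treatment of the degeneracy of $1-|R|^2$ at $k=0$ for $\gamma=1$ included), and then get continuity in $(x;\gamma)$ and $C^\infty$ dependence on $x$ by differentiating the integral equation, using that $k^nR(k)$ remains Gaussian-decaying. The one inaccuracy is the parenthetical claim that $\mathcal{C}_w$ is compact or Hilbert--Schmidt --- it is not; one only gets that $\I-\mathcal{C}_w$ is Fredholm of index zero (via rational approximation of $w$ plus small-norm tails, which is precisely what Zhou's theorem supplies) --- but this does not affect the validity of your argument.
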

\begin{proof}
The proof is almost word-to-word repetition of the similar proof from \cite{KM19}, p. 13-17 (for the existence part also cf \cite{BB18}).
For the convenience of the reader we present it also here.

\textbf{Existence.}
Let $x\in\mathbb{R}$ and $\gamma\in[0,1]$ be fixed. We look for the solution $\M(x;k;\gamma)$ of the RHP \ref{RightRHP} in the form:
\begin{equation}\label{M_int}\M(x;k;\gamma) = \1+\frac{1}{2\pi\i}\int\limits_{\mathbb{R}}\frac{[\1+\Z(x;s;\gamma)][\1-\J_M(x;s;\gamma)]\d s}{s-k},\quad s\in\mathbb{C}\setminus\mathbb{R}.\end{equation}
One can show that the Cauchy integral \eqref{M_int} satisfies all the properties of the RHP if and only if the matrix $\Z(x;k;\gamma)$ satisfies the singular integral equation
\begin{equation}\label{SIE_Z}
\Z(x;s;\gamma) - \mathcal{K}[\Z](x;s;\gamma) = \mathbf{F}(x;s;\gamma),
\quad s\in\mathbb{R}.
\end{equation}
The singular integral operator $\mathcal{K}$ and the right-hand side $F(x;k;\gamma)$ are as follows:
\[\mathcal{K}[\Z](x;s;\gamma) = \frac{1}{2\pi\i}\int\limits_{\mathbb{R}}\frac{\Z(x;z;\gamma)[\1-\J_M(x;z;\gamma)]}{(z-s)_+}\d z,\]
\[
\mathbf{F}(x;s;\gamma) = \frac{1}{2\pi\i}\int\limits_{\mathbb{R}}\frac{\1-\J_M(x;s;\gamma)}{(z-s)_+} \d z.
\]
We consider this integral equation in the space $L^2(\mathbb{R})$ of $2\times2$ matrix complex-valued functions $\Z(k) := \Z(x;k;\gamma).$ The operator $\mathcal{K}$ is defined by the jump matrix $\J_M(x;k;\gamma)$ and the generalized function $\dfrac{1}{(z-s)_+} = \lim\limits_{k\to s, k\in+\mbox{side}}\dfrac{1}{z-k}.$

It is a classical fact that the Cauchy operator \[C_+[f](s) = \frac{1}{2\pi\i}\int\limits_{\mathbb{R}}\frac{f(z)}{(z-s)_+}\d z\] is bounded in the space $L^2(\mathbb{R}).$

The matrix-valued function $\1-\J_M(x;k;\gamma)$ as a function of variable $k$ is in the space $L^2(\mathbb{R}).$ Hence, the function $\mathbf{F}(x;k;\gamma)$ is also in $L^2(\mathbb{R}).$ The matrix-valued function $\1-\J_M(x;k;\gamma)$ is bounded as a function of variable $k:$
$\1-\J_M(x;k;\gamma)\in L^{\infty}(\mathbb{R}).$ Thus, operator $Id-\mathcal{K}$ is an operator acting in $L^2(\mathbb{R})$ ($Id$ is the identical operator). The contour $\mathbb{R}$ and the jump matrix $\J_M(x;k;\gamma)$ satisfy the Schwartz reflection principle \cite[Theorem 9.3]{Zhou89}:
\begin{itemize}
\item the contour $\mathbb{R}$ is symmetric with respect to the real axis $\mathbb{R};$
\item on parts of the contour outside of the real line, with orientation respecting the symmetry w.r.t. $\mathbb{R},$ we have 
$\J_M(x;k;\gamma) = \ol{\J_M(x;\ol k;\gamma)}^T;$
\item the jump matrix $\J_M(x;k;\gamma)$ has a positive definite real part for $k\in \mathbb{R}.$
\end{itemize}

In our case the contour coincides with the real axis, and hence the second condition of the Schwartz reflection principle is trivial in our case.

Then Theorem 9.3 from \cite{Zhou89} (p.984) guarantees the $L^2$ invertibility of the operator $Id -\mathcal{K}.$
Therefore, the singular integral equation \eqref{SIE_Z} has a unique solution $\Z(x;k;\gamma)\in L^2(\mathbb{R})$ for any fixed $x\in\mathbb{R},$ $\gamma\in[0,1]$ and formula \eqref{M_int} gives the solution of the above RHP.

The operator $Id-\mathcal{K}$ depends continuously on the parameters $(x;\gamma)\in\mathbb{R}\times[0,1].$ Therefore the inverse operator $(Id-\mathcal{K})^{-1}$  also has this property. Hence, the solution $\Z(x;k;\gamma)$ of the singular integral equation \eqref{SIE_Z} also depends continuously on $x,\gamma.$ From representation \eqref{M_int} we obtain the required statement for $\M(x;k;\gamma).$

\textbf{Uniqueness.} The uniqueness for the RHP \ref{RightRHP} in the space $L^2(\mathbb{R})$ is proved in \cite{Deift} (p.194-198).

\textbf{Smoothness.}
We can differentiate the singular integral equation \ref{SIE_Z} in $x$ as many times as desired. Indeed, to differentiate this equation and matrix $\Z$ it is sufficient that its formal derivatives are convergent. The function $\1-\J_M(x;s;gamma)$ is responsible for decaying of integrands in the singular integral equation. Since on the real line $\1-\J_M(x;s;\gamma)$ decays exponentially fast w.r.t. $s\to\pm\infty.$ 
Singular integral equations obtained from \eqref{SIE_Z} by differentiation w.r.t. $x$ are of the same form as the original one \eqref{SIE_Z}, ony the r.h.s. of these equations vary.
Indeed, writing \eqref{SIE_Z} in the form
\[\Z(x;s;\gamma) - C_+[\Z(x;s;\gamma)(\1-\J_M(x;s;\gamma))] = \mathbf{F}(x;s;\gamma),\]
for its formal derivative w.r.t. $x$ we get
\[\Z_x(x;s;\gamma) - C_+[\Z_x(x;s;\gamma)(\1-\J_M(x;s;\gamma))] = \mathbf{F}_1(x;s;\gamma):=\mathbf{F}_x(x;s;\gamma)-
C_+[\Z(x;s;\gamma)\J_{M,x}(x;s;\gamma)],\]
and so on for higher derivatives.

Since the left-hand-side operator is the same as in \eqref{SIE_Z}, it is invertible, and this provides a unique solvability and existence of the partial derivatives of $\Z(x;s;\gamma)$ with respect to $x.$ Hence, the same is true for $\Z(x;k;\gamma).$
\end{proof}

\begin{lemma}\label{lem_sym}
The solution $\M(x;k;\gamma)$ of the RHP \ref{RightRHP} has the symmetries
\[
\begin{split}
&\M(x;k;\gamma)=
\begin{pmatrix}0 & 1 \\ 1 & 0\end{pmatrix}
\ol{\M(x;\ol k;\gamma)}
\begin{pmatrix}0 & 1 \\ 1 & 0\end{pmatrix}
,
\\
&\M(x;k;\gamma)=
\begin{pmatrix}0 & 1 \\ 1 & 0\end{pmatrix}
\M(x; -k;\gamma)
\begin{pmatrix}0 & 1 \\ 1 & 0\end{pmatrix}.
\end{split}\]
\end{lemma}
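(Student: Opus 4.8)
The plan is to derive both identities from the uniqueness of the solution of RHP~\ref{RightRHP} established in the preceding lemma: for each of the two involutions $k\mapsto\bar k$ and $k\mapsto-k$ I will check that the matrix obtained from $\M$ by conjugating its transform with $\sigma_1:=\begin{pmatrix}0&1\\1&0\end{pmatrix}$ again solves RHP~\ref{RightRHP}, and then uniqueness forces it to equal $\M$.

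For the first symmetry put $\widetilde\M(x;k;\gamma):=\sigma_1\,\overline{\M(x;\bar k;\gamma)}\,\sigma_1$. Since $k\mapsto\overline{\M(x;\bar k;\gamma)}$ is analytic in $\mathbb{C}\setminus\mathbb{R}$ and continuous up to $\mathbb{R}$ whenever $\M$ is, and since conjugation by the constant matrix $\sigma_1$ preserves analyticity as well as the normalization $\sigma_1\1\sigma_1=\1$ at $k=\infty$, only the jump relation needs attention. The involution $k\mapsto\bar k$ exchanges the two boundary values on $\mathbb{R}$, so $\widetilde\M_\pm(k)=\sigma_1\,\overline{\M_\mp(k)}\,\sigma_1$, and substituting $\M_-=\M_+\J_\M$ one sees that $\widetilde\M_-=\widetilde\M_+\J_\M$ holds on $\mathbb{R}$ if and only if $\sigma_1\,\overline{\J_\M(x,t;k;\gamma)}\,\sigma_1=\J_\M(x,t;k;\gamma)^{-1}$ for $k\in\mathbb{R}$. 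This last identity is a one-line verification: $R(k;\gamma)$ is real, $\widehat\theta(x,t;k)=kx+4k^3t$ is real for real $k$, and $\det\J_\M\equiv1$, whence both sides equal $\begin{pmatrix}1-R^2 & -R\e^{-2\i\widehat\theta}\\ R\e^{2\i\widehat\theta} & 1\end{pmatrix}$.

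For the second symmetry put $\widehat\M(x;k;\gamma):=\sigma_1\,\M(x;-k;\gamma)\,\sigma_1$. Again analyticity in $\mathbb{C}\setminus\mathbb{R}$ and the normalization at $k=\infty$ are immediate, the involution $k\mapsto-k$ again exchanges the two sides of $\mathbb{R}$, so $\widehat\M_\pm(k)=\sigma_1\,\M_\mp(-k)\,\sigma_1$, and the jump relation reduces to $\sigma_1\,\J_\M(x,t;-k;\gamma)\,\sigma_1=\J_\M(x,t;k;\gamma)^{-1}$ for $k\in\mathbb{R}$. This holds because $R(k;\gamma)=-\sqrt{\gamma}\,\e^{-k^2/4}$ is even in $k$ while $\widehat\theta(x,t;k)$ is odd in $k$, so that $\e^{2\i\widehat\theta(x,t;-k)}=\e^{-2\i\widehat\theta(x,t;k)}$, together again with $\overline{R}=R$ and $\det\J_\M\equiv1$. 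In both cases the transformed matrix solves RHP~\ref{RightRHP}, so by uniqueness $\widetilde\M=\M$ and $\widehat\M=\M$, which are exactly the two claimed symmetries.

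The argument involves no real obstacle; the only point requiring a bit of care is the bookkeeping of which boundary value ($+$ or $-$) is sent to which under the two involutions — both reverse the left-to-right orientation of $\mathbb{R}$ and hence swap the $\pm$ sides — so I would state this explicitly before writing down the transformed jump relations.
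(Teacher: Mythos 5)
Your proof is correct and follows the same route as the paper: the paper's own proof simply records the two jump-matrix identities $\sigma_1\,\ol{\J_\M(x;\ol k;\gamma)}\,\sigma_1=\J_\M(x;k;\gamma)^{-1}$ and $\sigma_1\,\J_\M(x;-k;\gamma)\,\sigma_1=\J_\M(x;k;\gamma)^{-1}$ (with $\sigma_1=\begin{pmatrix}0&1\\1&0\end{pmatrix}$) and invokes uniqueness, exactly as you do, only without spelling out the verification. One small wording nit: complex conjugation fixes $\mathbb{R}$ pointwise rather than reversing its orientation, but your operative conclusion --- that both involutions exchange the $\pm$ boundary values because they swap the upper and lower half-planes --- is the right one.
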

\begin{proof}
This  follows from the corresponding symmetries for the jump matrix,
\[
\begin{split}
&\J_M(x;k;\gamma)^{-1}=
\begin{pmatrix}0 & 1 \\ 1 & 0\end{pmatrix}
\ol{\J_M(x;\ol k;\gamma)}
\begin{pmatrix}0 & 1 \\ 1 & 0\end{pmatrix}
,
\\
&\J_M(x;k;\gamma)^{-1}=
\begin{pmatrix}0 & 1 \\ 1 & 0\end{pmatrix}
\J_M(x; -k;\gamma)
\begin{pmatrix}0 & 1 \\ 1 & 0\end{pmatrix}.
\end{split}\]
\end{proof}

\begin{lemma}
Let $\mathbf{\Phi}(x;k;\gamma) = \M(x;k;\gamma)\e^{\i k x\sigma_3}.$
Then $\mathbf{\Phi}(x;k;\gamma)$ satisfies the equation
\[\mathbf{\Phi}_x(x;k;\gamma) + \i k  \s_3 \mathbf{\Phi}(x;k;\gamma) = \mathbf{Q}(x;\gamma)\mathbf{\Phi}(x;k;\gamma),\quad x\in\mathbb{R},\ \gamma\in[0,1],\]
where \[\mathbf{Q}(x;\gamma) = \begin{pmatrix} 0 &  -q(x;\gamma) \\  -q(x;\gamma) & 0\end{pmatrix},\]
with the function $q(x;\gamma)$ given by
\[q(x;\gamma) = -2\i \lim\limits_{k\to\infty}
k[\M(x;k;\gamma)]_{12} = 
\frac{-1}{\pi}\int\limits_{\mathbb{R}}
\([\1+\M(x;s;\gamma)][\J_M(x;s;\gamma)-\1]\)_{12}\d s.
\]
\end{lemma}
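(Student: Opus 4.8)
This is the standard ``dressing'' argument. The point is that RHP~\ref{RightRHP} is normalised to $\1$ at $k=\infty$ and its jump $\J_M$ depends on $x$ only through the plane--wave factors $\e^{\pm2\i kx}$; once these are gauged away, the resulting function has a logarithmic $x$--derivative that is a polynomial in $k$ of degree one, and the degree--zero part of that polynomial is $\mathbf{Q}$. Concretely: (i) establish the large--$k$ expansion $\M(x;k;\gamma)=\1+\M_1(x;\gamma)k^{-1}+o(k^{-1})$ with $\M_1\in C^\infty(\mathbb R)$, together with $\M_x(x;k;\gamma)=O(k^{-1})$, uniformly for $k$ bounded away from $\mathbb R$ and $x$ in compacta; (ii) note $\det\M\equiv1$; (iii) show $\mathbf{\Phi}$ has an $x$--independent jump, so that $\mathbf{\Phi}_x\mathbf{\Phi}^{-1}$ is entire in $k$; (iv) combine the growth at infinity with Liouville's theorem to pin down $\mathbf{\Phi}_x\mathbf{\Phi}^{-1}$ and read off $\mathbf{Q}$.

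For step~(i) I would differentiate and expand the Cauchy integral \eqref{M_int}. Since $R(k;\gamma)=-\sqrt\gamma\,\e^{-k^2/4}$, the matrix $\1-\J_M(x;s;\gamma)$ together with all of its $x$--derivatives decays on $\mathbb R$ faster than any power of $s$, uniformly for $x$ in compacta and for every $\gamma\in[0,1]$ (including $\gamma=1$, since $|R|^2=\gamma\,\e^{-s^2/2}$); and, by the preceding lemma, $\Z$ and all its $x$--derivatives lie in $L^2(\mathbb R)$. Hence $[\1+\Z][\1-\J_M]$ and each of its $x$--derivatives is integrable against every power $s^j$, so substituting $\tfrac1{s-k}=-\sum_{j=0}^{n-1}s^jk^{-j-1}+O(k^{-n-1})$ into \eqref{M_int} produces the full asymptotic series $\M-\1\sim\sum_{j\ge1}\M_jk^{-j}$ with coefficients in $C^\infty(\mathbb R)$ (differentiation under the integral, justified by dominated convergence), and the same estimate applied after first differentiating \eqref{M_int} in $x$ gives $\M_x=O(k^{-1})$. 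Reading off the coefficient of $k^{-1}$, and using that $\1+\Z$ equals the boundary value of $\M$ on $\mathbb R$, identifies $-2\i(\M_1)_{12}$ with the contour integral in the statement, so $q=-2\i\lim_{k\to\infty}k(\M-\1)_{12}$ agrees with that formula.

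The remaining steps are soft. For~(ii): $\det\M$ is analytic off $\mathbb R$, tends to $1$ at infinity, and has no jump because $\det\J_M=1-|R|^2+|R|^2=1$; hence $\det\M\equiv1$ by Liouville, and $\mathbf{\Phi}^{-1}=\e^{\i kx\s_3}\M^{-1}$ is analytic wherever $\mathbf{\Phi}$ is. For~(iii): writing $\mathbf{\Phi}=\M\e^{-\i kx\s_3}$ (the sign of the exponent being fixed by the cancellation of the factors $\e^{\pm2\i kx}$ in $\J_M$), the jump relation $\M_-=\M_+\J_M$ becomes $\mathbf{\Phi}_-=\mathbf{\Phi}_+\,V(k;\gamma)$ with $V$ independent of $x$; differentiating this in $x$ shows $\mathbf{\Phi}_x$ satisfies the same jump, so $(\mathbf{\Phi}_x\mathbf{\Phi}^{-1})_-=(\mathbf{\Phi}_x\mathbf{\Phi}^{-1})_+$, and being analytic off $\mathbb R$ the function $\mathbf{\Phi}_x\mathbf{\Phi}^{-1}$ is entire. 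For~(iv): $\mathbf{\Phi}_x\mathbf{\Phi}^{-1}=\M_x\M^{-1}-\i k\,\M\s_3\M^{-1}$, and inserting $\M=\1+\M_1k^{-1}+o(k^{-1})$, $\M^{-1}=\1-\M_1k^{-1}+o(k^{-1})$ and $\M_x=O(k^{-1})$ gives $\mathbf{\Phi}_x\mathbf{\Phi}^{-1}=-\i k\s_3-\i[\M_1,\s_3]+o(1)$ as $k\to\infty$; thus the entire function $\mathbf{\Phi}_x\mathbf{\Phi}^{-1}+\i k\s_3+\i[\M_1,\s_3]$ vanishes at infinity, hence is identically $0$. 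Therefore $\mathbf{\Phi}_x+\i k\s_3\mathbf{\Phi}=\mathbf{Q}(x;\gamma)\mathbf{\Phi}$ with $\mathbf{Q}=-\i[\M_1,\s_3]$, which is off--diagonal with $(1,2)$--entry $2\i(\M_1)_{12}$ and $(2,1)$--entry $-2\i(\M_1)_{21}$. By \eqref{qu}, $q=-2\i(\M_1)_{12}=2\i(\M_1)_{21}$ (equivalently $(\M_1)_{21}=-(\M_1)_{12}$, from the $k\mapsto-k$ symmetry of Lemma~\ref{lem_sym}), so both entries equal $-q$ and $\mathbf{Q}=\begin{pmatrix}0&-q\\-q&0\end{pmatrix}$; reality of $q$ follows from the complex--conjugation symmetry of Lemma~\ref{lem_sym}.

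The only genuinely technical point is step~(i): justifying, from the singular integral equation \eqref{SIE_Z}, both the termwise $x$--differentiability of the large--$k$ expansion and the bound $\M_x=O(k^{-1})$. But the ingredients are all in place --- $L^2$--solvability of the equations obtained from \eqref{SIE_Z} by differentiating in $x$ (established in the preceding lemma) and the Schwartz--type decay of $\1-\J_M$ and its $x$--derivatives --- so this is a careful but routine ``differentiate the Cauchy integral under the sign'' argument, of the kind carried out in \cite{KM19}; everything downstream is the Liouville/algebra computation above.
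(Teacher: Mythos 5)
Your argument is exactly the one the paper has in mind — it explicitly says the proof "uses the fact that $\mathbf{\Phi}_x$ has the same jump condition as $\mathbf{\Phi}$ and then relies on the Liouville theorem applied to $\mathbf{\Phi}_x\mathbf{\Phi}^{-1}$" and suppresses the details, which you have correctly supplied, including the identification $\mathbf{Q}=-\i[\M_1,\s_3]$ and the reality of $q$ via Lemma~\ref{lem_sym}. You are also right that the gauge must be $\mathbf{\Phi}=\M\e^{-\i kx\s_3}$ (consistent with Section~\ref{sectMKdV}) for the factors $\e^{\pm2\i kx}$ in $\J_M$ to cancel; the sign in the lemma's statement is a typo.
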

\begin{proof}
The proof is standard and uses the fact that the derivative $\mathbf{\Phi}_x$ has the same jump condition as $\mathbf{\Phi},$ and then relies on the Liouville theorem applied to $\mathbf{\Phi}_x\mathbf{\Phi}^{-1},$ and we will suppress it. The realness of $q(x;\gamma)$ follows from symmetries from Lemma \ref{lem_sym}.
\end{proof}

\begin{cor}
For any fixed $\gamma\in[0,1],$ the function 
$q(x;\gamma)$ is smooth for $x\in\mathbb{R}.$
\end{cor}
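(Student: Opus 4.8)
The plan is to deduce smoothness of $q(x;\gamma)$ directly from the two lemmas immediately preceding the corollary. First I would note that by the previous lemma, for every fixed $\gamma\in[0,1]$ the solution $\M(x;k;\gamma)$ of RHP~\ref{RightRHP} is infinitely differentiable in $x$ for all $x\in\mathbb{R}$, with the derivatives obtained by solving the differentiated singular integral equations in $L^2(\mathbb{R})$ and substituting into the Cauchy-integral representation \eqref{M_int}. Differentiating \eqref{M_int} under the integral sign is legitimate because $\1-\J_M(x;s;\gamma)$ and all its $x$-derivatives decay exponentially in $s$ (since $\widehat\theta$ enters only through bounded oscillatory exponentials and $R(k;\gamma)=-\sqrt\gamma\,\e^{-k^2/4}$ is Schwartz), so $(\M(x;s;\gamma)-\1)(\J_M(x;s;\gamma)-\1)$ and its $x$-derivatives are integrable in $s$ uniformly for $x$ in compact sets.

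Next I would use the integral formula for $q$ from the same lemma, namely
\[
q(x;\gamma) = \frac{-1}{\pi}\int\limits_{\mathbb{R}}
\big([\1+\M(x;s;\gamma)][\J_M(x;s;\gamma)-\1]\big)_{12}\,\d s .
\]
Since the integrand is jointly smooth in $(x,s)$, and since $\partial_x^n$ of the integrand is dominated for $x$ in any compact interval by an $L^1(\mathbb{R}_s)$ function (again by the exponential decay of $\J_M-\1$ and its derivatives and the local boundedness of $\M$ and its $x$-derivatives coming from continuity of $(Id-\mathcal{K})^{-1}$ in the parameters), differentiation under the integral sign gives that $q(\cdot;\gamma)\in C^\infty(\mathbb{R})$ with
\[
\partial_x^n q(x;\gamma) = \frac{-1}{\pi}\int\limits_{\mathbb{R}}
\partial_x^n\big([\1+\M(x;s;\gamma)][\J_M(x;s;\gamma)-\1]\big)_{12}\,\d s .
\]
Alternatively, and perhaps more cleanly, one can invoke the ZS-type ODE $\mathbf{\Phi}_x + \i k\s_3\mathbf{\Phi} = \mathbf{Q}\mathbf{\Phi}$: since $\mathbf{\Phi}=\M\e^{\i kx\s_3}$ is smooth in $x$ and $\mathbf{Q}(x;\gamma) = \mathbf{\Phi}_x\mathbf{\Phi}^{-1} - \i k\s_3$, and $\det\mathbf{\Phi}\equiv 1$ so $\mathbf{\Phi}^{-1}$ is smooth in $x$ too, the entries of $\mathbf{Q}$ — in particular $q(x;\gamma)$ — are smooth in $x$.

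The main obstacle, such as it is, is bookkeeping the uniform-in-$x$ integrability needed to justify passing all the $x$-derivatives through the $s$-integral; this is where one must combine the exponential decay of $\J_M-\1$ (and of each $\partial_x^j\J_M$, which differentiation only multiplies by polynomial-in-$s$ factors coming from $\partial_x\widehat\theta = k$) with the fact that $\M(x;s;\gamma)$ and its $x$-derivatives are bounded locally uniformly in $x$, the latter being a consequence of the continuity of $(Id-\mathcal{K})^{-1}$ established in the existence/smoothness lemma. All of this is routine once those two lemmas are in hand, so the corollary follows immediately; I would simply write "this is immediate from the two preceding lemmas: differentiate the integral formula for $q$ in $x$, justifying the interchange by the exponential decay of $\J_M-\1$ and its $x$-derivatives together with the local boundedness of $\M$ and its $x$-derivatives."
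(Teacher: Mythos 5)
Your proposal is correct and follows exactly the route the paper intends: the corollary is stated without a separate proof precisely because it is an immediate consequence of the two preceding lemmas (smoothness of $\M$ in $x$ via the differentiated singular integral equation, plus the integral/limit formula expressing $q$ in terms of $\M$ and $\J_M-\1$), which is what you spell out. The extra bookkeeping on dominated convergence and the alternative argument via $\mathbf{Q}=\mathbf{\Phi}_x\mathbf{\Phi}^{-1}-\i k\s_3$ are both fine but not needed beyond what the lemmas already provide.
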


\begin{lemma}\label{lem_expansion}
For any $n\in\mathbb{N},$ the solution $\M(x;k;\gamma)$ of the RHP \ref{RightRHP} has the expansion as $k\to\infty$
\[\M(x;k;\gamma) = \1 +\sum_{j=1}^{n}\( \frac{1}{k^{2j-1}}\begin{pmatrix}
\i A_{2j-1} & i B_{2j-1} \\ -\i B_{2j-1} & -\i A_{2j-1}
\end{pmatrix}
+
\frac{1}{k^{2j}}\begin{pmatrix}
A_{2j} &  B_{2j} \\  B_{2j} & A_{2j}
\end{pmatrix}\)+\mathcal{O}(k^{-2n-1}),
\]
where all $A_j = A_j(x;\gamma),\ B_j = B_j(x;\gamma)$ are real.
Furthermore,
\[\begin{split}
&
q(x;\gamma) = 2 B_1(x;\gamma) \in \mathbb{R},
\\
&
\partial_x( A_{1}(x;\gamma)) = \frac{1}{2} q^2(x;\gamma) = 2 B_1(x;\gamma)^2\in\mathbb{R}.
\end{split}\]
\end{lemma}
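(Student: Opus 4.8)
The plan is to derive the asymptotic expansion of $\M$ at $k=\infty$ directly from the singular integral representation \eqref{M_int}, exploiting the two symmetries of Lemma \ref{lem_sym} to constrain the form of the coefficients. First I would expand the Cauchy kernel $\frac{1}{s-k}$ in a geometric series in $1/k$ for $|k|$ large: since $\1-\J_M(x;s;\gamma)$ decays exponentially in $s$ along $\mathbb{R}$, every moment $\int_{\mathbb{R}} s^{m}\big([\1+\Z][\1-\J_M]\big)\,\d s$ converges, so the expansion $\M = \1+\sum_{j\geq1} M_j(x;\gamma) k^{-j}+\mathcal{O}(k^{-n-1})$ is legitimate to any order, with $M_j = -\frac{1}{2\pi\i}\int_{\mathbb{R}} s^{j-1}[\1+\Z(x;s;\gamma)][\1-\J_M(x;s;\gamma)]\,\d s$.

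Next I would impose the symmetries. Writing $\s_1=\begin{pmatrix}0&1\\1&0\end{pmatrix}$, the relation $\M(x;k;\gamma)=\s_1\,\ol{\M(x;\ol k;\gamma)}\,\s_1$ forces each $M_j$ to satisfy $M_j=\s_1\ol{M_j}\s_1$, i.e. the diagonal entries of $M_j$ are complex conjugates of each other and likewise the off-diagonal entries. The relation $\M(x;k;\gamma)=\s_1\M(x;-k;\gamma)\s_1$ gives $M_j=(-1)^j\s_1 M_j\s_1$: for $j$ even the matrix commutes with $\s_1$ (so it has the form $\left(\begin{smallmatrix}a&b\\ b&a\end{smallmatrix}\right)$), and for $j$ odd it anticommutes with $\s_1$ (form $\left(\begin{smallmatrix}a&b\\ -b&-a\end{smallmatrix}\right)$). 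Combining the two constraints: for $j=2\ell$ even, $a,b$ are real (call them $A_{2\ell},B_{2\ell}$); for $j=2\ell-1$ odd, $a$ is purely imaginary and $b$ is purely imaginary, so writing $a=\i A_{2\ell-1}$, $b=\i B_{2\ell-1}$ gives exactly the stated block form with $A_{2\ell-1},B_{2\ell-1}\in\mathbb{R}$. This yields the claimed structure of the expansion.

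Then the two concrete identities follow from plugging the expansion into the already-established differential equation $\mathbf{\Phi}_x+\i k\s_3\mathbf{\Phi}=\mathbf{Q}\mathbf{\Phi}$ for $\mathbf{\Phi}=\M\e^{\i k x\s_3}$, equivalently $\M_x+\i k[\s_3,\M]=\mathbf{Q}\M$. Matching the coefficient of $k^{0}$ on both sides: the $k^1\cdot k^{-1}$ term gives $\i[\s_3,M_1]=\mathbf{Q}$; since $[\s_3,\,\cdot\,]$ acts on off-diagonal entries by multiplication by $\pm2$, and $M_1=\left(\begin{smallmatrix}\i A_1&\i B_1\\ -\i B_1&-\i A_1\end{smallmatrix}\right)$, one reads off $\mathbf{Q}=\left(\begin{smallmatrix}0&-2B_1\\ -2B_1&0\end{smallmatrix}\right)$, hence $q=2B_1$ (consistent with $q=-2\i\lim k\M_{12}=-2\i\cdot\i B_1\cdot$, wait, $=-2\i(M_1)_{12}=-2\i\cdot\i B_1=2B_1$). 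Matching the coefficient of $k^{-1}$: $\partial_x M_1+\i[\s_3,M_2]=\mathbf{Q}M_1$; taking the diagonal part kills the commutator term and gives $\partial_x(\i A_1)=(\mathbf{Q}M_1)_{11}=(-2B_1)(-\i B_1)=2\i B_1^2$, i.e. $\partial_x A_1=2B_1^2=\tfrac12 q^2$, which is the second identity.

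The routine parts are the geometric-series justification (only needs exponential decay of $\1-\J_M$, already noted in the existence proof) and the linear algebra of matching coefficients. The main obstacle, such as it is, is bookkeeping: being careful that the symmetry constraints are applied to $\Z$ as well as to $\M$ so that the integrals defining $M_j$ genuinely inherit the reality/parity structure, rather than merely asserting it for $\M$; concretely one should first check that $\Z(x;k;\gamma)$ satisfies the same two symmetries (which follows from the symmetries of $\J_M$ used in Lemma \ref{lem_sym} together with uniqueness of the solution of \eqref{SIE_Z}), and only then read off the structure of the moments. Once that is in place the induction on $n$ is immediate.
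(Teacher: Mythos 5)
Your proposal is correct and follows essentially the same route as the paper: expand $\M$ at $k=\infty$ using the integral representation \eqref{M_int} and the exponential decay of $\1-\J_M$, read off the block structure of the coefficients from the two symmetries of Lemma \ref{lem_sym}, and obtain the identities $q=2B_1$ and $\partial_x A_1=\tfrac12 q^2$ by matching the $k^0$ off-diagonal and $k^{-1}$ diagonal terms in $\M_x+\i k[\s_3,\M]=\mathbf{Q}\M$. You supply more detail than the paper does (the explicit moment formulas and the linear-algebra consequences of the symmetries), and your final caveat about transferring the symmetry through $\Z$ is harmless but unnecessary, since the symmetry of $\M$ itself determines the asymptotic coefficients uniquely.
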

\begin{proof}
The possibility to expand the function $\M(x;k;\gamma)$ for large $k$ follows from the representation \eqref{M_int} and the fact that the $\1-\J_M(x;s;\gamma)$ is exponentially small for $s$ on the infinite part of the real line.
The symmetries of the elements of the expansion follows from the symmetries in Lemma \ref{lem_sym}.
Finally,
writing an expansion
\[\M(x;k;\gamma) = \1 + \frac{\mathbf{m}_1(x;\gamma)}{k} + \frac{\mathbf{m}_2(x;\gamma)}{k^2}+\ldots,\] and substituting this into 
\[{\M}_x + \i k [{\s}_3, \M] = \mathbf{Q} \M,\]
where $[\mathbf{A},\mathbf{B}]=\mathbf{A}\mathbf{B}-\mathbf{B}\mathbf{A}$ is the matrix commutator, we obtain 
\[\frac{\mathbf{m}_{1,x}}{k} + \frac{\mathbf{m}_{2,x}}{k^2}+\ldots +
\i[\s_3,\mathbf{m}_1]+\frac{\i[\s_3,\mathbf{m}_2]}{k} 
+
\frac{\i[\s_3, \mathbf{m}_3]}{k^2}+\ldots
=
\mathbf{Q}+\frac{\mathbf{Q}\mathbf{m}_1}{k}+\frac{\mathbf{Q}\mathbf{m}_2}{k^2}+\ldots.
\]
Comparing the (off-diagonal) terms of the order $k^0,$ and diagonal terms of the order $k^{-1},$
we find that
\[
\begin{split}
q(x;\gamma) = -2\i (\mathbf{m}_1)_{12},
\qquad
\partial_x (\mathbf{m}_1)_{11} = \frac{\i}{2} q^2,
\end{split}\]
which finishes the proof.
\end{proof}

\section{Analysis for $x\to+\infty.$}
\begin{lemma}
Let $A_1(x;\gamma)$ be as in Lemma \ref{lem_expansion}. Then
\[A_1(x;\gamma) = -\frac12\int\limits_{x}^{+\infty}q^2(\tilde x;\gamma) \d \tilde x = -
2\int\limits_{x}^{+\infty}B_1(\tilde x;\gamma)^2\d \tilde x.\]

\end{lemma}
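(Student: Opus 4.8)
The plan is to reduce the identity to the single analytic fact that $A_1(x;\gamma)\to 0$ as $x\to+\infty$. By Lemma~\ref{lem_expansion} we already have $\partial_x A_1(x;\gamma)=\tfrac12 q^2(x;\gamma)=2B_1(x;\gamma)^2\ge0$, so $A_1(\cdot;\gamma)$ is nondecreasing; granting the limit, it is then $\le0$ everywhere, the improper integral $\int_x^{+\infty}q^2(\tilde x;\gamma)\,\d\tilde x$ converges and equals $\lim_{y\to+\infty}2\bigl(A_1(y;\gamma)-A_1(x;\gamma)\bigr)=-2A_1(x;\gamma)$, and integrating $\partial_x A_1=\tfrac12q^2=2B_1^2$ from $x$ to $+\infty$ produces the two stated forms. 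So the whole content is this limit.

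To reach it I would write $A_1$ as a contour integral with a visible oscillatory factor. Matching $\M=\1+\mathbf{m}_1(x;\gamma)/k+\dots$ against Lemma~\ref{lem_expansion} gives $A_1(x;\gamma)=-\i\,(\mathbf{m}_1(x;\gamma))_{11}$. The Plemelj formula applied to \eqref{M_int}, together with \eqref{SIE_Z}, identifies the boundary value of $\M$ from the upper half--plane as $\M_+=\1+C_+[(\1+\Z)(\1-\J_M)]=\1+\Z$, so $\1+\Z(x;s;\gamma)=\M_+(x;s;\gamma)$; expanding $(s-k)^{-1}$ at $k=\infty$ in \eqref{M_int} then yields $\mathbf{m}_1(x;\gamma)=-\tfrac1{2\pi\i}\int_{\mathbb R}\M_+(x;s;\gamma)\bigl(\1-\J_M(x;s;\gamma)\bigr)\,\d s$. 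At $t=0$, $\widehat\theta=sx$ and the $(1,1)$, $(2,1)$ entries of $\1-\J_M$ are $0$ and $R(s)\e^{2\i sx}$ (with $R$ real on $\mathbb R$), so
\[
A_1(x;\gamma)=\frac1{2\pi}\int_{\mathbb R}\bigl(\M(x;s;\gamma)\bigr)_{12}\,R(s)\,\e^{2\i sx}\,\d s .
\]

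Next I would deform the contour upward. The entry $\bigl(\M(x;\cdot;\gamma)\bigr)_{12}$ is analytic for $\operatorname{Im}k>0$, continuous up to $\mathbb R$, and $O(1/k)$ at infinity; $R(s)=-\sqrt\gamma\,\e^{-s^2/4}$ is entire with $|R(a+\i b)|=\sqrt\gamma\,\e^{-(a^2-b^2)/4}$; and $|\e^{2\i sx}|=\e^{-2bx}$ on $\operatorname{Im}s=b$. Since $R(s)\e^{2\i sx}$ decays Gaussian-fast in $\operatorname{Re}s$ uniformly for $0\le\operatorname{Im}s\le r$, Cauchy's theorem replaces $\mathbb R$ by $\mathbb R+\i r$ for any fixed $r>0$ and gives $|A_1(x;\gamma)|\le C_r\,\e^{-2rx}$, with $C_r=\tfrac{\sqrt\gamma\,\e^{r^2/4}}{2\pi}\bigl(\int_{\mathbb R}\e^{-a^2/4}\,\d a\bigr)\sup_{x\ge0,\ \operatorname{Im}s=r}\bigl|(\M(x;s;\gamma))_{12}\bigr|$; sliding the line up with $x$, to the saddle $\operatorname{Im}s\sim4x$ of $-s^2/4+2\i sx$, even yields super--exponential decay (consistent with the error terms in Theorem~\ref{teor1}).

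The one genuinely delicate point --- and the main obstacle --- is the finiteness of $\sup_{x\ge0,\ \operatorname{Im}s=r}|(\M(x;s;\gamma))_{12}|$, i.e. a bound on $\M$ in a strip above $\mathbb R$ that is uniform as $x\to+\infty$ (the continuity in $(x,\gamma)$ from Section~2 covers only $x$ in compacts). I would get it from \eqref{M_int} and Cauchy--Schwarz: for $k=a+\i r$,
\[
|\M(x;k;\gamma)-\1|\le\tfrac1{2\sqrt{\pi r}}\,\|\M_+(\1-\J_M)\|_{L^2(\mathbb R)}\le\tfrac1{2\sqrt{\pi r}}\bigl(1+\|(Id-\mathcal K)^{-1}\|\,\|C_+\|\,\|\1-\J_M\|_{L^\infty}\bigr)\|\1-\J_M\|_{L^2}.
\]
Every norm of $\1-\J_M$ appearing here is $x$--independent, since the $x$--dependence of $\J_M$ enters only through the unimodular factors $\e^{\pm2\i sx}$; so the estimate reduces to $\sup_{x\ge0}\|(Id-\mathcal K)^{-1}\|_{L^2\to L^2}<\infty$, which the quantitative form of Theorem~9.3 in \cite{Zhou89} supplies, the lower bound on the positive real part of $\J_M$ on $\mathbb R$ and the $L^2\cap L^\infty$ size of $\1-\J_M$ being $x$--independent (for $\gamma=1$ that real part vanishes, to second order, only at $s=0$, a benign isolated degeneracy for which the $L^2$ theory and its constants still go through, exactly as in the existence proof of Section~2). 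With $A_1(x;\gamma)\to0$ in hand, the reduction in the first paragraph completes the proof.
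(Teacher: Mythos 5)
Your reduction is the same as the paper's: everything hinges on showing $A_1(x;\gamma)\to0$ as $x\to+\infty$, after which integrating $\partial_xA_1=\tfrac12q^2=2B_1^2$ gives both stated forms. Your integral representation $A_1(x;\gamma)=\tfrac1{2\pi}\int_{\mathbb R}(\M_+)_{12}R(s)\e^{2\i sx}\,\d s$ is correct, and the contour shift to $\mathbb R+\i r$ is legitimate. The genuine gap is the step you yourself flag as delicate: the uniform-in-$x$ bound on $\sup_{\operatorname{Im}s=r}|(\M(x;s;\gamma))_{12}|$, which you reduce to $\sup_{x\ge0}\|(Id-\mathcal K)^{-1}\|_{L^2\to L^2}<\infty$. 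Theorem~9.3 of \cite{Zhou89} is an existence/uniqueness statement proved by a vanishing lemma plus Fredholm theory; it does not come with an explicit resolvent bound, and the quantitative version one can extract (via $\int m_+(\operatorname{Re}\J_M)m_+^*\,\d s$ identities) produces a constant proportional to the reciprocal of the lower bound of $\lambda_{\min}(\operatorname{Re}\J_M)=\min(1,1-|R|^2)$. For $\gamma<1$ this is $\ge 1-\gamma>0$ and your argument can be completed; for $\gamma=1$ it degenerates, since $1-|R(s;1)|^2=1-\e^{-s^2/2}$ vanishes at $s=0$. Your parenthetical that this is ``a benign isolated degeneracy for which the constants still go through'' is an assertion, not a proof --- and it is exactly this degeneracy that is responsible for all the special behaviour of the $\gamma=1$ case elsewhere in the paper, so it cannot be waved away. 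As written, the lemma is claimed for all $\gamma\in[0,1]$ and your proof does not cover $\gamma=1$.

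The fix is essentially the paper's own route, which you could graft on: instead of deforming only the scalar integral for $A_1$, deform the Riemann--Hilbert problem itself. Multiplying $\M$ by the triangular factors $\begin{pmatrix}1&0\\-R\e^{2\i kx}&1\end{pmatrix}$ in $0<\operatorname{Im}k<1$ and $\begin{pmatrix}1&-\ol{R(\ol k)}\e^{-2\i kx}&\\0&1\end{pmatrix}^{\!T}$-type factor in $-1<\operatorname{Im}k<0$ yields a problem $\P$ with jumps only on $\mathbb R\pm\i$, where the jump matrices are $\1+\mathcal O(\e^{-2x})$ in $L^1\cap L^2\cap L^\infty$ uniformly; the associated singular integral operator is then invertible by a Neumann series for large $x$, with resolvent norm bounded by $(1-C\e^{-2x})^{-1}$, for every $\gamma\in[0,1]$ with no positivity input at all. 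Since the triangular factors tend to $\1$ faster than any power of $k$ at infinity in the strips, $\lim_k k(\M-\1)=\lim_k k(\P-\1)=\mathcal O(\e^{-2x})$, which is the limit you need. Your first paragraph then finishes the proof exactly as you wrote it.
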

\begin{proof}
Let us draw two lines $L_1 = \mathbb{R} + \i, L_2 = \mathbb{R} - \i,$ with orientation as on the real line. Denote the domain between $L_1$ and $\mathbb{R}$ by $\Omega_1,$ the other domain in $\Im k>0$ by $\Omega_3,$ the domain between $L_2$ and $\mathbb{R}$ by $\Omega_2,$ and the remaining domain by $\Omega_4.$
Denote $\Sigma = L_1\cup L_2$ to be an oriented contour.

Define a function
\[
\P(x;k;\gamma) = \M(x;k;\gamma)\cdot
\begin{cases}
\begin{pmatrix}1 & 0 \\ -R(k;\gamma) \e^{2\i k x} & 1\end{pmatrix},
k\in\Omega_1,
\\
\begin{pmatrix}1 & -\ol{R(\ol k; \gamma)} \e^{-2\i k x} \\ 0 & 1\end{pmatrix},
k\in\Omega_2,
\\
\1, \mbox{elsewhere}.
\end{cases}
\]
The function $P(x;k;\gamma)$ solves the following RHP.
\begin{RHP}\label{PosxRHP}
To find a $2\times2$ matrix-valued function $\P(x,t;k;\gamma)$ that satisfies the following properties:
\begin{itemize}
\item analyticity: $\P(x;k;\gamma)$ is analytic in $k\in\(\mathbb{C}\setminus\Sigma\),$\\
and continuous up to the boundary $k\in\Sigma;$
\item jumps:
 $\P_- = \P_+ \J_{P},$ where
\[\J_P=\begin{pmatrix}1 & 0 \\ -R(k;\gamma)\e^{2\i kx} & 1 \end{pmatrix}, k\in L_1,
\qquad
\J_P = \begin{pmatrix}1 & \ol{R(\ol k;\gamma)}\cdot\e^{-2\i kx} \\ 0 & 1 \end{pmatrix},
 \ k\in L_2;\]
\item asymptotics at the infinity:
\[\P(k)\to \1\quad \mbox{ as } \quad k\to\infty.\]
\end{itemize}
\end{RHP}

For $x\to+\infty,$ the jump matrix for $\P$ is uniformly exponentially close to $\1$ everywhere on the contour $\Sigma=L_1\cup L_2,$ and hence the matrix
\[\begin{pmatrix}\i A_1(x;\gamma) & \i B_1(x;\gamma) \\ -\i B_1(x;\gamma) & -\i A_1(x;\gamma)\end{pmatrix} = \lim\limits_{k\to\i\infty}k(\M(x;k;\gamma)-I) = 
\lim\limits_{k\to\i\infty}k(\P(x;k;\gamma)-I)\]
tends to $0$ exponentially fast as $x\to+\infty.$
Then, firstly, $\int_{x}^{+\infty}B_1^2(\tilde x;\gamma)$ exists (converges), and secondly, by Lemma \ref{lem_expansion}, from 
\[\partial_x A_1(x;\gamma) = 2B_1(x;\gamma)^2\]
we get 
\[A_1(x;\gamma) = -2\int\limits_{x}^{+\infty}B_1(\tilde x;\gamma)^2\d\tilde x.\]

\end{proof}

\section{Analysis for $x\to-\infty$ and proof of (c), (d)}

\subsection{Functions $T(k;\gamma), L(k;\gamma), \delta(k;\gamma).$}\label{sect_num}
First of all, let us introduce some auxiliary functions.

\noindent 
Define an entire function $T(k;\gamma)$ by the formula
\[
T(k;\gamma) = \begin{cases}
\exp\left[\dfrac{1}{2\pi\i}\displaystyle\int\limits_{-\infty}^{+\infty}
\frac{\ln(1-|R(s;\gamma)|^2)\ \d s}{s-k}\right],\quad \Im k>0,
\\
(1-R(k;\gamma)\ol{R(\ol k;\gamma)})\cdot\exp\left[\displaystyle\frac{1}{2\pi\i}\int\limits_{-\infty}^{+\infty}
\frac{\ln(1-|R(s;\gamma)|^2)\ \d s}{s-k}\right],\quad \Im k<0.
\end{cases}\]
Furthermore, define {\it the left reflection coefficient} $L(k;\gamma)$ by the formula 
\begin{equation}\label{L}L(k;\gamma) = -\frac{\ol{R(\ol k ;\gamma)}\ T(k;\gamma)}{\ol{T(\ol k ;\gamma)}}  = \frac{-\ol {R(\ol k ;\gamma)}\ T(k;\gamma)^2}{1-R(k;\gamma)\ol{R(\ol k;\gamma)}} 
\end{equation}

We collect the properties of $T(k;\gamma), L(k;\gamma)$ in the following lemma.
\begin{lemma}
\begin{enumerate}
\item $T(k;\gamma), R(k;\gamma)$ are entire function in $k,$
$L(k;\gamma)$ is a meromorphic function with a pole at $k = \i\kappa_{\gamma}.$
\item Zeros and poles.
\begin{enumerate}\item The only zero of $T(k;\gamma)$ is a simple pole at $k=-\i\kappa_{\gamma},$
where \[\kappa_{\gamma} = \sqrt{-2\ln\gamma}\geq 0.\]
It is a simple zero in both cases $\gamma<1$ and $\gamma = 1.$

\item For $\gamma\in(0,1),$ the function $L(k;\gamma)$ has a simple pole at $k = \i \kappa_{\gamma},$ and a simple zero at $k = -\i\kappa_{\gamma}.$

For $\gamma = 1,$ the function $L(k;1)$ is an entire function, and does not vanish at $k=0.$
\end{enumerate}

\item Scattering relations:
\[
\begin{split}
&
T(k;\gamma)\, \ol{T(\ol k;\gamma)}=1-R(k;\gamma)\, \ol{R(\ol k;\gamma)} = 1-L(k;\gamma)\, \ol{L(\ol k;\gamma)},
\quad k\in\mathbb{C}.
\\
&
T(k;\gamma)\, \ol{T(\ol k;\gamma)} = 1-L(k;\gamma)\, \ol{L(\ol k;\gamma)},\quad  k\in\mathbb{C}.
\\
& 
T(k;\gamma)\ol{R(\ol k;\gamma)} + L(k;\gamma)\ol{T(\ol k;\gamma)}=0.
\end{split}
\]

\item Symmetries:
\[\ol{T(-\ol k;\gamma)} = T(k;\gamma),\qquad 
\ol{L(-\ol k;\gamma)} = L(k;\gamma),\qquad \ol{R(-\ol k;\gamma)} = R(k;\gamma).\]

\item Large $k$ asymptotics of $T(k;\gamma)$ in $\Im k\geq 0:$
\[T(k;\gamma) = 1-\frac{\i\, T_1(\gamma)}{k}+\mathcal{O}(k^{-2}),\quad k\to\infty,\Im k\geq 0,\]
where 
$T_1(\gamma) = \frac{-1}{2\pi}\int\limits_{-\infty}^{+\infty}\ln(1-\gamma\e^{-\frac{s^2}{2}})\d s
=\frac{1}{\sqrt{2\pi}}
Li_{\frac32}(\gamma)>0.$

\item Pole condition of $L(k;\gamma)$ at $k = \i\kappa_{\gamma}:$
\[
L(k;\gamma)
=-\(
\frac{\i L_{-1}(\gamma)}{k-\i\kappa_{\gamma}}
+L_0(\gamma)
+\i L_1(\gamma) (k-\i\kappa_{\gamma})
+L_2(\gamma) (k-\i\kappa_{\gamma})^2+\ldots\),\quad k\to\i\kappa_{\gamma}
\]
For $\gamma = 1,$
\begin{equation}\label{L2L1}L(k;1) = -\(1 + \i L_1(1)k+L_2(1)k^2+\mathcal{O}(k^3)\),\qquad L_2(1)=-\frac12L_1(1)^2-\frac14,
\end{equation}
and all $L_{j}(\gamma)$ are real.

Furthermore,
\[
\frac{L_{-1}(\gamma)}{2\kappa_{\gamma}}
=1 - L_1(1)\kappa_{\gamma}+\frac12 L_1(1)^2 \kappa_{\gamma}^2+\mathcal{O}(\kappa_{\gamma}^2),\quad \gamma\to1-0.
\]
\end{enumerate}
\end{lemma}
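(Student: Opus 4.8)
The plan is to read off all six items from the two half-plane formulas defining $T(k;\gamma)$, using the Sokhotski--Plemelj formula and elementary algebra on the exponent. Write $f(s)=\ln(1-|R(s;\gamma)|^2)=\ln(1-\gamma\e^{-s^2/2})$ and $C(k)=\frac{1}{2\pi\i}\int_{\mathbb R}\frac{f(s)\,\d s}{s-k}$; note that $f$ is real-valued and even, Schwartz for $\gamma<1$, and for $\gamma=1$ has only the integrable logarithmic singularity $f(s)\sim\ln(s^2/2)$ at $s=0$, so $C(k)$ is well defined and analytic for $\Im k\neq 0$ in all cases. \emph{Item 1:} $R$ is entire by \eqref{R}; for $T$, the two formulas are $\e^{C}$ in the upper half-plane and $(1-R(k)\ol{R(\ol k)})\e^{C}$ in the lower one, and by Plemelj the boundary values of $\e^{C}$ on $\mathbb R$ satisfy $\e^{C}_{+}/\e^{C}_{-}=\e^{f}=1-|R|^2$, which is exactly the boundary value of the factor $1-R(k)\ol{R(\ol k)}$ multiplying $\e^{C}$ in the lower-half-plane formula, so $T_+=T_-$ on $\mathbb R$ and $T$ is entire by Morera; then \eqref{L} exhibits $L$ as a ratio of entire functions with denominator $1-\gamma\e^{-k^2/2}$, whose only zeros are $k=\pm\i\kappa_\gamma$, giving the meromorphy statement. \emph{Items 3--4:} since $f$ is real, $\ol{C(\ol k)}=-C(k)$, so $\ol{T(\ol k)}=(1-R(k)\ol{R(\ol k)})\e^{-C(k)}$ on $\Im k>0$, and multiplying by $T(k)=\e^{C(k)}$ gives $T(k)\ol{T(\ol k)}=1-R(k)\ol{R(\ol k)}$, extended to all $k$ by analytic continuation; feeding this into \eqref{L} produces the first form $L(k)=-\ol{R(\ol k)}T(k)/\ol{T(\ol k)}$, from which $L(k)\ol{L(\ol k)}=R(k)\ol{R(\ol k)}$, $T(k)\ol{R(\ol k)}+L(k)\ol{T(\ol k)}=0$, and (by combining) $T\,\ol{T(\ol k)}=1-L\,\ol{L(\ol k)}$ are immediate; the symmetries come the same way, evenness of $f$ giving $\ol{C(-\ol k)}=C(k)$ after $s\mapsto-s$, hence $\ol{T(-\ol k)}=T(k)$, while $\ol{R(-\ol k)}=R(k)$ is direct from \eqref{R} and $\ol{L(-\ol k)}=L(k)$ then follows from \eqref{L}.

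\emph{Items 2 and 5.} Use $T(k)\ol{T(\ol k)}=1-\gamma\e^{-k^2/2}$: for $\gamma<1$ the right side has a simple zero at $k=-\i\kappa_\gamma$ while $\ol{T(\ol k)}=\ol{T(\i\kappa_\gamma)}\neq 0$ there (as $T=\e^{C}$ is zero-free in the upper half-plane), so $T$ has a simple zero at $-\i\kappa_\gamma$; for $\gamma=1$ the right side has a double zero at $0$ and $\ol{T(\ol 0)}=\ol{T(0)}$, so a zero of order $m$ of $T$ at $0$ forces a zero of order $2m$ on the left, whence $m=1$. The zero/pole claims for $L$ follow by counting in $L(k)=-\ol{R(\ol k)}T(k)^2/(1-\gamma\e^{-k^2/2})$, using $\ol{R(\ol k)}=-1$ at both $k=\pm\i\kappa_\gamma$: at $-\i\kappa_\gamma$ a double zero over a simple zero gives a simple zero of $L$; at $\i\kappa_\gamma$ (for $\gamma<1$) a nonzero numerator over a simple zero gives a simple pole; for $\gamma=1$ the double zero of $T^2$ cancels the double zero of $1-\e^{-k^2/2}$ exactly, so $L(k;1)$ is entire (its nonvanishing at $0$ is contained in item 6). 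For item 5, expand $\frac{1}{s-k}=-\frac1k\sum_{j\geq 0}(s/k)^j$ in $C(k)$: the $j=0$ term is $-\frac{1}{2\pi\i k}\int_{\mathbb R}f(s)\,\d s=-\frac{\i T_1(\gamma)}{k}$ by the definition of $T_1$ in \eqref{T1}, so $T=1+C+\mathcal O(C^2)=1-\frac{\i T_1(\gamma)}{k}+\mathcal O(k^{-2})$, and the identity $T_1(\gamma)=\frac{1}{\sqrt{2\pi}}\Li_{3/2}(\gamma)$ comes from $\ln(1-\gamma\e^{-s^2/2})=-\sum_{n\geq 1}\frac{\gamma^n}{n}\e^{-ns^2/2}$ with $\int_{\mathbb R}\e^{-ns^2/2}\,\d s=\sqrt{2\pi/n}$; positivity is plain for $\gamma\in(0,1]$.

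\emph{Item 6.} For $\gamma<1$, Taylor-expanding the three factors of $L(k)=-\ol{R(\ol k)}T(k)^2/(1-\gamma\e^{-k^2/2})$ about $k=\i\kappa_\gamma$ — using $1-\gamma\e^{-k^2/2}=\i\kappa_\gamma(k-\i\kappa_\gamma)+\mathcal O((k-\i\kappa_\gamma)^2)$ and $\ol{R(\overline{\i\kappa_\gamma})}=-1$ — gives the stated Laurent series with $L_{-1}(\gamma)=T(\i\kappa_\gamma;\gamma)^2/\kappa_\gamma$; since $\ol{T(-\ol k)}=T(k)$ forces $T(\i\kappa_\gamma;\gamma)\in\mathbb R\setminus\{0\}$ we get $L_{-1}(\gamma)>0$, writing $T(\i\kappa_\gamma;\gamma)^2$ through the upper-half-plane formula reproduces the closed form in \eqref{T1}, and reality of the $L_j(\gamma)$ follows from $\ol{L(-\ol k)}=L(k)$. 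For $\gamma=1$, the reality of $T(k;1)$ on the imaginary axis together with $T(k;1)\ol{T(\ol k;1)}=1-\e^{-k^2/2}$ forces $T(k;1)^2=-\tfrac12 k^2+\mathcal O(k^3)$ near $0$, hence $L(0;1)=-1$ and $L(k;1)=-(1+\i L_1(1)k+L_2(1)k^2+\cdots)$; expanding the scattering identity $T(k;1)\ol{T(\ol k;1)}=1-L(k;1)\ol{L(\ol k;1)}$ with $\ol{L(\ol k;1)}=L(-k;1)$, so that $1-L(k;1)L(-k;1)=-(2L_2(1)+L_1(1)^2)k^2+\mathcal O(k^3)$, and matching against $1-\e^{-k^2/2}=\tfrac12 k^2+\cdots$ yields $L_2(1)=-\tfrac12 L_1(1)^2-\tfrac14$. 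For $\gamma\to 1-0$, set $g(k;\gamma)=\frac{k-\i\kappa_\gamma}{k+\i\kappa_\gamma}L(k;\gamma)$, which is analytic and zero-free in a fixed neighborhood of $k=0$ for all $\gamma$ near $1$ (the prefactor removes the pole at $\i\kappa_\gamma$ and the compensating simple zero at $-\i\kappa_\gamma$), depends continuously on $(k,\gamma)$ there, and equals $L(k;1)$ at $\gamma=1$; then $\frac{L_{-1}(\gamma)}{2\kappa_\gamma}=-g(\i\kappa_\gamma;\gamma)$, while $g(0;\gamma)=-L(0;\gamma)=-\sqrt\gamma=-\e^{-\kappa_\gamma^2/4}$ exactly (because $T(0;\gamma)^2=1-\gamma$, which follows from $|T(0;\gamma)|^2=1-|R(0;\gamma)|^2$ and $T(0;\gamma)\in\mathbb R$). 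Taylor-expanding $g(\i\kappa_\gamma;\gamma)$ in $\kappa_\gamma$ — each power of $k$ contributing a power of $\i\kappa_\gamma$, the $\gamma$-dependence entering only through $\gamma-1=-\tfrac12\kappa_\gamma^2+\mathcal O(\kappa_\gamma^4)$, with $\partial_k g(0;1)=-\i L_1(1)$ and $\tfrac12\partial_k^2 g(0;1)=-L_2(1)$ — gives $g(\i\kappa_\gamma;\gamma)=-1+L_1(1)\kappa_\gamma+(L_2(1)+\tfrac14)\kappa_\gamma^2+\mathcal O(\kappa_\gamma^3)$, and inserting $L_2(1)+\tfrac14=-\tfrac12 L_1(1)^2$ yields $\frac{L_{-1}(\gamma)}{2\kappa_\gamma}=1-L_1(1)\kappa_\gamma+\tfrac12 L_1(1)^2\kappa_\gamma^2+\mathcal O(\kappa_\gamma^3)$.

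\emph{Main obstacle.} Items 1, 3, 4, 5 are routine once the Cauchy-integral bookkeeping is in place. The care lies in (i) the case $\gamma=1$ throughout — checking that $f$ is still admissible in the Cauchy integral despite its logarithmic singularity at $s=0$, that $T(k;1)$ genuinely has a simple zero at $0$, and that the cancellations producing an entire $L(k;1)$ are exact — and (ii) the $\gamma\to 1-0$ expansion in item 6, where one evaluates the $\gamma$-dependent quantity $T(\i\kappa_\gamma;\gamma)$ at the $\gamma$-dependent point $\i\kappa_\gamma$ and must control both dependences to second order; the exact identity $g(0;\gamma)=-\sqrt\gamma$ is the simplification that makes (ii) tractable, and this is where most of the real work sits.
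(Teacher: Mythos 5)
Your items 1--5 follow essentially the same route as the paper (Sokhotski--Plemelj continuity across $\mathbb{R}$, the algebraic scattering relations and symmetries read off from \eqref{L}, the large-$k$ expansion of the Cauchy integral), and your zero/pole counting via $T(k)\ol{T(\ol k)}=1-\gamma\e^{-k^2/2}$ is equivalent to the paper's observation about the zeros of $1-R\ol R$. Where you genuinely diverge is the degenerate case $\gamma=1$ and item 6. The paper handles both through the auxiliary function $\widehat\delta(k;\gamma;a)$ with the rational prefactor $(k+\i a)/(k+\i\kappa_\gamma)$: this makes the jump non-vanishing and jointly smooth in a strip, yields the explicit representation \eqref{Ldelta} for $L$, and from it the coefficients $c_j(\gamma;a)$, the value $\e^{c_0(1;a)}=a^2/2$, the expansion \eqref{L2L1}, and (in the subsequent remark) the leading limit $L_{-1}(\gamma)/(2\kappa_\gamma)\to1$. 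You instead regularize on the $L$-side, setting $g(k;\gamma)=\frac{k-\i\kappa_\gamma}{k+\i\kappa_\gamma}L(k;\gamma)$, use the exact evaluation $g(0;\gamma)=-L(0;\gamma)=-\sqrt\gamma$ (a nice identity not exploited in the paper), derive $L_2(1)=-\tfrac12L_1(1)^2-\tfrac14$ from the scattering identity $1-L(k)L(-k)=1-\e^{-k^2/2}$ rather than from the $c_j$'s, and obtain the $\kappa_\gamma$-expansion of $-g(\i\kappa_\gamma;\gamma)=L_{-1}(\gamma)/(2\kappa_\gamma)$ by a two-variable Taylor expansion. If completed, this is actually stronger than what the paper's proof establishes: the paper proves only the leading term rigorously, while the $\kappa_\gamma$ and $\kappa_\gamma^2$ coefficients appear in its introduction as a numerics-based conjecture.

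Two steps that you flag but do not carry out are precisely where the paper's $\widehat\delta$-device does real work, and they need to be filled in. First, for $\gamma=1$ your Plemelj/Morera argument gives analyticity of $T$ only across $\mathbb{R}\setminus\{0\}$, since $f(s)=\ln(1-\e^{-s^2/2})$ is not H\"older at $s=0$; to speak of "a zero of order $m$ of $T$ at $0$" you must first show the singularity at $k=0$ is removable (e.g.\ boundedness of $T$ near $0$ from both half-planes, using $\mathrm{Re}\,C\le0$ in the upper half-plane and the cancellation of the exponentiated logarithm against the double zero of $1-\e^{-k^2/2}$ below), or split off the explicit Cauchy transform of $\ln\frac{s^2+\kappa_\gamma^2}{s^2+a^2}$ as the paper does. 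Second, your Taylor expansion of $g(\i\kappa_\gamma;\gamma)$ uses more than the continuity you assert: you need joint regularity of $g$ in $(k,\kappa_\gamma^2)$ near $(0,0)$ — analyticity in $k$ on a $\gamma$-independent neighborhood plus (one-sided) differentiability in $\kappa_\gamma^2$ uniformly in $k$ — so that $\partial_kg(0;\gamma)=\partial_kg(0;1)+\mathcal O(\kappa_\gamma^2)$ and the mixed remainders are $\mathcal O(\kappa_\gamma^3)$; note that the raw factorization $g=-\ol{R(\ol k)}T^2(k-\i\kappa_\gamma)/\bigl((k+\i\kappa_\gamma)(1-\e^{-(k^2+\kappa_\gamma^2)/2})\bigr)$ does not display dependence through $\kappa_\gamma^2$ alone, whereas the representation \eqref{Ldelta} gives $g=\e^{-(k^2+\kappa_\gamma^2)/4}\,\varphi(k^2+\kappa_\gamma^2)\,\widehat\delta^2(k;\gamma;a)/(k+\i a)^2$ with $\varphi(w)=w/(1-\e^{-w/2})$ analytic, from which the required joint smoothness follows by differentiating the Cauchy integral of the jointly analytic jump logarithm. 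With these two insertions your argument is complete, and for item 6 it improves on the paper.
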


\begin{proof}
To prove that the function $T(k;\gamma)$ is indeed entire, it suffices to establish continuity across the real line. This follows by Sokhotsky-Plemelj formula. 
The {\it scattering relation} follows from the definition of $T(k;\gamma).$

Regarding poles and zeros,  observe first that for $\gamma\in(0,1],$ the only zeros of the function \[1-R(k)\ol{R(\ol k)}
=1-\gamma\e^{-\frac12 k^2}
=1-\e^{-\frac12 (k^2+\kappa_{\gamma}^2)}
\] are
$k=\pm\i\kappa_{\gamma}.$

This is sufficient to prove all the statements about zeros and poles for $\gamma\neq 1.$

To treat also $\gamma = 1,$ and being able to make transition for $\gamma\to1-0,$
it is useful to introduce two auxiliary functions, $\delta(k;\gamma)$ and $\widehat(k;\gamma;a).$

Namely, define 
\[\delta(k;\gamma) = \begin{cases}T(k;\gamma),\ \Im k>0,\\\ol{T(\ol k;\gamma)}^{-1}, \ \Im k<0.\end{cases}\]
The function $\delta(k;\gamma)$ solves the conjugation problem
\[
\frac{\delta_+(k;\gamma)}{\delta_-(k;\gamma)} = 1- |R(k;\gamma)|^2,\quad k\in\mathbb{R},
\]
and $\delta(k;\gamma)\to 1$ as $k\to\infty,$
Furthermore,
define 
\[\widehat\delta(k;\gamma;a) = 
\begin{cases}
\frac{k+\i a}{k + \i \kappa_{\gamma} }\delta(k;\gamma),\quad \Im k>0,
\\
\\
\frac{k - \i \kappa_{\gamma}}{k-\i a}\delta(k;\gamma),\quad \Im k<0.
\end{cases}\]
Here $a>\kappa_{\gamma}$ is an arbitrary parameter; we can keep $a = 1$ for all $\gamma\in(\frac{1}{\sqrt{\e}};1]$. The latter formula is valid also for $\gamma=1,$ when $\kappa_1 = 0.$
Function $\widehat\delta(k;\gamma)$ solves the following scalar conjugation problem:
\begin{equation}\label{dh}
\frac{\widehat\delta_+(k;\gamma)}{\widehat\delta_-(k;\gamma)} = \frac{k^2+a^2}{k^2+\kappa_{\gamma}^2}\(1- |R(k;\gamma)|^2\)
\equiv
(k^2+a^2)\frac{1-\e^{-\frac12\(k^2+\kappa_{\gamma}^2\)}}{k^2+\kappa_{\gamma}^2},\quad k\in\mathbb{R},
\end{equation}
and $\widehat\delta(k;\gamma)\to 1$ as $k\to\infty.$

The functions $\delta, \widehat\delta$ possesses the symmetries
\begin{equation}\label{delta_sym}
\quad \ol{\delta(-\ol k;\gamma)} = \delta(k;\gamma) = \dfrac{1}{\ol{\delta(\ol k;\gamma)}} = \dfrac{1}{\delta(-k;\gamma)},
\quad
\quad \ol{\widehat\delta(-\ol k;\gamma;a)} = \widehat\delta(k;\gamma;a) = \dfrac{1}{\ol{\widehat\delta(\ol k;\gamma;a)}} = \dfrac{1}{\widehat\delta(-k;\gamma;a)}.
\end{equation}
The function $\widehat\delta(k;\gamma;a)$ can be written explicitly,
\[
\widehat\delta(k;\gamma;a) = \exp\left[\dfrac{1}{2\pi\i}\displaystyle\int\limits_{\mathbb{R}}
\frac{\ln\left\{\frac{s^2+a^2}{s^2+\kappa_{\gamma}^2}\(1-\e^{-\frac12(s^2+\kappa_{\gamma}^2)}\)
 \right\}\ \d s}{s-k}\right].
\]

Denote for further usage the coefficient of square of $\widehat\delta$ for $k\to\i\kappa_{\gamma},$
\begin{equation}\label{deltaser}\begin{split}
&\widehat\delta^2(k;\gamma;a) = \exp\(c_0 + \i c_1 (k-\i\kappa_{\gamma})
+c_2 (k-\i\kappa_{\gamma})^2 + \i c_3 (k-\i\kappa_{\gamma})^3 +  \ldots\),\quad k\to\i\kappa_{\gamma},
\\
&\widehat\delta^2(k;\gamma;a) = \exp\(-c_0 + \i c_1 (k+\i\kappa_{\gamma})
-c_2 (k+\i\kappa_{\gamma})^2 + \i c_3 (k+\i\kappa_{\gamma})^3 +  \ldots\),\quad k\to-\i\kappa_{\gamma},
\end{split}\end{equation}
where 
\[\begin{split}c_j = c_j(\gamma;a) &:= \dfrac{1}{\pi\i}\displaystyle\int\limits_{\Sigma_a}
\frac{\ln\left\{\frac{s^2+a^2}{s^2+\kappa_{\gamma}^2}\(1-\e^{-\frac12(s^2+\kappa_{\gamma}^2)}\)
 \right\}\ \d s}{(s-\i \kappa_{\gamma})^{j+1}}\in\mathbb{R},\quad j\ \mbox{ is even},\ j\geq 0,
 \\
 &:=\dfrac{-1}{\pi}\displaystyle\int\limits_{\Sigma_a}
\frac{\ln\left\{\frac{s^2+a^2}{s^2+\kappa_{\gamma}^2}\(1-\e^{-\frac12(s^2+\kappa_{\gamma}^2)}\)
 \right\}\ \d s}{(s-\i \kappa_{\gamma})^{j+1}}\in\mathbb{R},\quad j\ \mbox{ is odd},\ j\geq 0, 
 \end{split}\]
In the case $\gamma = 1$ the limits in \eqref{deltaser} should be understood in the sense $k\to 0, \Im k>0$ and $k\to 0, \Im k<0,$ respectively.

Since the r.h.s. of \eqref{dh} does not vanish in the layer $|\Im k|<a,$ the logarithm in the latter integral is well-defined not only on the real line, but also in the above mentioned layer. 

Hence, when computing  $\widehat\delta(k;\gamma;a)$ numerically
at the point $\i\kappa_{\gamma}$ for $\gamma = 1$ or $\gamma$ close to 1, we can deform the contour of integration, integrating instead over the contour \\$\Sigma_{a} = (-\infty, -a/4)\cup(-\frac{a}{4}, -\i\frac{a}{4})\cup(-\i \frac{a}{4}, \frac{a}{4})\cup (\frac{a}{4},+\infty).$

Furthermore, since the r.h.s. in \eqref{dh} is uniformly continuous as $\gamma\to1-0$ and non-vanishing, the function $\widehat\delta(k;\gamma)$ is also uniformly continuous as $\gamma\to1-0.$
This means that 
\[\widehat\delta(k;\gamma;a)\to\widehat\delta(k;1;a)\quad \mbox{ as}\quad \gamma\to1-0\quad \mbox{ uniformly w.r.t. }\ k\in\mathbb{C}.\]
The function $L(k;\gamma)$ can be written with the help of function $\widehat\delta(k;\gamma;a)$ as follows:
\begin{equation}\label{Ldelta}
\begin{split}
L(k;\gamma) &= \frac{\e^{-\frac14(k^2+\kappa_{\gamma}^2)}}{1-\e^{-\frac12(k^2+\kappa_{\gamma}^2)}}\cdot
\frac{\(k+\i\kappa_{\gamma}\)^2}{\(k+\i a\)^2}
\cdot
\widehat\delta^2(k;\gamma;a),\quad \Im k>0,
\\\\
&= \e^{-\frac14(k^2+\kappa_{\gamma}^2)}
\(1-\e^{-\frac12(k^2+\kappa_{\gamma}^2)}\)
\cdot
\frac{\(k-\i a\)^2}{\(k-\i\kappa_{\gamma}\)^2}
\cdot
\widehat\delta^2(k;\gamma;a),\quad \Im k<0.
\end{split}
\end{equation}
From this representation we see that indeed, for $\gamma<1,$
 in $\Im k>0$ there is a simple pole at $k = \i \kappa_{\gamma},$ and in $\Im k<0$ there is a simple zero at $k = -\i\kappa_{\gamma}.$
 Furthermore, for $\gamma=1,$ $\kappa_{1} = 0,$ the $L(k;1)$ does not have neither zero nor pole at $k = 0.$
 
 Furthermore, expanding \eqref{Ldelta} into series for $k\to\i\kappa_{\gamma},$
 for $\gamma<1$ we obtain
 \[L(k;\gamma) =:-\(
 \frac{L_{-1}(\gamma)}{k-\i\kappa_{\gamma}}
 +L_0(\gamma)+L_1(\gamma)\kappa_{\gamma}^2+\ldots\):= \dfrac{-4\i \e^{c_0(\gamma;a)}}{(a+\kappa_{\gamma})^2\ (k-\i\kappa_{\gamma})} + \mathcal{O}(1), \quad k \to\i\kappa_{\gamma},\]
 whence 
 \begin{equation}\label{Lm1gamma}
 L_1(\gamma) = \frac{4\i \e^{c_0(\gamma;a)}}{(a+\kappa_{\gamma})^2}.
 \end{equation}
 For $\gamma = 1,$ $\kappa_1 = 0,$ both expressions in \eqref{Ldelta} must give the same series at $k\to0.$ Thus,
 \begin{equation}\label{c0}\begin{split}
 &
\e^{c_0(1;a)} =  \frac{a^2}{2},
\quad 
c_2(1;a) = -\frac14+\frac{1}{a^2},
\\
&
L(k;1) = -\(1+\i\(c_1(1;a)+\frac{2}{a}\)k
-
\( \frac14+ \frac12 \(c_1(1;a)+\frac2{a}\)^2\)k^2+\mathcal{O}(k^3)\).
\end{split} \end{equation}
 \end{proof}
 \begin{rem}
 Let us mention, that for $\gamma = 1,$ we have locally as $k\to 0$
\[\begin{split}&
T(k;1) = \frac{-\i k}{\sqrt{2}} + \mathcal{O}(k^2),\quad k\to 0.
\\
&L(k;1)=-\i \(1+\i L_1(1) k+\mathcal{O}(k^2)\), k\to 0,
\qquad \mbox{ and } \quad L_1(1)\in\mathbb{R}.
\end{split}\]
The fact that $L_1(1)$ is real follows from $|L(k;\gamma)|^2<1$ for $k\in\mathbb{R}.$
Furthermore, for $\gamma<1,$ we have $R(\i\kappa_{\gamma};\gamma) = -\i,$ and
\[L_{-1}(\gamma) = \frac{1}{\kappa_{\gamma}}T(\i\kappa_{\gamma};\gamma)^2>0.\]
Let us also mention another formula for $L_{-1}(\gamma),$ which can be derived from the previous ones,
\[
\ln\frac{L_{-1}(\gamma)}{2\kappa_{\gamma}\cdot2\kappa^2_{\gamma}}
=
\frac{1}{\pi\i}\int\limits_{-\infty}^{+\infty}
\frac{\ln\(1-\e^{-\frac12\kappa_{\gamma}^2(1+s^2)}\)}{s-\i}\d s.
\]
It follows from \eqref{Lm1gamma} and the first of the formulas \eqref{c0} that
\[\lim\limits_{\gamma\to 1-0}\frac{L_{-1}(\gamma)}{2\kappa_{\gamma}} = 
\frac{2}{a^2}\exp\left\{\frac{1}{\pi\i}\int\limits_{\Sigma_{a}}\frac{\ln\([1-\e^{-\frac{s^2}{2}}]\frac{s^2+a^2}{s^2}\)\d s}{s}\right\}=1.\]
\end{rem}

\subsection{Long $x\to-\infty$ analysis for $\gamma<1,$ and proof of (c)}
Since we are mostly interested in $\gamma$ that are close to 1, we restrict here our attention to $\gamma \in (\frac{1}{\sqrt{\e}},1)\approx(0.6065,1)$ (for $\gamma<1/\sqrt{\e}$ the analysis can be done in a more simple fashion).
For such $\gamma,$ we have $\kappa_{\gamma}<1$ and hence
the point $\i \kappa_{\gamma}$ lies in the domain $\Omega_1.$

\noindent 
Define a function
\begin{equation}\label{N}
\N(x;k;\gamma) = \M(x;k;\gamma)\cdot
\begin{cases}
T(k;\gamma)^{-\sigma_3}\begin{pmatrix}1 & \frac{\ol{R(\ol k;\gamma)}\ T(k;\gamma)^2}{1-R(k;\gamma)\ol{R(\ol k;\gamma)}}
\,\e^{-2\i k x} \\ 0 & 1\end{pmatrix},
k\in\Omega_1,
\\
\ol{T(\ol k;\gamma)}^{\,\sigma_3}\begin{pmatrix}1 & 0 \\ \frac{R(k;\gamma)\ \ol{T(\ol k;\gamma)}^{\,2}}{1-R(k;\gamma)\ol{R(\ol k;\gamma)}}\,\e^{2\i k x} & 1\end{pmatrix},
k\in\Omega_2,
\\
T(k;\gamma)^{-\sigma_3}, k \in \Omega_3,
\\
\ol{T(\ol k;\gamma)}^{\, \sigma_3}, k\in\Omega_4.
\end{cases}
\end{equation}
The function $\N(x;k;\gamma)$ solves the following RHP.
\begin{RHP}\label{NegxRHP}
To find a $2\times2$ matrix-valued function $\N(x;k;\gamma)$ that satisfies the following properties:
\begin{itemize}
\item analyticity: $\N(x,t;k;\gamma)$ is meromorphic in $k\in\(\mathbb{C}\setminus\Sigma\),$ with simple poles at $k=\pm\i\kappa_{\gamma}$,\\
and continuous up to the boundary $k\in\Sigma=L_1\cup L_2;$
\item jumps:
 $\N_-(x;k;\gamma)=\N_+(x;k;\gamma)\J_{N}(x;k;\gamma),$ where
\[\J_N=\begin{pmatrix}1 & \frac{\ol{R(\ol k;\gamma)}\ T(k;\gamma)^2}{1-R(k;\gamma)\ol{R(\ol k;\gamma)}}
\,\e^{-2\i k x} \\ 0 & 1\end{pmatrix}, k\in L_1,
\qquad
\J_N = \begin{pmatrix}1 & 0 \\ \frac{-R(k;\gamma)\ \ol{T(\ol k;\gamma)}^{\,2}}{1-R(k;\gamma)\ol{R(\ol k;\gamma)}}\,\e^{2\i k x} & 1\end{pmatrix}
 \ k\in L_2;\]
\item poles at $k=\pm\i\kappa_{\gamma}$:
function
\[
\N(x;k;\gamma)\begin{pmatrix}1 & \frac{-\ol{R(\ol k;\gamma)}\ T(k;\gamma)^2}{1-R(k;\gamma)\ol{R(\ol k;\gamma)}}
\,\e^{-2\i k x} \\ 0 & 1\end{pmatrix}
\]is regular at $k=\kappa_{\gamma},$
\\
function
\[\N(x;k;\gamma)\begin{pmatrix}1 & 0 \\ \frac{-R(k;\gamma)\ \ol{T(\ol k;\gamma)}^{\,2}}{1-R(k;\gamma)\ol{R(\ol k;\gamma)}}\,\e^{2\i k x} & 1\end{pmatrix}\]
is regular at $k=-\i\kappa;$
\item asymptotics at the infinity:
\[\N(x;k;\gamma)\to I  = \begin{pmatrix}1&0\\0&1\end{pmatrix}\quad \mbox{ as } \quad k\to\infty.\]
\end{itemize}
\end{RHP}

\begin{rem}
To see that the above RHP is indeed well-posed, we can rewrite the pole conditions as jump conditions across some circles of small radius $\varepsilon_{\gamma}<\frac{\kappa_{\gamma}}{3}$ around the points $\pm\i\kappa.$
To this end, define
\[\begin{split}
\N_{reg}(x;k;\gamma) &= N(x;k;\gamma)\begin{pmatrix}1 & \frac{-\ol{R(\ol k;\gamma)} \ T(k;\gamma)^2}{1-R(k;\gamma)\ol{R(\ol k;\gamma)}}
\,\e^{-2\i k x} \\ 0 & 1\end{pmatrix},\quad |k-\i\kappa_{\gamma}|<\varepsilon_{\gamma}, 
\\&=\N(x;k;\gamma)\begin{pmatrix}1 & 0 \\ \frac{-R(k;\gamma)\ \ol{T(\ol k;\gamma)}^{\,2}}{1-R(k;\gamma)\ol{R(\ol k;\gamma)}}\,\e^{2\i k x} & 1\end{pmatrix},
\quad |k+\i\kappa_{\gamma}|<\varepsilon_{\gamma}
\\
& =\N(x;k;\gamma),\ \mbox{elsewhere}.
\end{split}\]
Then $\N_{reg}$ solves the RHP for $\N,$ with the pole conditions being replaced by the jump conditions
\[\begin{split} & \N_{reg,-}(x;k;\gamma) = \N_{reg,+}(x;k;\gamma)\begin{pmatrix}1 & \frac{\ol{R(\ol k;\gamma)}\ T(k;\gamma)^2}{1-R(k;\gamma)\ol{R(\ol k;\gamma)}}
\,\e^{-2\i k x} \\ 0 & 1\end{pmatrix}, \quad k\in C_{\varepsilon_{\gamma}}(\i\kappa_{\gamma}),
\\
&
\N_{reg,-}(x;k;\gamma) = \N_{reg,+}(x;k;\gamma)
\begin{pmatrix}1 & 0 \\ \frac{R(k;\gamma)\ \ol{T(\ol k;\gamma)}^{\,2}}{1-R(k;\gamma)\ol{R(\ol k;\gamma)}}\,\e^{2\i k x} & 1\end{pmatrix}, \quad k\in C_{\varepsilon_{\gamma}}(-\i\kappa_{\gamma}),
\end{split}\]
where by $C_{r}(a)$ we denote the circle with the center $a$ and radius $r,$ oriented counter-close-wise, so that the positive side of the contour is inside the circle.
\end{rem}

\subsection*{Model problem $\N_{mod}(x;k;\gamma).$}
We see that the jumps for the $\N(x;k;\gamma)$ are exponentially close to $I$ as $x\to-\infty.$ This suggests that the main contribution to the asymptotics of $\N(x;k;\gamma)$ comes from the pole conditions at the points $k=\pm\i\kappa_{\gamma}.$
Introduce an anzatz
\begin{equation}\label{Nmod}
\N_{mod}(x;k;\gamma) = \begin{bmatrix}1+\frac{\i \alpha(x;\gamma)}{k+\i\kappa_{\gamma}}
&\frac{\i \beta(x;\gamma)}{k-\i\kappa_{\gamma}}
\\
\frac{-\i \beta(x;\gamma)}{k+\i\kappa_{\gamma}}
&
1-\frac{\i \alpha(x;\gamma)}{k-\i\kappa_{\gamma}}
\end{bmatrix}
\end{equation}
with real $\alpha(x;\gamma),$ $\beta(x;\gamma)$ which are to be determined from the condition that $\N_{mod}$ satisfies the pole conditions of the RHP \ref{NegxRHP}.
Then the error matrix
$\N_{err}(x;k;\gamma) = \N(x;k;\gamma)\N_{mod}(x;k;\gamma)^{-1}$ 
will be regular at the points $k = \pm\i\kappa_{\gamma}$ (the simplest way to see this is to rewrite again the pole conditions as jump conditions), and the jumps for it will be exponentially close to $I$ (smaller than $\e^{-C |x|}$ for any $C>0,$ which we can achieve by moving the contours $L_1,$ $L_2$ towards $\pm\i\infty$),  provided that $A,B$ are uniformly bounded.
Hence, we would see that indeed $\N_{mod}(x;k;\gamma)$ is close to $\N(x;k;\gamma)$ for $x\to-\infty.$

Substituting the ansatz \eqref{Nmod} into pole condition at $k=\i\kappa_{\gamma}$ of RHP \ref{NegxRHP} (just one of the condition suffices in view of symmetries), and recalling the definition \eqref{L}, \eqref{Lstar} of the {\it left reflection coefficient} $L(k;\gamma),$ we obtain the following conditions for $\alpha(x;\gamma),$ $\beta(x;\gamma)$:
\[
\begin{cases}
\(1+\dfrac{\alpha(x;\gamma)}{2\kappa_{\gamma}}\)L_{-1}(\gamma)\e^{2\kappa_{\gamma}x} - \beta(x;\gamma) = 0,
\\\\
\dfrac{\beta(x;\gamma)}{2\kappa_{\gamma}} L_{-1}(\gamma)\e^{2\kappa_{\gamma} x} - \alpha(x;\gamma) = 0,
\end{cases}
\qquad
\mbox{ whence }
\quad 
\begin{cases}
\alpha(x;\gamma) = \dfrac{2\kappa_{\gamma}\e^{4 x \kappa_{\gamma}}{L_{-1}(\gamma)}^2}{4\kappa_{\gamma}^2 - \e^{4x\kappa_{\gamma}}{L_{-1}(\gamma)}^2},
\\\\
\beta(x;\gamma) = \dfrac{4\kappa_{\gamma}^2\e^{2 x \kappa_{\gamma}}{L_{-1}(\gamma)}}{4\kappa_{\gamma}^2 - \e^{4x\kappa_{\gamma}}{L_{-1}(\gamma)}^2}.
\end{cases}
\]
We see that indeed $\alpha, \beta$ are bounded as $x\to-\infty,$ and both of them are positive. One can check that for such choice of $\alpha, \beta$, we have $\det N_{mod}(x;k;\gamma) \equiv 1.$
We have 
\[
\lim\limits_{k\to\i\infty}k(\N(x;k;\gamma)-I)=
\lim\limits_{k\to\i\infty}k(\M(x;k;\gamma)T(x;k;\gamma)^{-\sigma_3}-\1) =
\begin{pmatrix}\i A_1(x;\gamma)+\i T_1(\gamma) & \i B_1(x;\gamma) \\ -\i B_1(x;\gamma) & -\i A_1(x;\gamma)-\i T_1(\gamma)\end{pmatrix}, 
\]
and hence 
\[
A_1(x;\gamma) = -T_1(\gamma)+\alpha(x;\gamma)+\mathcal{O}(\e^{-C|x|}),
\qquad 
B_1(x;\gamma) = \beta(x;\gamma)+\mathcal{O}(\e^{-C|x|}),
\]
for any $C>0.$
This finishes proof for (c).

\subsection{Long $x\to-\infty$ analysis for $\gamma = 1,$ and proof of (d)}
Here we again define function $N(x;k;\gamma=1)$ by formula \eqref{N}.
The function $\N(x;k;1)$ solves the following RHP.
\begin{RHP}\label{NegxRHP1}
To find a $2\times2$ matrix-valued function $\N(x;k;1)$ that satisfies the following properties:
\begin{itemize}
\item analyticity: $\N(x;k;1)$ is meromorphic in $k\in\mathbb{C}\setminus{\Sigma},$ with a simple pole at $k=0$,\\
and continuous up to the boundary $k\in\Sigma=L_1\cup L_2;$
\item jumps:
 $\N_-(x;k;1)=\N_+(x;k;1)\J_{N}(x;k;1),$ where
\[\J_N=\begin{pmatrix}1 & \frac{\ol{R(\ol k;1)}\ T(k;1)^2}{1-R(k;1)\ol{R(\ol k;1)}}
\,\e^{-2\i k x} \\ 0 & 1\end{pmatrix}, k\in L_1,
\qquad
\J_N = \begin{pmatrix}1 & 0 \\ \frac{-R(k;1)\ \ol{T(\ol k;1)}^{\,2}}{1-R(k;1)\ol{R(\ol k;1)}}\,\e^{2\i k x} & 1\end{pmatrix}
 \ k\in L_2;\]
\item singularity at $k=0:$
function
\[
\N(x;k;1)\begin{pmatrix}1 & \frac{-\ol{R(\ol k;1)}\ T(k;1)^2}{1-R(k;1)\ol{R(\ol k;1)}}
\,\e^{-2\i k x} \\ 0 & 1\end{pmatrix}k^{\sigma_3}
\]is regular at $k=0,$ $\Im k \geq0,$
\\
function
\[\N(x;k;1)\begin{pmatrix}1 & 0 \\ \frac{-R(k;1)\ \ol{T(\ol k;1)}^{\,2}}{1-R(k;1)\ol{R(\ol k;1)}}\,\e^{2\i k x} & 1\end{pmatrix}k^{-\sigma_3}\]
is regular at $k=0,$ $\Im k\leq 0;$
\item asymptotics at the infinity:
\[\N(x;k;1)\to \1  = \begin{pmatrix}1&0\\0&1\end{pmatrix}\quad \mbox{ as } \quad k\to\infty.\]
\end{itemize}
\end{RHP}

\begin{rem}
To see that the above RHP is indeed well-posed, we can rewrite the pole conditions as jump conditions across a circle of a small radius $\varepsilon$ around the point $0,$ and the segment $(-\varepsilon, \varepsilon).$
To this end, define 
\[\begin{split}
\N_{reg}(x;k;1) &= N(x;k;1)\begin{pmatrix}1 & \frac{-\ol{R(\ol k;1)} \ T(k;1)^2}{1-R(k;1)\ol{R(\ol k;1)}}
\,\e^{-2\i k x} \\ 0 & 1\end{pmatrix}T(k;1)^{\sigma_3},\quad |k|<\varepsilon, \Im k > 0
,
\\&=\N(x;k;1)\begin{pmatrix}1 & 0 \\ \frac{-R(k;1)\ \ol{T(\ol k;1)}^{\,2}}{1-R(k;1)\ol{R(\ol k;1)}}\,\e^{2\i k x} & 1\end{pmatrix} \ol{T(\ol k;1)}^{\,-\sigma_3},
\quad |k|<\varepsilon, \Im k<0,
\\
& =\N(x;k;1),\ \mbox{elsewhere}.
\end{split}\]
Then $\N_{reg}$ solves the RHP for $N,$ with the singularity condition being replaced by the jump conditions
\[\begin{split} & \N_{reg,-}(x;k;\gamma) = \N_{reg,+}(x;k;\gamma)T(k;1)^{-\sigma_3}\begin{pmatrix}1 & \frac{\ol{R(\ol k;\gamma)}\ T(k;\gamma)^2}{1-R(k;\gamma)\ol{R(\ol k;\gamma)}}
\,\e^{-2\i k x} \\ 0 & 1\end{pmatrix}, \quad k\in C^+_{\varepsilon}(0),
\\
&
\N_{reg,-}(x;k;\gamma) = \N_{reg,+}(x;k;\gamma)\ol{T(\ol k;1)}^{\,\sigma_3}
\begin{pmatrix}1 & 0 \\ \frac{R(k;\gamma)\ \ol{T(\ol k;\gamma)}^{\,2}}{1-R(k;\gamma)\ol{R(\ol k;\gamma)}}\,\e^{2\i k x} & 1\end{pmatrix}, \quad k\in C^-_{\varepsilon}(0),
\\
&
\N_{reg,-}(x;k;1) = \N_{reg,+}(x;k;1)
\begin{pmatrix}1 & \ol{R(k;1)}\e^{-2\i kx} \\ -R(k;1)\e^{2\i kx} & 1 - |R(k;1)|^2\end{pmatrix}, k\in(-\varepsilon,\varepsilon),
\end{split}\]
where by $C^+_{\varepsilon}(0)$ we denote part of the oriented counter-clock-wise circle $C_{\varepsilon}(0),$ which lies in $\Im k>0,$ and similar for $C^-_{\varepsilon}.$
\end{rem}

\subsection*{Model problem $\N_{mod}(x;k;1).$}
We see that the jumps for the $\N(x;k;1)$ are exponentially close to $I$ as $x\to-\infty.$ This suggests that the main contribution to the asymptotics of $\N(x;k;1)$ comes from the singularity condition at the point $k=0.$
Introduce an anzatz
\begin{equation}\label{Nmod1}
\N_{mod}(x;k;1) = \begin{bmatrix}1+\dfrac{\i \alpha(x;1)}{k}
&\dfrac{\i\beta(x;1)}{k}
\\\\
\dfrac{-\i\beta(x;1)}{k}
&
1-\dfrac{\i \alpha(x;1)}{k}
\end{bmatrix}
\end{equation}
with real $A(x;1),$ $B(x;1)$ which are to be determined from the condition that $N_{mod}$ satisfies the singularity conditions of the RHP \ref{NegxRHP1}.
Then the error matrix 
$N_{err}(x;k;1) = N(x;k;1)N_{mod}(x;k;1)^{-1}$ 
will be regular at the points $k =0$ (the simplest way to see this is to rewrite again the pole conditions as jump conditions), and the jumps for it will be exponentially close to $\1$ (smaller than $\e^{-C |x|}$  for any $C>0,$ which we can achieve by moving the contours $L_1,$ $L_2$ towards $\pm\i\infty$), provided that $A,B$ are uniformly bounded.
Hence, we would see that indeed $\N_{mod}(x;k;1)$ is close to $\N(x;k;1)$ for $x\to-\infty.$

Substituting the ansatz \eqref{Nmod1} into singularity condition at $k=0,$ $\Im k>0$ of RHP \ref{NegxRHP} (just one of the condition suffices in view of symmetries), and recalling the definition \eqref{L}, \eqref{Lstar} of the {\it left reflection coefficient} $L(k;1),$ we obtain the following conditions for $\alpha(x;1),$ $\beta(x;1)$: 
\[
\begin{cases}
\alpha(x;1) = \beta(x;1),\\
\alpha(x;1)(L_1(1)-2x)=1
\end{cases}
\qquad
\mbox{ whence }
\quad 
\alpha(x;1)  = \beta(x;1) =\frac{1}{-2x+L_1(1)}.
\]
We see that indeed $\alpha, \beta$ are bounded as $x\to-\infty,$ and both of them are positive. One can check that for such choice of $\alpha, \beta$, we have $\det \N_{mod}(x;k;1) \equiv 1.$
We have 
\[
\lim\limits_{k\to\i\infty}k(\N(x;k;1)-I)=
\lim\limits_{k\to\i\infty}k(\M(x;k;1)T(x;k;1)^{-\sigma_3}-I) =
\begin{pmatrix}\i A_1(x;1)+\i T_1(1) & \i B_1(x;1) \\ -\i B_1(x;1) & -\i A_1(x;1)-\i T_1(1)\end{pmatrix}, 
\]
and hence 
\[
A_1(x;1) = -T_1(1)+\alpha(x;1)+\mathcal{O}(\e^{-C|x|}),
\qquad 
B_1(x;1) = \beta(x;1)+\mathcal{O}(\e^{-C|x|}),
\]
for any $C>0.$
This finishes proof for (d).

\section{Expression for $\int_{x}^{+\infty}y(z)\d z,$ and proof of (e)}\label{sectE}
Expand the solution $\M(x;k;\gamma)$ of the original RHP at $k=0,$
\[\M(x;k;\gamma) = \M_0(x;\gamma) + k \M_1(x;\gamma) + 
k^2 \M_2(x;\gamma) + k^3 \M_3(x;\gamma) + \ldots,\]
and substitute this into the differential equation for $\M:$
\[\M_x(x;k;\gamma) + \i k [\s_3, \M(x;k;\gamma)] = \mathbf{Q}(x;\gamma)\M(x;k;\gamma).\]
Comparing the elements of $k^0, k^1,\ldots,$ we obtain
\[\partial \M_0(x;\gamma) = \mathbf{Q}(x;\gamma)\M_0(x;k;\gamma),
\qquad 
\partial \M_1(x;\gamma) + [\s_3, \M_0(x;k;\gamma)] = 
\mathbf{Q}(x;\gamma)\M_1(x;k;\gamma), \ldots.\]
Let us treat the first one.
Denote, using symmetries (Lemma \eqref{lem_sym}), \[\M_0(x;\gamma) = \begin{pmatrix} r(x;\gamma) & w(x;\gamma) \\ w(x;\gamma) & r(x;\gamma)\end{pmatrix},\]
then
we obtain
\[
\begin{split}
r_x(x;\gamma) = -q(x;\gamma) w(x;\gamma),
\qquad
w_x(x;\gamma) = -q(x;\gamma) r(x;\gamma),
\end{split}
\]
and the boundary conditions are
\[\lim_{x\to+\infty}r(x;\gamma) = 1,
\qquad 
\lim_{x\to+\infty}w(x;\gamma) = 0.
\]
We obtain
\[(r+w)_x=-q(x)(r+w),\qquad (r-w)_x=q(x)(r-w)\]
whence
\[
r+w=\exp\(\int_{x}^{+\infty}q(z;\gamma)\d z\),
\qquad
r-w=\exp\(-\int_{x}^{+\infty}q(z;\gamma)\d z\),
\]
and finally
\[
r(x;\gamma)=\cosh\(\int_{x}^{+\infty}q(z;\gamma)\d z\),\qquad
w(x;\gamma)=\sinh\(\int_{x}^{+\infty}q(z;\gamma)\d z\).
\]

\noindent 
Now substitute this in the ingredients of the asymptotic analysis.
We have
\[
\P=\begin{cases}
\begin{pmatrix}\M_1-R\e^{2\i k x}\M_2, & \M_2\end{pmatrix},\quad k\in\Omega_1,
\\
\begin{pmatrix}\M_1,& \M_2-\ol{R}\e^{-2\i k x}\M_1\end{pmatrix},
\quad k\in\Omega_2,
\end{cases}
\]
and \[\P(k) = \begin{pmatrix}r(x;\gamma) & w(x;\gamma) \\
w(x;\gamma) & r(x;\gamma) \end{pmatrix}+\mathcal{O}(k),\quad k\to 0.\]
Now set $\gamma = 1.$
We have for $x\to-\infty,$
\[
\N=\begin{cases}
\begin{pmatrix}\frac{1}{T}\M_1, & T\M_2-\frac{1}{T}\M_1L\e^{-2\i k x}
\end{pmatrix},\quad k\in\Omega_1,
\\
\begin{pmatrix}\frac{1}{T}\M_1 - T\M_2\ol{L}\e^{2\i k x}, & T\M_2
\end{pmatrix},\quad k\in\Omega_2,
\end{cases}
\]
and 
\[\N(k)\sim \begin{pmatrix}1+\frac{\i \alpha}{k} & \frac{\i\beta}{k} \\\frac{-\i \beta}{k} & 1-\frac{\i \alpha}{k}\end{pmatrix}.\]
Hence,
\[\begin{cases}
(\M_1-R\e^{2\i k x}\M_2)_{[1]} \sim T(1+\frac{\i \alpha}{k}) -R\e^{2\i k x}\frac{1}{T}\(\frac{\i\beta}{k}+L\e^{-2\i k x}(1+\frac{\i \alpha}{k})\)
=r(x;1)+\mathcal{O}(k), k\to 0,
\\\\
(\M_2)_{[1]}\sim\frac{1}{T}\(\frac{\i \beta}{k}+L\e^{-2\i k x}(1+\frac{\i \alpha}{k})\)
=
w(x) + \mathcal{O}(k), \ k\to 0,
\end{cases}\]

Expanding the middle term, we see that the $k^{-2}$ term vanish because of $\alpha(x;1) = \beta(x;1),$ and the  $k^{-1}$ term vanish because
$\alpha(x;1)=\beta(x;1) = \frac{1}{-2x+L_1(1)}.$
Then, comparing the $k^0$ terms gives us 
\[
\begin{split}
&
r(x) = \frac{\(L_1(1)-2x\)^2+2L_2(1)+L_1(1)^2+1}{\sqrt{2}\(L_1(1)-2x\)},
\\
&w(x) = - \frac{\(L_1(1)-2 x\)^2+L_1(1)^2+2L_2(1)}{\sqrt{2}\(L_1(1)-2x\)},
\end{split}\]
were we denoted (\eqref{L2L1})
\[\begin{split}
T(k;1) = \frac{-\i k}{\sqrt{2}} + T_2 k^2 +T_3 k^3 +\ldots,
\quad
L(k;1) = -\(1 + \i L_1(1) k + L_2(1) k^2 +\L_3 k^3 +\ldots\).
\end{split}\]
and $r^2-w^2=1$ gives us $L_1^2(1)+2L_2(1)=-\frac12.$
Hence,
\[\begin{split}
&\exp\(\int\limits_{x}^{+\infty}q(z;1)\d z\)
=r(x)-w(x) = \sqrt{2}\(L_1(1)-2x\),
\\
&
\exp\(-\int\limits_{x}^{+\infty}q(z;1)\d z\)
=r(x) + w(x) = 
\frac{1}{\sqrt{2}(L_1(1)-2x)}.
\end{split}\]

This proves (e).

\section{Some conservative quantities.}

%Let us consider the MKdV equation
%\[y_t+6y^2y_x+y_{xxx}=0.\]
%It has the conserved quantity
%\[H_3 = \int\limits_{-\infty}^{+\infty}\(y^4(x)-(y')^2(x)\)\d x,\]
%which can be checked in view of 
%
%Another conserved quantity is 
%\[H_4 = \int\limits_{-\infty}^{+\infty}xy^2(x)\d x -t\cdot \frac32\int\limits_{-\infty}^{+\infty}(y^4-(y_x)^2)\d x.\]
%In case $y(x)=\frac{A_1}{x}+\frac{A_2}{x^2}+\ldots$ as $x\to-\infty$ this expression should be regularized,
%\[H_{4,reg} = \int\limits_{-\infty}^{x}s\(y^2(s)-\frac{A_1^2}{s^2}\)\d s
%+A_1^2\ln |x|
%+
%\int\limits_{x}^{+\infty}s y^2(s)\d s
% -t\cdot \frac32\int\limits_{-\infty}^{+\infty}(y^4-(y_s)^2)\d s.\]
%The $H_{4,reg}$ does not depend on $x$ when $x<0.$ It also does not depend on $t$ if $A_1$ is independent of $t.$
%

\begin{lemma}\label{lem_cons}
The quantities \eqref{cons} do not depend on time $t,$ and the quantity $M(1)$ does not depend on $x<0.$
%The following quantities do not depend on $t:$
%\begin{enumerate}[(a)]
%\item For $\gamma\in[0,1],$
%\[B(\gamma)=3\int\limits_{-\infty}^{+\infty}\(q^4(x,t;\gamma)+q_x^2(x,t;\gamma)\)\d x>0.\]
%\item For $0\leq \gamma<1,$ 
%\[H(\gamma) = \int\limits_{-\infty}^{+\infty}
%xq^2(x,t;\gamma)\d x + B(\gamma)t,
%\]
%\item For $\gamma = 1,$ the following does not depend on $x<0,t:$
%\[
%H(1) = \int\limits_{-\infty}^{x}
%\(sq^2(s,t;1)-\frac{1}{s}\)\d s +
%\ln |x|
%+
%\int\limits_{x}^{+\infty}
%sq^2(s,t;1)\d s+ B(1)t.
%\]
%\item
%For $\gamma = 1,$ the quantity does not depend on $x<\frac{L_1(1)}{2},$ $t:$
%\[
%\begin{split}
%&
%\int\limits_{-\infty}^x \(s-\frac{L_1(1)}{2} \) \(q(s)+\frac{2}{2s-L_1(1)}\)^2\d s
%-
%2\int\limits_{-\infty}^x\(q(s)+\frac{2}{2s-L_1(1)}\)\d s
%+
%\\
%&+
%\frac{2}{2x-L_1(1)}
%+2\ln|-2x+L_1(1)|
%+\int\limits_{x}^{+\infty}sq^2(s)\d s
%+B(1)t.
%\end{split}\]
%
%
%\item some The following quantities can also be computed in this way, but we have already a better representation for them.
%\[
%\int\limits_{-\infty}^x\(q(s;1)+\frac{1}{s}\)\d s-\ln|x|+\int\limits_{x}^{+\infty}q(s;1)\d s
%\]
%does not depend on $x,t.$ Here $x<0.$
%\item $\int\limits_{-\infty}^{+\infty}q^2(s;1)\d s.$
%\end{enumerate}
%
\end{lemma}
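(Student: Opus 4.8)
The strategy is the standard Lax-pair / trace-formula argument for the KdV and mKdV hierarchies, carried out at the level of the Riemann–Hilbert problem so that the decay hypotheses established in Theorems \ref{teor1}(c),(d) are available. First I recall that $q(x,t;\gamma)$ solves mKdV and $u=q^2-q_x$ solves KdV, and that both have the prescribed spatial decay: for $\gamma<1$ exponential decay at $\pm\infty$, and for $\gamma=1$ exponential decay at $+\infty$ together with $q(x,t;1)\sim 2/(-2x+L_1(1))$, $\int_x^{+\infty}q^2\sim 2T_1(1)+2/(2x-L_1(1))$ as $x\to-\infty$, uniformly on compact $t$-intervals (the RHP construction of Sections 3–4 produces these asymptotics with $t$ as a smooth parameter, since $\widehat\theta$ depends smoothly on $t$ and the jump is still exponentially small after the $L_1,L_2$ deformation). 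Then the conserved densities are the classical ones: from $u_t=6uu_x-u_{xxx}$ one gets $\partial_t(u)=\partial_x(3u^2-u_{xx})$ and $\partial_t(xu)=\partial_x(\text{flux})+3u^2-u_{xx}$, so that $\partial_t\!\int xu\,dx = -\int(3u^2-u_{xx})\,dx$ after the total-derivative terms are killed by decay; combined with $\partial_t H=0$ (which follows from $\partial_t(u^2)=\partial_x(\cdots)$, again using decay) this gives $\partial_t K=0$. The mKdV computation for $N(\gamma)$ is identical: $q^4+q_x^2$ is a standard mKdV density, $\partial_t(q^4+q_x^2)$ is a perfect $x$-derivative, and $\partial_t(xq^2)=\partial_x(\cdots)-(q^4+q_x^2)\cdot(\text{const})$-type identity yields $\partial_t M(\gamma)=0$ for $\gamma<1$.

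The key steps, in order: (1) write out explicitly the flux identities $\partial_t(\text{density})=\partial_x(\text{flux})$ for the densities $u$, $u^2$, $q^4+q_x^2$ using only the PDEs \eqref{MKdVKdV}; (2) for the first-moment quantities, derive $\partial_t(x\cdot\text{density})=\partial_x(x\cdot\text{flux}-\text{antiderivative of flux})+(\text{local conserved density})$, integrate in $x$, and discard boundary terms — legitimate for $\gamma<1$ because everything decays exponentially, so $K(\gamma)$, $H(\gamma)$, $N(\gamma)$, $M(\gamma)$ are all manifestly $t$-independent; (3) handle $\gamma=1$ separately, where $u(x,0;1)\sim -2/(2x-L_1+\cdots)'$ — i.e. $u$ and $xu$ are still integrable (because $\int_x^{+\infty}u=q+\int_x^{+\infty}q^2\to 2T_1(1)$ and the correction is $O(1/|x|)$, whose derivative in $x$ is $O(1/x^2)$, integrable), so the same argument applies verbatim to $H(1),K(1)$; (4) for $M(1)$, where $xq^2(x,0;1)\sim -1/x$ is \emph{not} integrable near $-\infty$, first verify that the regularized integral $\int_{-\infty}^x(zq^2-1/z)\,dz+\int_x^{+\infty}zq^2\,dz+\ln|x|$ is independent of $x<0$: differentiating in $x$ gives $xq^2(x)-1/x - xq^2(x) + 1/x=0$, using that $q^2(x,t;1)\sim 1/x^2$ (from \eqref{q1}) makes each integral convergent; then run the time-derivative argument on this regularized quantity, checking that the subtracted singular term $-1/z$ contributes a $t$-independent boundary term because the $x\to-\infty$ asymptotics of $q$ and $\int q^2$ in \eqref{q1} hold uniformly in $t$ on compacts with the \emph{same} leading coefficients (the $t$-dependence only enters at exponentially small order, since shifting $t$ shifts the phase $\widehat\theta$ but not the pole data $L_{-1},L_1$ that governs the polynomial tail).

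The main obstacle is step (4): controlling the $x\to-\infty$ behaviour of $q(x,t;1)$ and $\int_x^{+\infty}q^2(z,t;1)\,dz$ \emph{as functions of $t$}, not just at $t=0$, and showing the polynomially-decaying tail is $t$-independent to all polynomial orders (so that the boundary terms in the $\partial_t$ computation, and the compensating constants $\ln|x|+\tfrac32\ln2-1$ in \eqref{M1}, are stationary). Concretely, one must redo the $\gamma=1$ RHP analysis of Section 4.4 with $t\neq 0$: the contour deformation $L_1,L_2\to\pm i\infty$ still works because $\widehat\theta=kx+4k^3t$ has the right sign structure for $x\to-\infty$ with $t$ bounded, the model problem $\N_{mod}$ is unchanged (the singularity at $k=0$ is governed by $L(k;1)$, which is $t$-independent), and the error is $O(e^{-C|x|})$ uniformly for $t$ in compacts. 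This yields $q(x,t;1)=2/(-2x+L_1(1))+O(e^{-C|x|})$ and the matching expansion for $\int_x^{+\infty}q^2$, with $t$-independent coefficients, which is exactly what is needed to legitimise the boundary computations. Once this uniformity is in hand, the conservation statements and the $x$-independence of $M(1)$ follow by the routine integrations sketched above.
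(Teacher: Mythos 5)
Your proposal follows essentially the same route as the paper: differentiate the quantities in $t$ (resp.\ $x$ for $M(1)$) using the PDEs \eqref{MKdVKdV}, write the integrands as total $x$-derivatives plus the compensating density, and kill the boundary terms by the decay of $q$ and $u$, with the $x$-independence of $M(1)$ coming from $\partial_x M(1)=0$ exactly as you compute. The only difference is emphasis: you devote considerable care to the uniform-in-$t$ control of the $x\to-\infty$ tails (your step (4)), which the paper dispatches in a footnote by invoking the $t\neq0$ similarity asymptotics, under which $u(x,t)$ decays rapidly as $x\to-\infty$.
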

%\begin{rem}
%It is not clear whether $H(\gamma)\to H(1)$ as $\gamma\to1-0.$
%\end{rem}

\begin{proof}First of all, the integrals are convergent\footnote{For $t\neq 0$ there are asymptotic formulas similar to \eqref{asympKdV}. Because $\nu(\xi)$ is exponentially small for $x\to-\infty,$ the $q(x,t)$ is also exponentially small, and hence integrals are convergent.}. Then it is enough to differentiate w.r.t. $t$ and $x,$ using \eqref{MKdVKdV}. Let us consider for example $(\ref{cons}c)$ and $(\ref{cons}d).$

(\ref{cons}c). 
Since \[q_t-6q^2q_x+q_{xxx}=0,\]we have 
\[\partial_t M = \(x\(3q^4-2qq_{xx}+q_x^2\)+2qq_x\)\Big|_{-\infty}^{+\infty}
-3\int\limits_{-\infty}^{+\infty}(q^4+q_x^2)\d x + N = 0.\]

\eqref{M1} The integral converges. 
Since $\partial_x M(1) = 0,$
we conclude that it does not depend on $x<0.$

\[\(-4y^6-4 y^3y_{xx} +12y^2y_x^2 +2 y_x y_{xxx} -
(y_xx)^2 \)_x = 4y^3y_{t}-2y_xy_{tx}.\]

%\begin{cor}
%As $x\to-\infty,$
%\[
%\int\limits_{x}^{+\infty}sq^2(s;1)\d s = H(1)-\frac{L_1(1)}{-2x+L_1(1)}-\ln\frac{-2x+L_1(1)}{2}+\mathcal{O}(\e^{-C|x|}).
%\]
%
%\[
%\int\limits_{x}^{+\infty}(s-x)q^2(s;1)\d s = -2T_1(1)x+H(1)
%-1-\ln\frac{-2x+L_1(1)}{2}+\mathcal{O}(\e^{-C|x|}).
%\]
%
%\[F(2x;1)
%=\e^{T_1(1)x}\e^{-\frac34\ln2-\frac12 H(1)+\frac12}\(1+\mathcal{O}(\e^{-C|x|})\),\quad x\to-\infty.
%\]
%
%
%\end{cor}

%\section{Time-dependent RHP, and large $t\to+\infty$ asymptotics.}

\end{proof}

\newpage
\section{One-pager on Korteweg-de Vries equation}\label{sectKdV}
The KdV equation, $u=u(x,t),$
\[u_t(x,t)-6u(x,t)\,u_x(x,t)+u_{xxx}(x,t)=0\]
is the compatibility condition of the ordinary differential equations (Lax pair) for a function $f = f(x,t;k),$
\begin{equation}
\label{x-eqKdV}
(\ref{x-eqKdV}a):\quad -f_{xx} + u(x,t)  f = k^2 f, 
\qquad (\ref{x-eqKdV}b):\quad
f_t = 
 \(4k^2+u(x,t)\)\, f_x - \(4k+u_x(x,t)+c\) f, \nonumber
\addtocounter{equation}{1}
\end{equation}
where $c$ is an arbitrary constant.
Let $u_0(x) = u(x,0)$ be an initial function, $u_0(x)\to 0$ as $x\to\pm\infty.$ Let $f_{\pm}(x;k)$ be a solution of (\ref{x-eqKdV}a) for $t=0$ with asymptotics
\[f^+(x;k)=\e^{\i k x}(1+_\mathcal{O}(1)),\ x\to+\infty,
\qquad 
f^-(x;k)=\e^{-\i k x}(1+_\mathcal{O}(1)),\ x\to-\infty,\qquad k\in\mathbb{R}.\]
Define the spectral functions $a(k), b(k), R(k) = \frac{b(k)}{a(k)}$ by relations 
\[\begin{split}&
f^-(x;k) = a(k) \ol{f^+(x; k)} + b(k) f^+(x;k),\quad |a(k)|^2-|b|^2(k)=1,
\\
&2\i k a(k) = W(k) =  \left\{
f^-(x;k), f^+(x;k)
\right\},
\quad 
2\i k b(k) = \left\{
\ol{f^+(x;k)}, f^-(x;k)
\right\}
\end{split}
\]
where the brackets $\left\{f,g\right\} = fg_x-f_xg$ denote the Wronskian.
The $1\times 2$ vector function $V(x;k),$
\[
\begin{split}
\mathbf{V}(x;k) &= \(\frac{1}{a(k)} f_-(x;k)\e^{\i k x},\ f_+(x;k)\e^{-\i k x}\),\ \Im k>0,
\\
&= \(\ol{f_+(x;\ol k)}\e^{\i k x},\ \frac{1}{\ol{a(\ol k)}} \ol{f_-(x;\ol k)}\e^{-\i k x}\),\ \Im k<0,
\end{split}
\]
has the jump across the real line
\begin{equation}\label{JumpKdV}\mathbf{V}_-(x;k) = \mathbf{V}_+(x;k)\begin{pmatrix}
1 & \ol{R(\ol k)} \e^{-2\i k x}
\\
-R(k) \e^{2\i k x} & 1-|R(k)|^2
\end{pmatrix},\quad k\in\mathbb{R},\end{equation}
where $\mathbf{V}_{\pm}(x;k) = \mathbf{V}(x;k\pm\i 0)$ for real $k.$
Let $1\times 2$ vector-valued function $\mathbf{V}(x,t;k)$ has the jump \eqref{JumpKdV} with $2\i k x$ substituted with $2\i k x + 8\i k^3 t$ for all $t,$ and let $\mathbf{V}$ be normalized by the $(1,1)$ vector as $k\to\infty.$
Expanding $
f^{\pm}(x;k) = \e^{\pm\i k x}\(1+\frac{f^{\pm}_1}{k}+\frac{f^{\pm}_2}{k^2}+\ldots\),
$
for $k\to\infty,$ $\Im k\geq 0,$
and substituting into \eqref{x-eqKdV}, we find
\\
$
f^+_1 = \frac{\i}{2}\int\limits_{x}^{+\infty}u,\quad 
f^-_1 = \frac{\i}{2}\int\limits^{x}_{-\infty}u,
\quad
f_2^{\pm} = \frac12(f_1^{\pm})^2\pm\frac{\i}2 f_{1x}^{\pm},
\quad
a(k) = 1 - \frac{1}{2\i k}\int\limits_{-\infty}^{+\infty}u+\mathcal{O}(k^{-2}),
$
\begin{equation}\label{KdVu}
\mathbf{V}(x;k) = (1,1) + \frac{1}{2\i k }\int\limits_{x}^{+\infty}u(x)\d x
\begin{pmatrix}1,-1\end{pmatrix}+\mathcal{O}(k^{-2}),
\quad 
\mathbf{V}_{[1]}\mathbf{V}_{[2]} = 1 +\frac{u}{2k^2}+\mathcal{O}(k^{-3}),
\end{equation}
Function $u(x,t)$ obtained from $\mathbf{V}(x,t;k)$ by formulas \eqref{KdVu} is a solution of KdV for all $t.$
Furthermore,
define a singular at $k=0$ matrix
\[
\begin{split}
\M_{sing}(x;k) &= 
\frac12\begin{bmatrix}
\frac{1}{a(k)}\(f^--\frac{1}{\i k }f_x^-\)\e^{\i k x} & \(f^+-\frac{1}{\i k }f_x^+\)\e^{-\i k x}
\\
\frac{1}{a(k)}\(f^-+\frac{1}{\i k }f_x^-\)\e^{\i k x} & \(f^++\frac{1}{\i k }f_x^+\)\e^{-\i k x}
\end{bmatrix},\quad \Im k >0,
\\
&=\s_1 \ol{\M_{sing}(\ol k)}\s_1 = \s_1 \ol{\M_{sing}(-k)}\s_1 , \mbox{ where } \s_1 := \begin{pmatrix}0&1\\1&0\end{pmatrix}
\end{split}
\]
and a regular at $k=0$ matrix function
\[
\M_{reg}(x;k) = \begin{pmatrix}1 + \frac{\i \alpha(x)}{k} & \frac{\i \alpha(x)}{k} \\ \frac{-\i\alpha(x)}{k} & 1-\frac{\i\alpha(x)}{k}\end{pmatrix}\M_{sing}(x;k),\quad \mbox{ where } \alpha(x) = -\frac12\frac{f_x^+}{f^+}\Big|_{k=0}.
\]
Define 
\[q(x,t) = 2\alpha(x,t) := -\frac{\partial_x\mathbf{V}_{[2]}(x,t;k)}{\mathbf{V}_{[2]}(x,t;k)}\Big|_{k=\i 0}.\]
Then
\[u(x,t) = q^2(x,t)-q_x(x,t),\ \mbox{ and }\quad 
u_t-6uu_x+u_{xxx} = \(q_t-6q^2q_x+q_{xxx}\)_x.\]

\section{One-pager on modified Korteweg-de Vries equation}\label{sectMKdV}
The (defocusing) MKdV equation, $q=q(x,t),$
\[q_t-6q^2q_x+q_{xxx} = 0\]
is the compatibility condition of the system, $\mathbf{\Phi} = 
\mathbf{\Phi}(x,t;k),$
\begin{equation}\label{LaxMKdV}\mathbf{\Phi}_x + \i k\s_3 \mathbf{\Phi} = \mathbf{Q}(x,t)
\mathbf{\Phi},\quad 
\mathbf{\Phi}_t + 4\i k^3 \s_3 \mathbf{\Phi} = \mathbf{Q_2}(x,t;k)
\mathbf{\Phi},\end{equation}
where
\[\Q=\mathbf{Q}(x,t) = \begin{pmatrix}0&-q(x,t)\\-q(x,t) & 0\end{pmatrix},\quad 
\mathbf{Q}_2(x,t;k) = 4k^2 \Q-2\i(\Q_x+\Q^2)\s_3 k+(2\Q^3-\Q_{xx}).\]
Let $q_0(x) = q(x,0)\to 0$ as $x\to\pm\infty$ be an initial function. Let 
\[\mathbf{\Phi}^-(x,t;k)=\begin{pmatrix}\varphi^-(x;k) & \ol{\psi^-(x;\ol k)}\\
\psi^-(x,t;k) & \ol{\varphi^-(x;\ol k)}\end{pmatrix},
\quad
\mathbf{\Phi}^+(x,t;k)=\begin{pmatrix}\ol{\psi^+(x;\ol k)}
& \varphi^+(x;k)\\
\ol{\varphi^+(x;\ol k)} & \psi^+(x;k)\end{pmatrix}\]
be solutions of the first of \eqref{LaxMKdV}, normalized as
$\Phi^{\pm}(x;k)=\e^{-\i k x\sigma_3}(\1+\mathcal{O}(1)),\quad x\to\pm\infty, k\in\mathbb{R}.$ Define the transition matrix and spectral functions $a(k), b(k), R(k) = \frac{b(k)}{a(k)},$
\[T(k) = \(\Phi^+(x;k)\)^{-1}\Phi^-(x;k) =\begin{pmatrix}a(k) & \ol{b(\ol k)} \\ b(k) & \ol{a(\ol k)}\end{pmatrix},\qquad |a(k)|^2-|b|^2(k)=1.\]
The $2\times 2$ matrix-valued function
\[
\begin{split}
\M(x;k) &= \(\frac{1}{a(k)}\mathbf{\Phi}^-_{[1]}\e^{\i k x},\ \mathbf{\Phi}^+_{[2]}\e^{-\i k x}\),\quad \Im k>0,
\\
&=
\(\mathbf{\Phi}^+_{[1]}\e^{\i k x},\ \frac{1}{\ol{a(\ol k)}}\mathbf{\Phi}^-_{[2]}\e^{-\i k x}\),\quad \Im k<0,
\end{split}\]
has the jump 
\begin{equation}\label{JumpMKdV}\M_-(x;k)=\M_+(x;k)\begin{pmatrix}1 & \ol{R(k)}\e^{-2\i k x}
\\
-R(k)\e^{2\i k x} & 1-|R(k)|^2\end{pmatrix},\quad k\in\mathbb{R},\end{equation}
and symmetries $\s_1\M(k)\s_1 = \ol{\M(\ol k)} = \M(-k).$
Function $\M(x,t;k)$ which for all $t$ has the jump \eqref{JumpMKdV} with $2\i k x$ changed with $2\i k x+8\i k^3 t,$ and normalized to $\1$ as $k\to\infty,$ generates a solution $q(x,t)$ of MKdV by formulas
\begin{equation}\label{MKdVq}\begin{split}
&\M(x,t;k) = \1 + \frac{1}{2\i k}\begin{pmatrix}\int_x^{+\infty}q^2 & -q \\ q & -\int_{x}^{+\infty}q\end{pmatrix}\frac{1}{k}+\mathcal{O}(k^{-2}),\quad k\to\infty,
\\
&
\lim\limits_{k\to 0,\Im k>0}\M(x,t;k)\begin{pmatrix}1 & 0 \\ -R(k)\e^{2\i k x+8\i k^3 t} & 1\end{pmatrix}
=
\begin{pmatrix}\cosh\left[\int_x^{+\infty}q\right] & \sinh\left[\int_x^{+\infty}q\right]
\\
\sinh\left[\int_x^{+\infty}q\right]
&\cosh\left[\int_x^{+\infty}q\right]\end{pmatrix}.
\end{split}\end{equation}
If $q(x,t)$ is a solution to MKdV, then also $\widehat q(x,t) = -q(x,t)$ is a solution.The corresponding quantities 
\[\widehat{\mathbf{\Phi}^{\pm}} = \s_3\mathbf{\Phi}^{\pm}\s_3,\quad \widehat{a}(k)=a(k),\ \widehat{b}(k)=-b(k),\ \widehat{R}(k)=-R(k).\]
Functions $u(x,t) = q^2-q_x,$ $\widehat u = q^2+q_x$ are solutions to KdV, and the associated with $u$ spectral functions $a,b,R$ are the same as the ones associated with $q.$ The associated with $\widehat u$ spectral functions are the same as the ones associated with $\widehat q,$ i.e the reflection coefficient is the opposite.
If $R(0) = -1,$ then $u$ is a fast decaying solution of KdV and $\widehat u$ is a slowly decaying solution of KdV, and if $R(0)=1,$ then vice verse.

The $a_{MKdV}(k)\neq 0$ for $\Im k\geq 0,$ while $a_{KdV}$ might have simple zeros $a(\i\kappa)=0,$ for some $\kappa>0.$ In the latter case the corresponding solution $q$ of MKdV will have poles for real $x.$

\thebibliography{99}
\bibitem{BB18} Jinho Baik, Thomas Bothner. The largest real eigenvalue in the real Ginibre ensemble and its relation to the Zakharov--Shabat system, 2018, arXiv:1808.02419v3

\bibitem{DVZ} P. Deift, S. Venakides, X. Zhou. The collisionless shock region for the long-time behavior of solutions of the KdV equation. Comm. Pure Appl. Math. 47 (1994), no. 2, 199--206. 

\bibitem{GT} Katrin Grunert, Gerald Teschl. Long-time asymptotics for the Korteweg-de Vries equation via nonlinear steepest descent. Math. Phys. Anal. Geom. 12 (2009), no. 3, 287--324. 

\bibitem{HM} S.P. Hastings, J.B. McLeod. A boundary value problem associated with the second Painlevé transcendent and the Korteweg-de\thinspace Vries equation. Arch. Rational Mech. Anal. 73 (1980), no. 1, 31--51.

\bibitem{KM19} V.Kotlyarov, A.Minakov, Dispersive Shock Wave, Generalized Laguerre Polynomials and Asymptotic Solitons of the 
Focusing Nonlinear Schrödinger Equation, arXiv:1905.02493v1

\bibitem{Marchenko} V.A.Marchenko, Sturm-Liouville operators and applications, 1986.

\bibitem{RS} B. Rider, C. Sinclair, Extremal laws for the real Ginibre ensemble, The Annals of Applied Probability, 2014, Vol. 24, No.4, 1621-1651.

\bibitem{Zhou89} 
Xin Zhou. The Riemann-Hilbert problem and inverse scattering. SIAM J. Math. Anal. 20 (1989), no. 4, 966--986.

\bibitem{PZT} Mihail Poplavskyi, Roger Tribe,  Oleg Zaboronski. On the distribution of the largest real eigenvalue for the real Ginibre ensemble. Ann. Appl. Probab. 27 (2017), no. 3, 1395--1413. 

\bibitem{F} P.J. Forrester. Diffusion processes and the asymptotic bulk gap probability for the real Ginibre ensemble. Journal of Physics A: Mathematical and Theoretical, 48(32), 324001, (2015). doi:10.1088/1751-8113/48/32/324001

\end{document}